\newtheorem{thm}{Theorem}[section]
\newtheorem{cor}[thm]{Corollary}
\newtheorem{prop}[thm]{Proposition}
\newtheorem{lem}[thm]{Lemma}
\newtheorem{quest}[thm]{Question}
\theoremstyle{definition}
\newtheorem{defn}[thm]{Definition}
\newtheorem{exmp}[thm]{Example}
\newtheorem{fact}[thm]{Fact}
\theoremstyle{remark}
\newtheorem{rem}[thm]{Remark}
\newcommand{\BTP}{\mathrm{BTP}}
\newcommand{\NBTP}{\mathrm{NBTP}}
\newcommand{\TP}{\mathrm{TP}}
\newcommand{\SOP}{\mathrm{SOP}}
\newcommand{\NTP}{\mathrm{NTP}}
\newcommand{\NSOP}{\mathrm{NSOP}}
\newcommand{\ATP}{\mathrm{ATP}}
\newcommand{\NATP}{\mathrm{NATP}}
\newcommand{\CTP}{\mathrm{CTP}}
\newcommand{\NCTP}{\mathrm{NCTP}}
\newcommand{\NIP}{\mathrm{NIP}}
\newcommand{\DLO}{\mathrm{DLO}}
\newcommand{\feq}{\mathrm{feq}}
\newcommand{\K}{\mathcal{K}}
\newcommand{\D}{\mathcal{D}}
\newcommand{\E}{\mathcal{E}}
\newcommand{\Power}{\mathcal{P}}
\DeclareMathOperator{\tp}{\mathrm{tp}}
\newcommand{\monster}{\mathbb{M}}
\newcommand{\pfc}{\mathrm{pfc}}
\DeclareMathOperator{\acl}{\mathrm{acl}}
\newcommand{\RCF}{\mathrm{RCF}}
\DeclareMathOperator{\Av}{\mathrm{Av}}
\def\Ind{\setbox0=\hbox{$x$}\kern\wd0\hbox to 0pt{\hss$\mid$\hss} \lower.9\ht0\hbox to 0pt{\hss$\smile$\hss}\kern\wd0} 
\def\Notind{\setbox0=\hbox{$x$}\kern\wd0\hbox to 0pt{\mathchardef \nn=12854\hss$\nn$\kern1.4\wd0\hss}\hbox to 0pt{\hss$\mid$\hss}\lower.9\ht0 \hbox to 0pt{\hss$\smile$\hss}\kern\wd0} 
\def\ind{\mathop{\mathpalette\Ind{}}} 
\def\nind{\mathop{\mathpalette\Notind{}}} 
\title{A New Kim's Lemma}
\thanks{N.R. was supported by NSF grant DMS-2246992 and an AMS-Simons Travel Grant.}
\date{\today}
\author{Alex Kruckman and Nicholas Ramsey}
\begin{document}

\begin{abstract} 
Kim's Lemma is a key ingredient in the theory of forking independence in simple theories. It asserts that if a formula divides, then it divides along every Morley sequence in type of the parameters. Variants of Kim's Lemma have formed the core of the theories of independence in two orthogonal generalizations of simplicity\textemdash namely, the classes of $\NTP_2$ and $\NSOP_1$ theories. We introduce a new variant of Kim's Lemma that simultaneously generalizes the $\NTP_2$ and $\NSOP_1$ variants. We explore examples and non-examples in which this lemma holds, discuss implications with syntactic properties of theories, and ask several questions. 
\end{abstract}

\maketitle

\setcounter{tocdepth}{1}
\tableofcontents

\section{Introduction} 

The simple theories are a class of first-order theories which admit a structure theory built upon a good notion of independence. Non-forking independence was introduced by Shelah~\cite{shelah1990classification} in the context of classification theory for stable theories, but was later shown to be meaningful in a broad class of unstable theories. Shelah's characterization of simple theories in terms of their saturation spectra~\cite{shelah1980simple}, together with Hrushovski's work on bounded PAC structures and structures of finite $S_{1}$-rank~\cite{hrushovski1991pseudo}, and the work of Cherlin and Hrushovski on quasi-finite theories~\cite{cherlin2003finite}, all made use of a circle of ideas concerning independence and amalgamation.   These ideas were subsequently distilled and consolidated into the core results of simplicity theory by Kim and Pillay~\cite{kim1998forking,kim1997simple}, organized around the good behavior of non-forking independence in this setting. A key ingredient in this theory is a result known as \emph{Kim's Lemma}, which establishes that, in a simple theory, a formula $\varphi(x;b)$ divides over a set $A$ if and only if $\varphi(x;b)$ divides along every Morley sequence over $A$ in $\tp(b/A)$.  Kim's Lemma says that dividing is always witnessed by ``generic'' indiscernible sequences and changes the existential quantifier in the definition of dividing (``\emph{there is} an $A$-indiscernible sequence such that...'') into a universal one (``\emph{for every} Morley sequence over $A$...'').  Kim later showed that Kim's Lemma characterizes the simple theories~\cite{kim2001simplicity}.

More recent developments have highlighted the centrality of Kim's Lemma to the theory of non-forking independence and its generalizations. In particular, the theories of independence in $\NTP_{2}$ and $\NSOP_{1}$ theories\footnote{As a consequence of Mutchnik's work in~\cite{mutchnik2022nsop2}, we now know that the properties $\NSOP_1$, $\NSOP_2$, and $\NTP_1$ are equivalent at the level of theories. In this paper, we primarily refer to $\NSOP_1$ theories (rather than to $\NSOP_2$ or $\NTP_1$ theories), since the notion of Kim-independence was originally developed in~\cite{kaplan2017kim} under this hypothesis.} are based on two orthogonal generalizations of Kim's Lemma. For $\NTP_{2}$ theories, the equivalence between dividing and dividing along all generic sequences is preserved, but this equivalence requires a stronger notion of genericity.  More specifically, Chernikov and Kaplan showed in~\cite{chernikov2012forking} that, in an $\NTP_{2}$ theory, a formula $\varphi(x;b)$ divides over a model $M$ if and only if $\varphi(x;b)$ divides along Morley sequences for every strictly $M$-invariant type extending $\tp(b/M)$. This variant of Kim's Lemma was shown to characterize $\NTP_2$ theories in~\cite{ChernikovNTP2}.  

On the other hand, in $\NSOP_{1}$ theories, the equivalence between dividing and dividing along generic sequences no longer holds in general.  Nonetheless, at the generic scale, there is an analogue of Kim's Lemma: a formula $\varphi(x;b)$ divides along \emph{some} generic sequence in $\tp(b/M)$ over a model $M$ if and only if it divides along \emph{every} such sequence.  More precisely, Kaplan and the second-named author introduced \emph{Kim-dividing}, which is defined so that a formula $\varphi(x;b)$ Kim-divides over a model $M$ if $\varphi(x;b)$ divides along some Morley sequence for a global $M$-invariant type extending $\tp(b/M)$. It was shown in~\cite{kaplan2017kim} that, in an $\NSOP_{1}$ theory, $\varphi(x;b)$ Kim-divides over $M$ if and only if it divides along Morley sequences for every global $M$-invariant type extending $\tp(b/M)$ and that, moreover, this variant of Kim's Lemma characterizes $\NSOP_{1}$ theories. 

We introduce a ``New Kim's Lemma'' that simultaneously generalizes the Kim's Lemmas for $\NTP_{2}$ and $\NSOP_{1}$ theories. The starting point is an observation about the Broom Lemma of Chernikov and Kaplan~\cite{chernikov2012forking}.  This lemma is the key step in showing that, in $\NTP_{2}$ theories, types over models always have global strict invariant extensions, which generate the generic sequences needed to get a Kim's Lemma for $\NTP_{2}$ theories. However, an inspection of the proof shows that this fact really bundles together two separate statements.  The first is that in $\NTP_{2}$ theories, Kim-dividing and forking independence coincide over models.  The second is that, in any theory whatsoever, types over models extend to global \emph{Kim-strict} invariant types, where Kim-strictness relaxes the non-forking independence condition required for strictness to one that only requires non-Kim-forking. See Theorem~\ref{thm:kimstrict} below. 

The statement of New Kim's Lemma, then, suggests itself (see Definition~\ref{def:nkl} below):  a formula $\varphi(x;b)$ Kim-divides over a model $M$ if and only if it divides along Morley sequences for every Kim-strictly $M$-invariant type extending $\tp(b/M)$. This variant of Kim's Lemma coincides with the Chernikov--Kaplan Kim's Lemma in $\NTP_{2}$ theories (since there, Kim-forking agrees with forking over models, and hence Kim-strict invariant types are strict invariant), and coincides with the Kaplan--Ramsey Kim's Lemma in $\NSOP_1$ theories (since there, Kim-independence is symmetric, so invariant types are automatically Kim-strict).

In Section~\ref{sec:diversity}, we survey the Kim's Lemmas of the past and introduce our New Kim's Lemma. We also observe that New Kim's Lemma implies that Kim-forking equals Kim-dividing at the level of formulas. In Section~\ref{sec:examples}, we show that our variant of Kim's lemma holds in some examples of interest, including parametrized dense linear orders and the two-sorted theory of an infinite dimensional vector space over a real closed field with a bilinear form which is alternating and non-degenerate or symmetric and positive-definite. Our choice of examples is motivated by the idea that structures obtained by ``generically putting together'' $\NTP_2$ and $\NSOP_1$ behavior should satisfy New Kim's Lemma. We show, however, that New Kim's Lemma does \emph{not} hold in the generic triangle-free graph, suggesting that it could serve as a meaningful dividing line among theories.

In Section~\ref{sec:syntax}, we try to relate New Kim's Lemma to syntactic properties of formulas. Our approach here reverses the usual order of explanation in neostability theory, which typically begins with a syntactic property (e.g., the tree property, $\SOP_{1}$, $\TP_{2}$) and then tries to establish a structure theory for theories without this property.  In contrast, we are starting with a structural feature and trying to find a way of characterizing it syntactically. We introduce a new combinatorial configuration, which we provisionally call the Bizarre Tree Property $(\BTP)$. The class of $\NBTP$ theories (those without $\BTP$) contains $\NTP_2$ and $\NSOP_1$, and all $\NBTP$ theories satisfy New Kim's Lemma. However, we do not obtain an exact characterization. 

The Antichain Tree Property $(\ATP)$, which was introduced in~\cite{ahn2021sop1} and developed in~\cite{ATP}, is another combinatorial configuration generalizing $\TP_2$ and $\SOP_1$. We observe that $\NBTP$ implies $\NATP$.  But it is not clear whether there is an implication in either direction between $\NATP$ and New Kim's Lemma, or whether $\NBTP$ and $\NATP$ are equivalent. Here is a diagram summarizing this state of affairs.
\[\xymatrix{
& \NTP_2\ar[dr]  & &  \\
\text{simple} \ar[ur] \ar[dr]  & &  \NBTP \ar[r] \ar[dr] & \NATP \ar@{.>}@/_1pc/|-{\text{?}}[l]\ar@{.>}@/^1pc/|-{\text{?}}[d] \\
& \NSOP_1\ar[ur]  & & \text{New Kim's Lemma} \ar@{.>}@/^1pc/|-{\text{?}}[ul]  \ar@{.>}@/^1pc/|-{\text{?}}[u]
}\]

While this paper was in preparation, two closely related preprints appeared. 
\begin{itemize}
\item In~\cite{kim2023remarks}, Kim and Lee establish a different variant of Kim's Lemma for $\NATP$ theories. Similarly to our work here, they do not prove that this Kim's Lemma characterizes $\NATP$. In the context of $\NATP$, they also study dividing along coheir sequences which are Kim-strict in the sense of this paper. 
\item In~\cite{hanson2023biinvariant}, Hanson studies a number of variants of Kim's Lemma which are related to ours. In particular, he succeeds in characterizing the class of $\NCTP$ theories by means of a variant of Kim's Lemma. Here $\CTP$ is the \emph{comb tree property} (which was introduced by Mutchnik in~\cite{mutchnik2022nsop2} under the name $\omega$-$\mathrm{DCTP}_2$). The class of $\NCTP$ theories contains the $\NBTP$ theories and its contained in the $\NATP$ theories.
\end{itemize}
At the moment, the $\NATP$ theories are the class beyond $\NSOP_{1}$ and $\NTP_{2}$ with the most developed syntactic theory; it would be very satisfying if these three approaches coincide. We conclude in Section~\ref{sec:questions} with several questions on where the theory might go from here. 

\section{Preliminaries}\label{sec:preliminaries}

Throughout, $T$ is a complete $L$-theory and $\monster\models T$ is a monster model. As usual, all tuples come from $\monster$, all sets are small subsets of $\monster$, and all models are small elementary submodels of $\monster$. 

When $\alpha$ is an ordinal, we view the set $\alpha^{<\omega}$ of all finite sequences from $\alpha$ as a tree, with the tree partial order denoted by $\trianglelefteq$. The root of the tree is the empty sequence $\langle\rangle$. For $\rho\in \alpha^\omega$ and $i< \omega$, $\rho|i\in \alpha^{<\omega}$ is the restriction of $\rho$ to $i$. We write $\eta^{\frown}\nu $ for concatenation of sequences. We write $\eta\perp\nu$ when $\eta$ and $\nu$ are incomparable in the tree order.  An \emph{antichain} is a set of pairwise incomparable elements. 

\subsection{Tree properties}

We will begin by recalling the definitions of a number of tree properties and the known implications between them.  The following three tree properties were introduced by Shelah under different names\footnote{$\TP$, $\TP_{1}$, and $\TP_{2}$ were first introduced under the rather cumbersome labels $\kappa_{\mathrm{cdt}}(T) = \infty$, $\kappa_{\mathrm{sct}}(T) = \infty$, and $\kappa_{\mathrm{inp}}(T) = \infty$, respectively.} in \cite{shelah1990classification} as part of his analysis of forking in stable theories.  He introduced the `tree property' terminology in \cite{shelah1980simple} and Kim subsequently dubbed the latter two as $\TP_{1}$ and $\TP_{2}$ in \cite{kim2001simplicity}.  

\begin{defn}
Let $\varphi(x;y)$ be a formula. 
\begin{enumerate}
\item We say $\varphi(x;y)$ has the \emph{tree property} ($\TP$) if there is $k < \omega$ and a tree of tuples $(a_{\eta})_{\eta \in \omega^{<\omega}}$ satisfying the following conditions:
\begin{enumerate}
\item For all $\rho \in \omega^{\omega}$, $\{\varphi(x;a_{\rho | i }) : i < \omega\}$ is consistent.
\item For all $\eta \in \omega^{<\omega}$, $\{\varphi(x;a_{\eta^{\frown}\langle j \rangle }) : j < \omega\}$ is $k$-inconsistent. 
\end{enumerate}
\item We say $\varphi(x;y)$ has the \emph{tree property of the first kind} ($\TP_{1})$ if there is a tree of tuples $(a_{\eta})_{\eta \in \omega^{<\omega}}$ satisfying the following conditions:
\begin{enumerate}
\item For all $\rho \in \omega^{\omega}$, $\{\varphi(x;a_{\rho | i }) : i < \omega\}$ is consistent.
\item For all $\eta, \nu \in \omega^{<\omega}$, if $\eta \perp \nu$, then $\{\varphi(x;a_{\eta}), \varphi(x;a_{\nu})\}$ is inconsistent. 
\end{enumerate}
\item We say $\varphi(x;y)$ has the \emph{tree property of the second kind} ($\TP_{2}$) if there is $k < \omega$ and an array $(a_{i,j})_{i, j < \omega}$ satisfying the following conditions:
\begin{enumerate}
\item For all $f : \omega \to \omega$, $\{\varphi(x;a_{i,f(i)}) : i < \omega\}$ is consistent. 
\item For all $i < \omega$, $\{\varphi(x;a_{i,j}) : j < \omega\}$ is $k$-inconsistent. 
\end{enumerate} 
\item We say $T$ is $\NTP$ ($\NTP_{1}$, $\NTP_{2}$) if no formula has $\TP$ ($\TP_{1}$, $\TP_{2}$, respectively) modulo $T$. An $\NTP$ theory is also called a \emph{simple theory}.  
\end{enumerate}
\end{defn}

The next property was introduced by D\v{z}amonja and Shelah in~\cite{dvzamonja2004maximality}.

\begin{defn}
\cite[Definition 2.2]{dvzamonja2004maximality} We say $\varphi(x;y)$
has the $1$-\emph{strong order property} ($\SOP_{1}$) if there
is a tree of tuples $(a_{\eta})_{\eta\in2^{<\omega}}$ satisfying the following conditions:
\begin{itemize}
\item For all $\rho \in2^{\omega}$, the set of formulas $\{\varphi\left(x;a_{\rho | i}\right): i<\omega\}$
is consistent. 
\item For all $\nu,\eta\in2^{<\omega}$, if $\nu^\frown\langle0\rangle\trianglelefteq\eta$
then $\left\{ \varphi\left(x;a_{\eta}\right),\varphi\left(x;a_{\nu^{\frown}\langle1\rangle}\right)\right\} $
is inconsistent. 
\end{itemize} 
$T$ is $\NSOP_{1}$ if no formula has $\SOP_{1}$ modulo $T$. 
\end{defn}

Our last property was introduced much more recently by Ahn and Kim in~\cite{ahn2021sop1}.

\begin{defn}\cite[Definition 4.1]{ahn2021sop1}
We say $\varphi(x;y)$ has the \emph{antichain tree property} ($\ATP$) if there is a tree of tuples $(a_{\eta})_{\eta \in 2^{<\omega}}$ satisfying the following conditions:
\begin{enumerate}
\item If $X \subseteq 2^{<\omega}$ is an antichain, then $\{\varphi(x;a_{\eta}) : \eta \in X\}$ is consistent.
\item If $\eta \trianglelefteq \nu \in 2^{<\omega}$, then $\{\varphi(x;a_{\eta}), \varphi(x;a_{\nu})\}$ is inconsistent. 
\end{enumerate}
$T$ is $\NATP$ if no formula has $\ATP$ modulo $T$. 
\end{defn}

\begin{fact} Here is a summary of the known implications, which are depicted in the diagram below. 
\begin{enumerate}
\item The simple theories are the intersection of the $\NTP_{1}$ and $\NTP_{2}$ theories, i.e., $T$ is simple if no formula has $\TP_{1}$ or $\TP_{2}$ modulo $T$. (\cite[Theorem III.7.11]{shelah1990classification}.)
\item A theory $T$ is $\NSOP_{1}$ if and only if it is $\NTP_{1}$. (\cite[Theorem 1.6]{mutchnik2022nsop2}\footnote{The theorem as stated in \cite{mutchnik2022nsop2} says that every $\NSOP_2$ theory is $\NSOP_1$. Prior to the appearance of this result, it was well-known that $\NSOP_1$ implies $\NSOP_2$ and $\NSOP_2$ is equivalent to $\NTP_1$. See, e.g., \cite{KIM2011698}.}.) 
\item The $\NATP$ theories (properly) contain both the $\NTP_{1}$ and $\NTP_{2}$ theories. (\cite[Propositions 4.4 and 4.6]{ahn2021sop1}.)
\end{enumerate}
\end{fact}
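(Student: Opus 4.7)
Since the Fact compiles three separate results from the literature, my proof proposal describes the strategy used in each cited source. For (1), Shelah's theorem that the simple theories are precisely $\NTP_1 \cap \NTP_2$, the plan is to show that any formula $\varphi(x;y)$ with $\TP$ admits either a $\TP_1$ or a $\TP_2$ witness. Starting from a tree $(a_\eta)_{\eta \in \omega^{<\omega}}$ witnessing $\TP$, I would pass to a strongly indiscernible subtree via a tree-Ramsey argument and then case-split according to how the $k$-inconsistency among sibling sets is distributed: if inconsistency persists on wide antichains of mutually incomparable nodes, further refinement yields a $\TP_1$ witness; otherwise it localizes to sibling arrays, producing $\TP_2$.

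For (2), Mutchnik's theorem $\NSOP_1 \Leftrightarrow \NTP_1$, the direction $\NSOP_1 \Rightarrow \NSOP_2 \Leftrightarrow \NTP_1$ is standard combinatorics (noting that $\SOP_1$ configurations immediately exhibit the incomparability-inconsistency needed for $\SOP_2$, and $\SOP_2 \Leftrightarrow \TP_1$ was known prior to \cite{mutchnik2022nsop2}), so the substantive content is producing an $\SOP_1$ witness from a $\TP_1$ (equivalently $\SOP_2$) tree. Following \cite{mutchnik2022nsop2}, one would use global invariant or coheir extensions of types over a model to realize branches of the $\TP_1$ tree in a controlled way, then extract an $\SOP_1$-shaped configuration by a careful choice of parameters. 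The main obstacle is the technical use of the Kim-forking machinery of \cite{kaplan2017kim} adapted over hyperimaginary bases, which is required to manage the asymmetric inconsistency defining $\SOP_1$.

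For (3), the plan is to show that any formula with $\ATP$ also has $\TP_2$ and $\SOP_1$; the latter implies $\TP_1$ by (2). For $\TP_2$ from $\ATP$, given a tree $(a_\eta)_{\eta \in 2^{<\omega}}$ witnessing $\ATP$, set $b_{i,j} = a_{1^i 0^{j+1}}$: for fixed $i$, the column $\{1^i 0^{j+1} : j < \omega\}$ is a chain in $2^{<\omega}$, so the corresponding set of formulas is $2$-inconsistent by $\ATP$, while for any $f : \omega \to \omega$ the set $\{1^i 0^{f(i)+1} : i < \omega\}$ is an antichain (for $i < i'$ these strings diverge at position $i$, where the first has value $0$ and the second $1$), so the corresponding formulas are consistent by $\ATP$. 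For $\SOP_1$ from $\ATP$, one constructs a reindexing $\psi : 2^{<\omega} \to 2^{<\omega}$ sending each branch to an antichain (to recover branch consistency) and each $\SOP_1$-edge pair $(\eta, \nu^{\frown}\langle 1 \rangle)$ with $\nu^{\frown}\langle 0 \rangle \trianglelefteq \eta$ to a comparable pair (to recover inconsistency); this is the main technical step and is carried out in \cite{ahn2021sop1}. Properness of the containments is witnessed by explicit examples, e.g., as in \cite{ATP}.
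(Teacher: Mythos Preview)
The paper does not prove this Fact at all: it is stated as a summary of results from the literature, with citations to \cite{shelah1990classification}, \cite{mutchnik2022nsop2}, and \cite{ahn2021sop1}, and is followed immediately by the diagram with no further argument. So there is no proof in the paper to compare your proposal against.

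Your proposal goes well beyond what the paper does, attempting to sketch the arguments behind the cited results. As outlines these are broadly reasonable. Your $\TP_2$-from-$\ATP$ construction via $b_{i,j} = a_{1^i 0^{j+1}}$ is correct and clean. One small correction: in (2) you write ``$\SOP_1$ configurations immediately exhibit the incomparability-inconsistency needed for $\SOP_2$,'' but the implication $\NSOP_1 \Rightarrow \NSOP_2$ goes the other way at the level of configurations (an $\SOP_2$ witness gives an $\SOP_1$ witness, not vice versa); the easy direction is that $\SOP_2 \Rightarrow \SOP_1$, hence $\NSOP_1 \Rightarrow \NSOP_2$. Also, your description of Mutchnik's argument is too vague to be called a proof sketch, but since the paper itself only cites the result, this is not a deficiency relative to the paper.
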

\[\xymatrix{
& \NTP_2 \ar[dr] & \\
\NTP = \text{simple} \ar[ur] \ar[dr]& & \NATP\\
& \NTP_1 = \NSOP_1\ar[ur] & 
}\]

\subsection{Forking and dividing} \label{sec:dividing}
In this section, we introduce a number of refinements of Shelah's notions of forking and dividing, based on the idea that, when a formula divides, it can be useful to study which indiscernible sequences it divides along. 

\begin{defn}
Suppose $\varphi(x;b)$ is a formula, $C$ is a set, and $I = (b_{i})_{i < \omega}$ is a $C$-indiscernible sequence in $\tp(b/C)$ (meaning that $b_i$ realizes $\tp(b/C)$ for all $i< \omega$). We say that $\varphi$ \emph{divides along} $I$ (over $C$) if $\{\varphi(x;b_{i}) : i < \omega\}$ is inconsistent.
\end{defn}

\begin{defn} Suppose $\varphi(x;b)$ is a formula and $C$ is a set.
\begin{enumerate}
\item We say $\varphi(x;b)$ \emph{divides} over $C$ if it divides along some  $C$-indiscernible sequence in $\tp(b/C)$.
\item We say $\varphi(x;b)$ \emph{forks} over $C$ if there are formulas $(\psi_i(x;c_i))_{i<n}$ with $n< \omega$ such that $\varphi(x;b) \models \bigvee_{i < n} \psi_{i}(x;c_{i})$ and each $\psi_{i}(x;c_{i})$ divides over $C$. 
\item The notation $a \ind^{d}_{C} b$ means that $\mathrm{tp}(a/Cb)$ contains no formula that divides over $C$ and, similarly, $a \ind^{f}_{A} b$  means that $\mathrm{tp}(a/Cb)$ contains no formula that forks over $A$. 
\end{enumerate}
\end{defn}

We will be primarily concerned with extremely ``generic'' sequences, i.e., Morley sequences for global invariant types. 

\begin{defn}
A \emph{global partial type} $\pi(x)$ is a consistent set of formulas over $\monster$. 
A \emph{global type} is a global partial type which is complete. 
For a set $C$, we say the global partial type $\pi(x)$ is \emph{$C$-invariant} if, for all formulas $\varphi(x;y)$, if $b \equiv_{C} b'$, then $\varphi(x;b) \in \pi$ if and only if $\varphi(x;b') \in \pi$. 
\end{defn}

An important class of examples of global $C$-invariant types are the types that are finitely satisfiable in $C$. In any theory $T$, if $M \models T$, every type over $M$ has a global extension which is finitely satisfiable in $M$ (and therefore $M$-invariant). See Remark~\ref{rem:coheir} below.  

Over a general set $C$, there may be no global $C$-invariant types whatsoever. For this reason, when we want to work with invariant types (such as in the definition of Kim-dividing below), we usually work over a model.

\begin{defn}
Suppose $M \models T$.
\begin{enumerate}
\item We write $a \ind^{i}_{M} b$ if $\mathrm{tp}(a/Mb)$ extends to a global $M$-invariant type. 
\item We write $a \ind^{u}_{M} b$ if $\mathrm{tp}(a/Mb)$ extends to a global type finitely satisfiable in $M$.
\end{enumerate}
\end{defn}

\begin{rem}\label{rem:coheir}
The $u$ superscript comes from ``ultrafilter'', since global $M$-finitely satisfiable types all arise from the following construction:  if $p(x) \in S_{x}(M)$, then $\{\varphi(M) : \varphi(x) \in p\} \subseteq \Power(M^{x})$ generates a filter on $M^{x}$.  If $\D$ is an ultrafilter on $M^{x}$ extending this filter, then
\[
\Av(\D, \monster) = \{\varphi(x) \in L(\monster) : \varphi(M) \in \D\}
\]
is a global type extending $p$ which is finitely satisfiable in $M$. We write $\Av(\D,B)$ for $\Av(\D, \monster)$ restricted to parameters coming from $B$. 
\end{rem}

\begin{defn}
If $q$ is a global $C$-invariant type, then a \emph{Morley sequence}  over $C$ for $q$ is a sequence $(a_{i})_{i < \omega}$ such that $a_{i} \models q|_{Ca_{<i}}$ for all $i < \omega$. 
\end{defn}

\begin{fact}
By invariance, every Morley sequence over $C$ for $q$ is $C$-indiscernible. Furthermore, for a fixed global $C$-invariant type $q$ extending $\tp(b/C)$, if $\varphi(x;b)$ divides along some Morley sequence over $C$ for $q$, then it divides along every Morley sequence over $C$ for $q$.
\end{fact}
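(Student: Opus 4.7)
The plan is to prove both assertions via a single inductive claim: any two Morley sequences for $q$ over $C$ are $C$-conjugate (and hence have the same type over $C$) as sequences of the same length. Indiscernibility will follow because any increasing subsequence of a Morley sequence for $q$ over $C$ is itself a Morley sequence for $q$ over $C$. The dividing statement will follow because consistency of $\{\varphi(x;b_i) : i < \omega\}$ depends only on $\tp((b_i)_{i<\omega}/C)$.

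The inductive claim is that any two length-$n$ initial segments of Morley sequences for $q$ over $C$ are conjugate by an automorphism in $\Aut(\monster/C)$. The base case $n=1$ holds because both realize $q|_C$. For the inductive step, suppose $\sigma \in \Aut(\monster/C)$ carries $(a_0,\ldots,a_{n-1})$ to $(a'_0,\ldots,a'_{n-1})$. The key use of $C$-invariance is that $\sigma(q|_{Ca_{<n}}) = q|_{Ca'_{<n}}$: for $c \in C$, a formula $\varphi(x;c,a_{<n})$ lies in $q$ if and only if $\varphi(x;c,a'_{<n})$ does, since $a_{<n} \equiv_C a'_{<n}$ and $q$ is $C$-invariant. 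Therefore $\sigma(a_n)$ and $a'_n$ both realize $q|_{Ca'_{<n}}$, so a further $Ca'_{<n}$-automorphism composed with $\sigma$ yields a $C$-automorphism sending $(a_0,\ldots,a_n)$ to $(a'_0,\ldots,a'_n)$.

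Indiscernibility of a single Morley sequence $(a_i)_{i < \omega}$ is then immediate: for any $i_1 < \cdots < i_n$, the tuple $(a_{i_1},\ldots,a_{i_n})$ is a length-$n$ initial segment of a Morley sequence for $q$ over $C$ (because $a_{i_k} \models q|_{Ca_{<i_k}}$ restricts to $a_{i_k} \models q|_{Ca_{i_1}\cdots a_{i_{k-1}}}$), so all increasing $n$-tuples from the sequence share a common type over $C$. For the dividing statement, let $(b_i)_{i<\omega}$ and $(b'_i)_{i<\omega}$ be Morley sequences for $q$ over $C$. If $\{\varphi(x;b_i) : i < \omega\}$ is inconsistent, compactness yields finite $i_1 < \cdots < i_n$ with $\{\varphi(x;b_{i_1}),\ldots,\varphi(x;b_{i_n})\}$ inconsistent, and the inductive claim supplies a $C$-automorphism carrying $(b_{i_1},\ldots,b_{i_n})$ to $(b'_{i_1},\ldots,b'_{i_n})$, transporting the inconsistency.

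There is no real obstacle here; this is the standard justification for Morley sequences of invariant types, and the definition of $C$-invariance was chosen precisely so that these arguments work. The only bookkeeping point is verifying that $\sigma$ sends $q|_{Ca_{<n}}$ to $q|_{Ca'_{<n}}$, which is immediate from $C$-invariance applied to $a_{<n}\equiv_C a'_{<n}$.
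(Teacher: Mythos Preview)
Your proof is correct and is the standard argument. The paper states this as a Fact without proof, treating it as well-known background, so there is no proof in the paper to compare against; your write-up is exactly the justification one would supply if asked.
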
 

\begin{defn} Suppose $\varphi(x;b)$ is a formula and $M$ is a model.
\begin{enumerate}
\item We say $\varphi(x;b)$ \emph{Kim-divides} over $M$ if it divides along a Morley sequence over $M$ for some global $M$-invariant type extending $\tp(b/M)$. 
\item We say $\varphi(x;b)$ \emph{Kim-forks} over $M$ if there are formulas $(\psi_i(x;c_i))_{i<n}$ with $n< \omega$ such that $\varphi(x;b) \models \bigvee_{i < n} \psi_{i}(x;c_{i})$ and each $\psi_{i}(x;c_{i})$ Kim-divides over $M$. 
\item The notation $a \ind^{Kd}_{M} b$ means that $\mathrm{tp}(a/Mb)$ contains no formula that Kim-divides over $M$ and, similarly, $a \ind^{K}_{M} b$  means that $\mathrm{tp}(a/Mb)$ contains no formula that Kim-forks over $M$. 
\end{enumerate}
\end{defn}

Kim-independence was introduced by Kaplan and the second-named author in~\cite{kaplan2017kim}, in the context of $\NSOP_1$ theories. They showed that if $T$ is $\NSOP_1$, then Kim-forking is equivalent to Kim-dividing, and $\ind^K$ satisfies many of the good properties of $\ind^f$ in simple theories. The definition of Kim-dividing was inspired by an earlier suggestion of Kim for studying independence in $\NTP_1$ theories \cite{KimNTP1}. 

\begin{rem}\label{rem:Kdparameters}
In a general theory, Kim-dividing as we have defined it is not always preserved under adding dummy parameters. That is, suppose $\varphi(x;y)$ is a formula, and write $\widehat{\varphi}(x;y,z)$ for the same formula consider in a larger variable context by appending unused variables $z$. It is possible that there are tuples $b$ and $c$ such that $\varphi(x;b)$ Kim-divides over $M$ but $\widehat{\varphi}(x;b,c)$ does not Kim-divide over $M$. The reason is that $\ind^i$ does not satisfy left-extension in general. More explicitly, if $q(y)$ is a global $M$-invariant type extending $\tp(b/M)$ (and witnessing the Kim-dividing of $\varphi(x;b)$), there may be no global $M$-invariant type $r(y,z)$ extending both $q(y)$ and $\tp(bc/M)$. Hanson has produced an explicit example of this behavior, see~\cite[Appendix C]{hanson2023biinvariant}.

As a result, we have to be careful about parameters when working with Kim-dividing in arbitrary theories. For example, if $\varphi(x;b)$ Kim-forks, we cannot assume in general that the witnessing Kim-dividing formulas $(\psi_i(x;c_i))_{i<n}$ have the same tuple of parameters. This will cause us some trouble in Section~\ref{sec:broom} below. 

All this suggests to us that our definition of Kim-dividing may not be the ``right'' one outside of the context of $\NSOP_1$ theories. If $T$ is $\NSOP_1$, then a formula Kim-divides over a model $M$ if and only if it Kim-divides along a coheir sequence over $M$ (a Morley sequence for a global type finitely satisfiable in $M$). And if Kim-dividing were defined as dividing along a coheir sequence, then the issue with dummy parameters would not arise, since $\ind^u$ always satisfies left-extension. However, focusing only on coheir sequences seems potentially too restrictive, and the definition of Kim-dividing in terms of invariant Morley sequences is well-established, so we retain it for this paper. 
\end{rem}

The diagram below depicts the implications between the notions of independence defined in this section. 
\[\xymatrix{
a\ind^u_M b \ar[r] & a \ind^i_M b \ar[r] & a\ind^f_M b \ar[r]\ar[d] & a\ind^K_M b\ar[d]\\
&& a\ind^d_M b\ar[r] & a\ind^{Kd}_M b
}\]

\begin{fact}[\cite{chernikov2012forking}, or \cite{adler2014kim}]\label{fact:ntp2}
In $\NTP_2$ theories, a formula $\varphi(x;b)$ divides over a model $M$ if and only if it Kim-divides over $M$. Further, forking and dividing coincide over models. So when $T$ is $\NTP_2$, $\ind^f_M = \ind^d_M = \ind^K_M = \ind^{Kd}_M$.
\end{fact}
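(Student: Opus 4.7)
The plan is to derive both assertions from two results of Chernikov--Kaplan~\cite{chernikov2012forking} recalled in the introduction: first, the $\NTP_2$ Kim's Lemma, stating that if $\varphi(x;b)$ divides over $M$, then $\varphi$ divides along every Morley sequence over $M$ for every strictly $M$-invariant extension of $\tp(b/M)$; and second, the fact (proved via the Broom Lemma) that in $\NTP_2$ theories, every type over a model $M$ extends to a global strictly $M$-invariant type. I would use existence to produce a rich supply of invariant Morley sequences, and the Kim's Lemma to convert ordinary dividing into dividing along them.

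For dividing $=$ Kim-dividing over $M$, one direction is free in any theory: any Morley sequence for a global $M$-invariant extension of $\tp(b/M)$ is $M$-indiscernible in $\tp(b/M)$, so Kim-dividing implies dividing. For the converse, given $\varphi(x;b)$ dividing over $M$, I would choose a global strictly $M$-invariant extension $q$ of $\tp(b/M)$ and apply the $\NTP_2$ Kim's Lemma to conclude that $\varphi$ divides along a Morley sequence for $q$; since $q$ is in particular $M$-invariant, $\varphi(x;b)$ Kim-divides.

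For forking $=$ dividing over $M$, the nontrivial direction is forking $\Rightarrow$ dividing. Given $\varphi(x;b) \models \bigvee_{i<n}\psi_i(x;c_i)$ with each $\psi_i(x;c_i)$ dividing over $M$, I would set $\bar c = (c_0,\ldots,c_{n-1})$, pick a global strictly $M$-invariant extension $q$ of $\tp(b\bar c/M)$, and let $(b^j,\bar c^j)_{j<\omega}$ be a Morley sequence for $q$ over $M$. Each coordinate sequence $(c_i^j)_{j<\omega}$ is then a Morley sequence for the pushforward of $q$, itself a strictly $M$-invariant extension of $\tp(c_i/M)$, so the $\NTP_2$ Kim's Lemma applied to $\psi_i(x;c_i)$ gives some $k_i<\omega$ making $\{\psi_i(x;c_i^j):j<\omega\}$ be $k_i$-inconsistent. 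Suppose for contradiction that $a$ realized $\{\varphi(x;b^j):j<\omega\}$; at each $j$ there would be some $i(j)<n$ with $a\models\psi_{i(j)}(x;c_{i(j)}^j)$, and infinite pigeonhole would produce a fixed $i_0<n$ occurring infinitely often, contradicting $k_{i_0}$-inconsistency. So $\{\varphi(x;b^j):j<\omega\}$ is inconsistent, and since $(b^j)_{j<\omega}$ is $M$-indiscernible in $\tp(b/M)$, this shows $\varphi(x;b)$ divides over $M$.

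The main obstacle is really packed into the two inputs: the Broom Lemma (a delicate induction specific to $\NTP_2$) and the $\NTP_2$ Kim's Lemma (an array argument using $\NTP_2$). Given those, the rest is short, and the infinite pigeonhole in the forking $\Rightarrow$ dividing step is exactly what absorbs the disjunction in the definition of forking into a single dividing witness.
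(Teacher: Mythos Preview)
The paper does not give its own proof of this statement: it is recorded as a Fact with citations to~\cite{chernikov2012forking} and~\cite{adler2014kim}. Your derivation from the two Chernikov--Kaplan inputs (existence of global strictly $M$-invariant extensions over models in $\NTP_2$, and the $\NTP_2$ Kim's Lemma in the form ``dividing implies universally strictly dividing'') is correct and is essentially the standard route. One point you might make explicit is that the restriction of a strictly $M$-invariant global type to a subtuple of variables is again strictly $M$-invariant: given $c'$ realizing the restriction over $MB$, extend to a full realization of $q|_{MB}$ and apply monotonicity of $\ind^f$. You use this when applying the Kim's Lemma to each $\psi_i(x;c_i)$.
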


It is a fact that simple theories are characterized by symmetry of $\ind^f$ \cite[Theorem 2.4]{kim2001simplicity}. So in a simple theory, if $p$ is a global $M$-invariant type and $a$ realizes $p|_{MB}$, then $B\ind^f_M a$ (since $a\ind^i_M B$ implies $a\ind^f_M B$ and $\ind^f$ is symmetric). Outside of the simple context, it can be useful to consider invariant types which always satisfy this instance of symmetry. These ``strict'' invariant types play an important role in Chernikov and Kaplan's analysis of forking in $\NTP_2$ theories~\cite{chernikov2012forking}. 

Similarly, $\NSOP_1$ theories are characterized by symmetry of $\ind^K$, so it makes sense in our context to consider ``Kim-strict'' invariant types, which are the analogue of strict invariant types for Kim-forking. 

\begin{defn}
Suppose $p \in S(\monster)$ is a global $M$-invariant type. 
\begin{enumerate}
\item We say $p$ is a \emph{strict invariant type} over $M$ when, for any set $B$, if $a \models p|_{MB}$, then $B \ind^{f}_{M} a$.  
\item We say $p$ is a \emph{Kim-strict invariant type} over $M$ when, for any set $B$, if $a \models p|_{MB}$, then $B \ind^{K}_{M} a$.  
\item A formula $\varphi(x;b)$ \emph{strictly divides} over $M$ if it divides along a Morley sequence for some global strictly $M$-invariant type extending $\tp(b/M)$. 
\item A formula $\varphi(x;b)$ \emph{Kim-strictly divides} over $M$ if it divides along a Morley sequence for some global Kim-strictly $M$-invariant type extending $\tp(b/M)$. 
\end{enumerate}
\end{defn}

Finally, for each of the variants of dividing defined above, we can also consider changing the quantifier from dividing along \emph{some} to dividing along \emph{every} indiscernible sequence of the appropriate kind. 

\begin{defn}
We say a formula $\varphi(x;b)$ \emph{universally Kim-divides}\footnote{Universal Kim-dividing is called ``strong Kim-dividing'' in~\cite{localchar} and ``Conant-dividing'' in~\cite{mutchnik2022conantindependence}.} over $M$ if it divides along Morley sequences for \emph{every} global $M$-invariant type extending $\tp(b/M)$. Similarly, we say $\varphi(x;b)$ \emph{universally strictly divides} over $M$ if it divides along Morley sequences for \emph{every} global strict $M$-invariant type extending $\tp(b/M)$, and we say $\varphi(x;b)$ \emph{universally Kim-strictly divides} over $M$ if it divides along Morley sequences for \emph{every} global Kim-strict $M$-invariant type extending $\tp(b/M)$. 
\end{defn}

\begin{rem}
For completeness, we could say a formula $\varphi(x;b)$ \emph{universally divides} over $C$ if it divides along \emph{every} $C$-indiscernible sequence in $\tp(b/C)$. Note, however, that since the constant sequence with $b_i = b$ for all $i$ is $C$-indiscernible, a universally dividing formula is inconsistent.  
\end{rem}

\subsection{The Broom Lemma}\label{sec:broom}

It is clear that universal Kim-dividing implies Kim-dividing, since every type over a model $M$ extends to a global $M$-invariant type (see Remark~\ref{rem:coheir}). However, it is not so clear that universal (Kim-)strict dividing implies (Kim-)strict dividing. 

In~\cite{chernikov2012forking}, Chernikov and Kaplan proved that in an $\NTP_2$ theory, every type over a model $M$ extends to a global strictly $M$-invariant type, using a device they called the Broom Lemma. It turns out that their argument applies to all theories, if we replace strict invariance with Kim-strict invariance. 

A key step in the Chernikov--Kaplan argument is that forking implies quasi-dividing in the sense of the following definition. 

\begin{defn}
A formula $\varphi(x;b)$ \emph{quasi-divides} over $M$ if the conjunction of finitely many conjugates of $\varphi(x;b)$ over $M$ is inconsistent. That is, if there exist $(b_i)_{i<k}$ with $k< \omega$ and $b_i\equiv_M b$ for all $i<k$ such that $\bigwedge_{i<k}\varphi(x;b_i)$ is inconsistent. 
\end{defn}

\begin{rem}
We could say that $\varphi(x;b)$ \emph{quasi-forks} over $M$ if there are formulas $(\psi_i(x;c_i))_{i<n}$ with $n< \omega$ such that $\varphi(x;b) \models \bigvee_{i < n} \psi_{i}(x;c_{i})$ and each $\psi_{i}(x;c_{i})$ quasi-divides over $M$. It is worth noting that $a\ind^i_M b$ if and only if $\tp(a/Mb)$ contains no formula which quasi-forks. But we will not make use of this fact.
\end{rem}

The original Broom Lemma argument from~\cite{chernikov2012forking} does not appear to generalize directly to our context. But in~\cite{adler2014kim}, Adler used a variant of the Broom Lemma, which he called the Vacuum Cleaner Lemma, to give a simplified proof of some of the Chernikov--Kaplan results on $\NTP_2$ theories. Adler's proof (\cite{adler2014kim}, Lemma 3) goes through verbatim to prove the following result, in the context of an arbitrary theory $T$. 

\begin{lem}[Vacuum Cleaner for Kim-dividing\footnote{A similar modified Broom Lemma played a key role in Mutchnik's proof of the equivalence of $\NSOP_1$ and $\NSOP_2$ in~\cite{mutchnik2022nsop2}.}] \label{vacuum cleaner}
Let $\pi(x)$ be an $M$-invariant partial type and suppose 
\[
\pi(x) \models \psi(x;b) \vee \bigvee_{i < n} \varphi_{i}(x;c),
\]
where $b \ind^{i}_{M} c$ and each $\varphi_{i}(x;c)$ Kim-divides over $M$.  Then $\pi(x) \models \psi(x;b)$.  
\end{lem}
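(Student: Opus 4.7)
We proceed by induction on $n$. The base case $n=0$ is immediate, as the hypothesis directly reads $\pi(x) \models \psi(x;b)$.

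For the inductive step ($n \geq 1$), the plan is to peel off $\varphi_{n-1}(x;c)$ using its Kim-dividing. First, fix a global $M$-invariant type $r$ extending $\tp(c/M)$ and $k < \omega$ such that Morley sequences for $r$ over $M$ make $\{\varphi_{n-1}(x;c_j)\}_{j<\omega}$ $k$-inconsistent; and, using $b \ind^i_M c$, fix a global $M$-invariant type $q$ extending $\tp(b/Mc)$. Then build an $M$-indiscernible sequence $(b_j, c_j)_{j<\omega}$ with $(b_0, c_0) = (b, c)$, $(b_j, c_j) \equiv_M (b, c)$ for every $j$, and $(c_j)_{j<\omega}$ a Morley sequence for $r$ over $M$, by interleaving realizations of $r$ with appropriate $M$-conjugates of $q$.

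By $M$-invariance of $\pi$ applied to $(b_j, c_j) \equiv_M (b, c)$, for each $j$ we have $\pi(x) \models \psi(x;b_j) \vee \bigvee_{i<n}\varphi_i(x;c_j)$. Choosing any $k$ indices $j_0 < \cdots < j_{k-1}$ and exploiting the $k$-inconsistency of $\{\varphi_{n-1}(x;c_{j_l})\}_{l<k}$, we deduce
$$\pi(x) \models \bigvee_{l<k}\psi(x;b_{j_l}) \vee \bigvee_{l<k,\, i<n-1}\varphi_i(x;c_{j_l}).$$
Rewrite this right-hand side as $\Psi(x;B) \vee \bigvee_{i<n-1}\Phi_i(x;C)$, where $B = (b_{j_l})_l$, $C = (c_{j_l})_l$, $\Psi(x;B) := \bigvee_l \psi(x;b_{j_l})$, and $\Phi_i(x;C) := \bigvee_l \varphi_i(x;c_{j_l})$. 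To apply the inductive hypothesis one must check that $B \ind^i_M C$ (this follows from the interleaved construction via the $M$-invariant types $q$ and $r$) and that each $\Phi_i(x;C)$ Kim-divides over $M$ (using that the $c_{j_l}$'s sit inside a Morley sequence for an $M$-invariant type). The inductive hypothesis then yields $\pi(x) \models \Psi(x;B)$, and a compactness argument using $M$-indiscernibility of $(b_j)$ together with $M$-invariance of $\pi$ concludes $\pi(x) \models \psi(x;b)$.

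The main obstacle is the verification that the hypotheses survive the reorganization at the inductive step. In general, Kim-dividing of each $\varphi_i(x;c)$ can be witnessed by a different invariant type, and the failure of left-extension for $\ind^i$ recorded in Remark~\ref{rem:Kdparameters} means one cannot blindly enlarge parameters. The fact that all the $\varphi_i$'s share a single parameter $c$ (rather than ranging over distinct parameters $c_i$) is what makes these verifications go through and allows Adler's original argument for dividing in $\NTP_2$ theories to adapt verbatim to Kim-dividing in arbitrary theories.
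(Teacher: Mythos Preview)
Your inductive strategy is right, but the inductive step has a genuine gap at the claim that each $\Phi_i(x;C) = \bigvee_{l}\varphi_i(x;c_{j_l})$ Kim-divides over $M$. The justification you offer---that the $c_{j_l}$'s sit inside a Morley sequence for an $M$-invariant type---does not help: that type is $r$, the witness to Kim-dividing of $\varphi_{n-1}$, and there is no reason $\varphi_i$ for $i<n-1$ should divide along Morley sequences for $r$. Each $\varphi_i(x;c)$ Kim-divides via its own invariant type $s_i$, and since $\tp(C/M)$ is the type of an initial segment of an $r$-Morley sequence, extending it to an invariant type whose marginals behave like $s_i$ is exactly the left-extension problem for $\ind^i$ that you flag at the end. (There are secondary difficulties as well: with interleaved $b_j$'s it is not clear that $B\ind^i_M C$, and deducing $\pi\models\psi(x;b)$ from $\pi\models\bigvee_l\psi(x;b_{j_l})$ requires more than $M$-indiscernibility and $M$-invariance of $\pi$.)

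Adler's argument avoids all of this with two changes. First, use a \emph{single} $b'\models q|_{M c^0\dots c^{k-1}}$: then $b'c^l\equiv_M bc$ for every $l<k$, and invoking $k$-inconsistency yields $\pi(x)\models\psi(x;b')\vee\bigvee_{l<k}\bigvee_{i<n-1}\varphi_i(x;c^l)$. Second, peel off the $c^l$'s one at a time rather than bundling them. To remove $c^0$, absorb the $l\geq 1$ disjuncts into the $\psi$-side and apply the $(n-1)$-case with Kim-dividing parameter $c^0$ and new ``$b$''-tuple $(b',c^1,\dots,c^{k-1})$; this tuple satisfies $\ind^i_M c^0$ because $(c^1,\dots,c^{k-1})$ continues the $r$-Morley sequence over $Mc^0$ and $b'$ realizes $q$ over the whole configuration. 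Crucially, each $\varphi_i(x;c^0)$ Kim-divides simply because $c^0\equiv_M c$---no need to combine parameters. After $k$ such applications one has $\pi(x)\models\psi(x;b')$, and $b'\equiv_M b$ together with $M$-invariance of $\pi$ gives $\pi(x)\models\psi(x;b)$.
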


\begin{cor} \label{inconsistent}
Suppose $\theta(x;b)\models \bigvee_{i < n} \varphi_{i}(x;c)$, where each $\varphi_{i}(x;c)$ Kim-divides over $M$. Then $\theta(x;b)$ quasi-divides over $M$. 
\end{cor}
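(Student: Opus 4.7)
The plan is to deduce the corollary directly from the Vacuum Cleaner Lemma (Lemma~\ref{vacuum cleaner}) by bundling all $M$-conjugates of the formula $\theta(x;b)$ into a single $M$-invariant partial type. Suppose for contradiction that $\theta(x;b)$ does not quasi-divide over $M$. Then, by compactness, the set of formulas
\[
\pi(x) := \{\theta(x;b') : b' \equiv_M b\}
\]
is consistent, so it is a partial type. It is also $M$-invariant, since the set of parameters $\{b' : b' \equiv_M b\}$ is by definition closed under $M$-automorphisms; so $\pi$ qualifies as an $M$-invariant partial type in the sense of Lemma~\ref{vacuum cleaner}.

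Because $\theta(x;b) \in \pi$, the hypothesis $\theta(x;b) \models \bigvee_{i<n} \varphi_i(x;c)$ gives
\[
\pi(x) \models \bigvee_{i<n} \varphi_i(x;c).
\]
We now apply the Vacuum Cleaner Lemma to $\pi$, taking $\psi(x)$ to be an inconsistent formula (for instance, $x \neq x$) with empty parameter tuple; the empty tuple is trivially $\ind^{i}_{M} c$ (it extends to the unique global type in zero variables). The lemma's conclusion is $\pi(x) \models \psi(x)$, i.e.\ $\pi$ is inconsistent, contradicting our standing assumption. Hence $\theta(x;b)$ quasi-divides over $M$.

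The one point that warrants a sanity check is the admissibility of the degenerate choice of $\psi(x) = \bot$ (with empty parameter tuple) in the Vacuum Cleaner Lemma; there is no nondegeneracy hypothesis on $\psi$ or on the parameter tuple in its statement, so this is immediate. The other (also routine) verification is the $M$-invariance of $\pi$, which simply reflects the fact that $\pi$ is defined as the set of formulas obtained from $\theta(x;b)$ by applying all $M$-automorphisms. In this sense the proof is purely formal, with all of the real content having been packed into Lemma~\ref{vacuum cleaner} itself.
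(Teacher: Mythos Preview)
Your proof is correct and follows essentially the same approach as the paper: define $\pi(x) = \{\theta(x;b') : b' \equiv_M b\}$, take $\psi = \bot$, apply the Vacuum Cleaner Lemma to conclude $\pi$ is inconsistent, and invoke compactness to get quasi-dividing. The only difference is cosmetic\textemdash you phrase it as a proof by contradiction and spell out the routine checks (invariance of $\pi$, the degenerate $\ind^i$ condition) that the paper leaves implicit.
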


\begin{proof}
Let $\pi(x) = \{\theta(x;b') : b'\equiv_M b\}$ and let $\psi$ be $\bot$. By Lemma~\ref{vacuum cleaner}, $\pi(x)$ is inconsistent, so, by compactness, $\theta(x;b)$ quasi-divides. 
\end{proof}

Corollary~\ref{inconsistent} seems to say that Kim-forking formulas quasi-divide. But, as noted in Remark~\ref{rem:Kdparameters} above, we cannot assume in general that in the finite disjunction $\bigvee_{i<n}\varphi_i(x;c_i)$ witnessing Kim-forking, all of the Kim-dividing formulas have the same tuple of parameters $c$. Unfortunately, this assumption seems crucial in Adler's proof of the Vacuum Cleaner Lemma. As in Remark~\ref{rem:Kdparameters}, this would not be an issue if we defined Kim-dividing in terms of dividing along coheir sequences. 

Nevertheless, it is true in general that Kim-forking formulas quasi-divide. We present an alternative proof, based on an idea due to Hanson. 

\begin{lem} \label{lem:quasidividing}
Let $\varphi(x;b)$ be a formula. Suppose that the conjunction of finitely many conjugates of $\varphi(x;b)$ over $M$ entails a formula which quasi-divides over $M$. Then $\varphi(x;b)$ quasi-divides over $M$. 
\end{lem}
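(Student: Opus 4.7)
The plan is to prove this by a direct automorphism-theoretic argument: if quasi-dividing is inherited by an entailing formula that is already a conjunction of conjugates of $\varphi(x;b)$, then we can assemble finitely many conjugates of that entire conjunction into a single inconsistent conjunction of conjugates of $\varphi(x;b)$.

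Concretely, I would unpack the hypothesis as follows. Let $b_1,\dots,b_n$ be tuples with $b_i \equiv_M b$ for each $i < n$, and let $\theta(x;c)$ be a formula quasi-dividing over $M$, such that
\[
\bigwedge_{i=1}^{n}\varphi(x;b_i) \models \theta(x;c).
\]
By definition of quasi-dividing, choose $c_1,\dots,c_k$ with $c_j \equiv_M c$ for each $j \le k$ such that $\bigwedge_{j=1}^{k}\theta(x;c_j)$ is inconsistent.

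Next, for each $j \le k$ pick an automorphism $\sigma_j \in \Aut(\monster/M)$ with $\sigma_j(c) = c_j$, and set $b_i^{(j)} = \sigma_j(b_i)$. Since entailment in $T$ is preserved by $M$-automorphisms of $\monster$, applying $\sigma_j$ to the displayed entailment yields
\[
\bigwedge_{i=1}^{n}\varphi(x;b_i^{(j)}) \models \theta(x;c_j),
\]
and $b_i^{(j)} \equiv_M b_i \equiv_M b$ so each $b_i^{(j)}$ is a conjugate of $b$ over $M$. Conjoining these entailments over all $j \le k$ gives
\[
\bigwedge_{j=1}^{k}\bigwedge_{i=1}^{n}\varphi(x;b_i^{(j)}) \models \bigwedge_{j=1}^{k}\theta(x;c_j),
\]
whose right-hand side is inconsistent. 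Hence the left-hand side is inconsistent, exhibiting $\varphi(x;b)$ as quasi-dividing over $M$.

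There is no real obstacle here; the argument is purely a conjugation/pigeonhole bookkeeping. The only subtlety worth flagging is that we are not trying to track Kim-dividing through this step (so the dummy-parameter issue from Remark~\ref{rem:Kdparameters} does not interfere) — quasi-dividing is a purely combinatorial/automorphism-invariant notion, and that is precisely why it survives this kind of manipulation.
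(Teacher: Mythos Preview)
Your proof is correct and essentially identical to the paper's: both unpack the hypothesis, pick conjugates $c_j$ of $c$ witnessing quasi-dividing of $\theta$, and then for each $j$ transport the tuple $(b_1,\dots,b_n,c)$ to one ending in $c_j$ (you via explicit automorphisms $\sigma_j$, the paper via the equivalent choice of $(b_{i,j})_i$ with $b_{0,j}\cdots b_{(n-1),j}c_j \equiv_M b_0\cdots b_{n-1}c$) to obtain an inconsistent finite conjunction of conjugates of $\varphi(x;b)$. The only quibble is that your closing remark about ``pigeonhole bookkeeping'' is inapt---no pigeonhole is used here---but this does not affect the argument.
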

\begin{proof}
By hypothesis, there exist $(b_i)_{i<k}$ with $b_i\equiv_M b$ for all $i<k$ such that $\bigwedge_{i<k}\varphi(x;b_i)\models \psi(x;c)$, and $\psi(x;c)$ quasi-divides over $M$. Then there exist $(c_j)_{j<n}$ with $c_j\equiv_M c$ for all $j<n$ such that $\bigwedge_{j<n}\psi(x;c_j)$ is inconsistent. 

For each $j<n$, pick $(b_{i,j})_{i<k}$ such that $b_{0,j} \dots b_{(k-1),j} c_j \equiv_M b_0\dots b_{k-1} c$. Then \[\bigwedge_{j<n}\bigwedge_{i<k}\varphi(x;b_{i,j})\models \bigwedge_{j<n} \psi(x;c_j).\]
For all $i<k$ and $j<n$, $b_{i,j}\equiv_M b_i \equiv_M b$, so this is a finite conjunction of conjugates of $\varphi(x;b)$ over $M$ which is inconsistent. 
\end{proof}

\begin{lem} \label{lem:dustpan}
Suppose $\varphi(x;b)$ Kim-divides over $M$. Then for any $(b_i)_{i<\ell}$ such that $b_i\equiv_M b$ for all $i<\ell$, $\bigvee_{i<\ell}\varphi(x;b_i)$ quasi-divides over $M$. 
\end{lem}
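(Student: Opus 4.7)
The plan is to apply Lemma~\ref{lem:quasidividing} to the formula $\psi(x;\bar{b}) := \bigvee_{i<\ell}\varphi(x;b_i)$: it will suffice to exhibit finitely many $M$-conjugates $\bar{b}^{(0)}, \ldots, \bar{b}^{(N-1)}$ of $\bar{b}$ such that $\bigwedge_{m<N}\psi(x;\bar{b}^{(m)})$ entails a formula which quasi-divides over $M$.

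I would proceed by induction on $\ell$. When $\ell = 1$, since $b_0 \equiv_M b$, $\varphi(x;b_0)$ itself Kim-divides over $M$; taking the first $k$ terms $c_0, \ldots, c_{k-1}$ of a Morley sequence for a witnessing global $M$-invariant type $q$ extending $\tp(b/M)$ makes $\bigwedge_{j<k}\varphi(x;c_j)$ inconsistent, and so $\varphi(x;b_0)$ quasi-divides directly. For the inductive step, fix such a Morley sequence $(c_j)_{j<k}$ and, using homogeneity of $\monster$, choose $\tau_j \in \Aut(\monster/M)$ with $\tau_j(b_0) = c_j$; set $\bar{b}^{(j)} := \tau_j(\bar{b})$, so that $b_0^{(j)} = c_j$ and $\bar{b}^{(j)} \equiv_M \bar{b}$. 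Writing $\theta_j := \bigvee_{1\leq i<\ell}\varphi(x;b_i^{(j)})$, the conjunction
\[
\bigwedge_{j<k}\psi(x;\bar{b}^{(j)}) \;=\; \bigwedge_{j<k}\bigl(\varphi(x;c_j) \vee \theta_j\bigr)
\]
entails $\bigvee_{j<k}\theta_j$: any realizer must satisfy either $\varphi(x;c_j)$ or $\theta_j$ at each $j$, and by $k$-inconsistency of $\bigwedge_{j<k}\varphi(x;c_j)$ the first option cannot hold for every $j$.

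The main obstacle will be showing that this entailed disjunction $\bigvee_{j<k}\theta_j$ quasi-divides over $M$. As a disjunction of $k(\ell-1)$ conjugates of $\varphi(x;b)$, it typically has \emph{more} disjuncts than $\psi$ itself, so the naive induction on the number of disjuncts does not close. To complete the argument I would either strengthen the inductive hypothesis (for instance, by a two-parameter induction tracking both the length of the disjunction and the inconsistency number $k$, or by proving a uniform quasi-dividing statement for all finite lengths at once) or else choose the $\bar{b}^{(j)}$ more cleverly—exploiting homogeneity in \emph{all} $\ell$ coordinates simultaneously, rather than only the zeroth, so that the entailed formula reduces to a shorter disjunction, or to a conjunction with a quasi-dividing conjunct that Lemma~\ref{lem:quasidividing} can consume. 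Carefully orchestrating this finer combinatorial setup, which sidesteps the left-extension failure flagged in Remark~\ref{rem:Kdparameters} by not requiring simultaneous invariant Morley behavior across all columns, is the crux of the proof and is, I expect, where the ``idea due to Hanson'' enters.
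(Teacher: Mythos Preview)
Your proposal is incomplete, and you say so yourself: the induction on $\ell$ does not close, because the entailed disjunction $\bigvee_{j<k}\theta_j$ has $k(\ell-1)$ conjugates of $\varphi(x;b)$ rather than fewer than $\ell$. Your speculation about strengthening the inductive hypothesis or choosing the $\bar{b}^{(j)}$ more cleverly is vague, and neither suggestion is carried out. So as written there is a genuine gap: the crux of the argument is missing.

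The paper's proof avoids induction on $\ell$ entirely and does not invoke Lemma~\ref{lem:quasidividing}. Instead it directly constructs enough conjugates of $\overline{b}$ to witness quasi-dividing, organized as a tree. Writing $\Phi(x;\overline{b}) = \bigvee_{i<\ell}\varphi(x;b_i)$, one builds, by induction on $m\leq k$, a forest $(b_\eta)_{\eta\in\ell^{\leq m}_*}$ such that (i) for each leaf $\rho\in\ell^m$, the sequence $(b_\rho, b_{\rho|_{m-1}},\dots,b_{\rho|_1})$ begins a Morley sequence for $q$ over $M$, and (ii) for each internal node $\eta\in\ell^{<m}$, the tuple of children $\overline{b}'_\eta := (b_{\eta^\frown\langle i\rangle})_{i<\ell}$ is an $M$-conjugate of $\overline{b}$. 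The inductive step is: given such a forest $F_0$, realize $q|_{MF_0}$ by some $b''_0$, extend to a tuple $(b''_i)_{i<\ell}\equiv_M \overline{b}$, and for each $i$ attach a copy $F_i$ of $F_0$ above $b''_i$ with $F_ib''_i\equiv_M F_0b''_0$. Once $m=k$, distributivity gives
\[
\bigwedge_{\eta\in\ell^{<k}}\Phi(x;\overline{b}'_\eta)\;\models\;\bigvee_{\rho\in\ell^k}\bigwedge_{1\leq i\leq k}\varphi(x;b_{\rho|_i}),
\]
and each conjunction on the right is inconsistent by (i). So the left-hand side, a conjunction of $M$-conjugates of $\Phi(x;\overline{b})$ by (ii), is inconsistent. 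This is exactly quasi-dividing. The idea you were reaching for---choosing conjugates ``in all $\ell$ coordinates simultaneously''---is realized not by a cleverer flat choice but by iterating $k$ times in a tree, so that every branch is a Morley sequence.
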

\begin{proof}
Write $\Phi(x;\overline{b})$ for the formula $\bigvee_{i<\ell}\varphi(x;b_i)$. Our goal is to show that $\Phi(x;\overline{b})$ quasi-divides. Let $q(y)$ be a global $M$-invariant type extending $\tp(b/M)$ and witnessing that $\varphi(x;b)$ Kim-divides over $M$. Let $k$ be such that, if $(b'_i)_{i< \omega}$ is a Morley sequence for $q$ over $M$, $\{\varphi(x;b'_i) : i<k\}$ is inconsistent.  

Write $\ell^{\leq m}_*$ for the set of functions $\eta\colon n\to \ell$ with $0<n\leq m$, that is, $\ell^{\leq m}_{*} = \ell^{\leq m} \setminus \{\langle \rangle\}$. We will prove by induction that for all $m\leq k$, we can find $(b_\eta)_{\eta\in \ell^{\leq m}_*}$ such that 
\begin{enumerate}[(1)]
\item For each $\rho\in \ell^m$, $(b_\rho,b_{\rho|_{m-1}},\dots,b_{\rho|_1})$ begins a Morley sequence in $q$ over $M$. 
\item For each $\eta\in \ell^{<m}$, writing $\overline{b}'_\eta$ for the tuple $(b_{\eta^\frown\langle i\rangle})_{i<\ell}$, we have $\overline{b}'_\eta\equiv_M \overline{b}$. 
\end{enumerate}

In the base case, when $m = 0$, $\ell^{\leq m}_*$ is empty, and the conditions are satisfied vacuously. 

For the inductive step, suppose we are given $F_0 = (b_\eta)_{\eta\in \ell^{\leq m}_*}$ satisfying the conditions, with $m<k$. Let $b''_0$ realize $q|_{MF_0}$. By condition (1), we now have that for each $\rho\in \ell^m$, $(b_\rho,b_{\rho|_{m-1}},\dots,b_{\rho|_1},b''_0)$ begins a Morley sequence in $q$ over $M$.

Since $b''_0\equiv_M b_0$, we can pick $(b''_i)_{0<i<\ell}$ so that $(b''_i)_{i<\ell} \equiv_M \overline{b}$. Now, for each $0< i < \ell$, pick $F_i$ so that $F_i b''_i\equiv_M F_0 b''_0$. Reindex so that we have a forest indexed by $\ell^{\leq (m+1)}_*$, with $(b''_i)_{i<\ell}$ as the ``bottom layer''  $\overline{b}'_{\langle\rangle}$. This completes the inductive construction. 

Now we have $(b_\eta)_{\eta\in \ell^{\leq k}_*}$ satisfying (1) and (2). Observe that \[\bigwedge_{\eta\in \ell^{<k}} \bigvee_{i<\ell} \varphi(x;b_{\eta^\frown\langle i\rangle})\models \bigvee_{\rho\in \ell^{k}} \bigwedge_{1\leq i\leq k} \varphi(x;b_{\rho|_i}).\] By (1), for each $\rho\in \ell^k$, $\bigwedge_{1\leq i\leq k} \varphi(x;b_{\rho|_i})$ is inconsistent. Thus the left-hand side, which is $\bigwedge_{\eta\in \ell^{<k}} \Phi(x;\overline{b}'_\eta)$, is inconsistent. By (2), this shows that $\Phi(x;\overline{b})$ quasi-divides over $M$.  
\end{proof}

\begin{lem} \label{lem:mop}
Suppose $(\varphi_i(x;b_i))_{i<n}$ are formulas, each of which Kim-divides over $M$. For each $i<n$, let $\theta_i(x;c_i)$ be a disjunction of finitely many conjugates of $\varphi_i(x;b_i)$. Then $\bigvee_{i<n} \theta_i(x;c_i)$ quasi-divides over $M$. 
\end{lem}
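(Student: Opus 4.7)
The plan is to prove the lemma by induction on $n$, using Lemma~\ref{lem:dustpan} as the base case and Lemma~\ref{lem:quasidividing} as the engine of the inductive step. For $n=1$, the formula $\theta_0(x;c_0)$ is a disjunction of conjugates of the Kim-dividing formula $\varphi_0(x;b_0)$, so Lemma~\ref{lem:dustpan} already gives what we want.

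For the inductive step, I would write $\Phi(x;c_0,\ldots,c_{n-1}) = \bigvee_{i<n}\theta_i(x;c_i)$ and separate off the last disjunct, setting $\Theta(x;c_0,\ldots,c_{n-2}) = \bigvee_{i<n-1}\theta_i(x;c_i)$. By induction, $\Theta$ quasi-divides over $M$, so there exist $((c_0^s,\ldots,c_{n-2}^s))_{s<m}$, each $M$-conjugate to $(c_0,\ldots,c_{n-2})$, such that $\bigwedge_{s<m}\Theta(x;c_0^s,\ldots,c_{n-2}^s)$ is inconsistent. For each $s$, extend to $c_{n-1}^s$ so that the full tuple $(c_0^s,\ldots,c_{n-1}^s)\equiv_M (c_0,\ldots,c_{n-1})$. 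Then each $\Phi^s := \Phi(x;c_0^s,\ldots,c_{n-1}^s)$ is an honest $M$-conjugate of $\Phi$. A straightforward case split shows
\[\bigwedge_{s<m}\Phi^s \models \bigvee_{s<m}\theta_{n-1}(x;c_{n-1}^s),\]
because if $x$ fails every disjunct on the right, it must satisfy $\Theta$ at each of the $m$ tuples, contradicting inconsistency.

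The right-hand side is a disjunction of conjugates of $\varphi_{n-1}(x;b_{n-1})$ (because each $\theta_{n-1}(x;c_{n-1}^s)$ is itself such a disjunction, since $c_{n-1}^s \equiv_M c_{n-1}$). Since $\varphi_{n-1}(x;b_{n-1})$ Kim-divides over $M$, Lemma~\ref{lem:dustpan} applies and tells us that $\bigvee_s \theta_{n-1}(x;c_{n-1}^s)$ quasi-divides. A finite conjunction of $M$-conjugates of $\Phi$ therefore entails a quasi-dividing formula, and Lemma~\ref{lem:quasidividing} concludes that $\Phi$ quasi-divides.

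The main subtlety I anticipate is the bookkeeping around parameters flagged in Remark~\ref{rem:Kdparameters}: one must be scrupulous that the $\Phi^s$ are genuine $M$-conjugates of the joint tuple $(c_0,\ldots,c_{n-1})$, not merely tuples whose coordinates are separately conjugate to the $c_i$'s. This is what forces the step of extending the $M$-conjugacy from the initial segment $(c_0^s,\ldots,c_{n-2}^s)$ to the full tuple by choosing $c_{n-1}^s$ appropriately, and it is also what makes Lemma~\ref{lem:quasidividing} the right tool — it lets us work with conjunctions of conjugates of a compound formula without needing to assume anything about the parameters apart from their joint $M$-type.
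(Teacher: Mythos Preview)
Your proof is correct and uses the same ingredients as the paper's proof: induction on $n$, Lemma~\ref{lem:dustpan}, and Lemma~\ref{lem:quasidividing}. The only difference is a harmless swap in the inductive step: you apply the inductive hypothesis to $\bigvee_{i<n-1}\theta_i$ to get the conjugates and then use Lemma~\ref{lem:dustpan} on the resulting disjunction of conjugates of $\varphi_{n-1}$, whereas the paper first applies Lemma~\ref{lem:dustpan} to $\theta_n$ to get the conjugates and then invokes the inductive hypothesis on the remaining disjunction $\bigvee_{i<n}\bigvee_{j<k}\theta_i(x;c_{ij})$ (noting that each $\bigvee_{j<k}\theta_i(x;c_{ij})$ is again a disjunction of conjugates of $\varphi_i$). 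Either ordering works, and your care with joint $M$-conjugacy of the parameter tuples is exactly right.
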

\begin{proof}
By induction on $n$. When $n = 0$, the disjunction is $\bot$, which quasi-divides over $M$. For the inductive step, we consider $\bigvee_{i<n+1} \theta_i(x;c_i)$. Now $\theta_n(x;c_n)$ is a disjunction of finitely many conjugates of $\varphi_n(x;b_n)$. By Lemma~\ref{lem:dustpan}, $\theta_n(x;c_n)$ quasi-divides over $M$, so there are $(c_{nj})_{j<k}$ with $c_{nj}\equiv_M c_n$ for all $j<k$ such that $\bigwedge_{j<k}\theta_n(x;c_{nj})$ is inconsistent. 

For each $j<k$, pick $(c_{ij})_{i<n}$ such that $c_{0j}\dots c_{nj}\equiv_M c_0\dots c_n$. Consider the conjunction \[\bigwedge_{j<k} \bigvee_{i<n+1}\theta_i(x;c_{ij}).\] 
Whenever this formula is true, there must be some $j<k$ such that some disjunct $\theta_i(x;c_{ij})$ with $i\neq n$ is true, since $\bigwedge_{j<k}\theta_n(x;c_{nj})$ is inconsistent. Thus  
\[\bigwedge_{j<k} \bigvee_{i<n+1}\theta_i(x;c_{ij})\models \bigvee_{i<n}\bigvee_{j<k}\theta_i(x;c_{ij}).\]
Since each formula $\bigvee_{j<k}\theta_i(x;c_{ij})$ is a disjunction of finitely many conjugates of $\varphi_i(x;b_i)$, by induction $\bigvee_{i<n}\bigvee_{j<k}\theta_i(x;c_{ij})$ quasi-divides over $M$. By Lemma~\ref{lem:quasidividing}, $\bigvee_{i<n+1}\theta_i(x;c_{ij})$ quasi-divides over $M$. 
\end{proof}

\begin{cor}\label{cor:kfqd}
Every formula which Kim-forks over $M$ quasi-divides over $M$. 
\end{cor}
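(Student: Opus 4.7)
The plan is to derive this corollary almost immediately by combining Lemma~\ref{lem:mop} and Lemma~\ref{lem:quasidividing}, both of which have already done the technical work of navigating the ``mismatched parameters'' issue flagged in Remark~\ref{rem:Kdparameters}.

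Unpacking the definition of Kim-forking, if $\varphi(x;b)$ Kim-forks over $M$, then there are formulas $(\psi_i(x;c_i))_{i<n}$, each Kim-dividing over $M$, such that $\varphi(x;b)\models \bigvee_{i<n}\psi_i(x;c_i)$. The first step is to apply Lemma~\ref{lem:mop} to this collection, where for each $i<n$ I take $\theta_i(x;c_i)$ to be simply $\psi_i(x;c_i)$ itself, viewed trivially as a disjunction of one conjugate of $\psi_i(x;b_i)=\psi_i(x;c_i)$. Lemma~\ref{lem:mop} then delivers that the disjunction $\bigvee_{i<n}\psi_i(x;c_i)$ quasi-divides over $M$.

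For the second step, observe that $\varphi(x;b)$ itself is trivially a conjunction of one conjugate of $\varphi(x;b)$ over $M$, and by hypothesis it entails $\bigvee_{i<n}\psi_i(x;c_i)$, which we just showed quasi-divides. An application of Lemma~\ref{lem:quasidividing} then concludes that $\varphi(x;b)$ quasi-divides over $M$.

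There is essentially no obstacle left at this stage: the subtle point about Kim-dividing formulas living over possibly distinct parameter tuples was already absorbed by Lemma~\ref{lem:mop}, whose inductive proof (built on Lemma~\ref{lem:dustpan}) handles each $\psi_i(x;c_i)$ with its own parameters; and the ``entailment implies quasi-dividing'' step is precisely the content of Lemma~\ref{lem:quasidividing}. So the corollary is just a matter of packaging these two lemmas together in the right order.
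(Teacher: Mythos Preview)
Your proof is correct and follows essentially the same approach as the paper: apply Lemma~\ref{lem:mop} with each $\theta_i$ taken to be the single disjunct $\psi_i(x;c_i)$, then invoke Lemma~\ref{lem:quasidividing} to pass quasi-dividing from the disjunction back to $\varphi(x;b)$.
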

\begin{proof}
Suppose $\varphi(x;b)$ Kim-forks over $M$. Then $\varphi(x;b)\models \bigvee_{i<n} \psi_i(x;c_i)$ such that each $\psi_i(x;c_i)$ Kim-divides over $M$. By Lemma~\ref{lem:mop} (taking each $\theta_i$ to be $\psi_i(x;c_i)$), $\bigvee_{i<n} \psi(x;c_i)$ quasi-divides over $M$, and hence so does $\varphi(x;b)$ by Lemma~\ref{lem:quasidividing}.
\end{proof}

\begin{thm} \label{thm:kimstrict}
Every type over $M \models T$ has a Kim-strict $M$-invariant global extension.
\end{thm}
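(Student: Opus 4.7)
The plan is to adapt the Chernikov--Kaplan argument for the existence of global strict invariant extensions of types in $\NTP_2$ theories, with Corollary~\ref{cor:kfqd} playing the role of their Broom Lemma. Given $p_0 \in S(M)$, set
\[\Sigma(x) = p_0(x) \cup \{\neg\varphi(x;b) : \varphi(x;b) \text{ Kim-forks over } M\}.\]
The idea is to show $\Sigma$ is finitely satisfiable in $M$, extend to a global type $q$ finitely satisfiable in $M$ (automatically $M$-invariant), and then verify $q$ is Kim-strict.

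For the consistency step, suppose $\Sigma$ is not finitely satisfiable in $M$. Then, by compactness, for some $\theta(x) \in p_0$ and some Kim-forking formulas $\varphi_1(x;b_1),\ldots,\varphi_n(x;b_n)$, no $m \in M$ satisfies $\theta(m) \wedge \bigwedge_i \neg\varphi_i(m;b_i)$; equivalently, $\theta(m) \to \Psi(m;\bar b)$ holds for every $m \in M$, where $\Psi(x;\bar b) = \bigvee_i \varphi_i(x;b_i)$ and $\bar b = (b_1,\ldots,b_n)$. The formula $\Psi$ is a disjunction of Kim-forking formulas and hence itself Kim-forks over $M$, so by Corollary~\ref{cor:kfqd} it quasi-divides: there are conjugates $\bar b^1,\ldots,\bar b^k$ of $\bar b$ over $M$ with $\bigwedge_j \Psi(x;\bar b^j)$ inconsistent. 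Applying the $M$-automorphism sending $\bar b$ to $\bar b^j$ to the implication $\theta(m) \to \Psi(m;\bar b)$ (under which $\theta$ and $m \in M$ are fixed) yields $\theta(m) \to \Psi(m;\bar b^j)$ for every $j$ and every $m \in M$. Hence $\theta(m) \to \bot$ throughout $M$; but since $\theta(x) \in p_0$ is a satisfiable $L(M)$-formula, it must be realized in $M$ (as $M$ is an elementary submodel of $\monster$), a contradiction.

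Having established that $\Sigma$ is finitely satisfiable in $M$, extend $\Sigma$ to a complete global type $q$ finitely satisfiable in $M$ via the ultrafilter construction of Remark~\ref{rem:coheir}; such $q$ is automatically $M$-invariant and, by construction, avoids every Kim-forking formula. The final step is to verify Kim-strictness: for any $a \models q|_{MB}$, the type $\tp(B/Ma)$ must contain no formula Kim-forking in $y$ over $M$. This is the principal obstacle of the proof, since Kim-forking is not symmetric in general (it is symmetric only in $\NSOP_1$ theories), so the fact that $q$ avoids Kim-forking $x$-formulas does not automatically exclude Kim-forking $y$-formulas in $\tp(B/Ma)$. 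The strategy is to leverage the coheir property of $q$ — which provides witnesses in $M$ for every formula in $q$ — together with the $M$-invariance of Kim-forking to transfer a hypothetical Kim-forking $y$-formula $\psi(y;a,m) \in \tp(B/Ma)$ to a Kim-forking $x$-formula appearing in $q$, producing a contradiction with $q \supseteq \Sigma$.
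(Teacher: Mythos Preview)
Your argument has a genuine gap in the final step, and the root cause is that you exclude the wrong set of formulas from $\Sigma$. You omit $\varphi(x;b)$ whenever $\varphi(x;b)$ Kim-forks over $M$ as a formula in $x$; but any coheir extension of $p_0$ already avoids all forking (hence all Kim-forking) formulas, so your consistency argument, while correct, establishes nothing beyond the trivial fact that $p_0$ has a coheir extension. The substantive content of the theorem is Kim-strictness, and your sketch for that does not go through: knowing that $\psi(y;a,m)$ Kim-forks over $M$ in the $y$-variable gives no mechanism for producing an $x$-formula in $q$ that Kim-forks. Finite satisfiability lets you replace $a$ by some $m'\in M$ in $\psi(B;x,m)\in q$, but $\psi(y;m',m)$ is an $L(M)$-formula and does not Kim-fork, so nothing is transferred. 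In general not every coheir is Kim-strict, so no argument of this shape can succeed without further input.

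The paper's proof fixes this by building Kim-strictness directly into the partial type. Fixing $a\models p_0$, it considers
\[
p_0(x)\cup\{\psi(x;c)\leftrightarrow\psi(x;c'):c\equiv_M c'\}\cup\{\neg\varphi(x;b):\varphi(a;y)\text{ Kim-forks over }M\},
\]
so the excluded formulas are those for which the \emph{dual} instance $\varphi(a;y)$ Kim-forks. Any completion is then automatically Kim-strict: if $a'\models q|_{MB}$ and $\varphi(B;a')$ held with $\varphi(y;a')$ Kim-forking, then (since $a'\equiv_M a$) $\varphi(a;y)$ Kim-forks, so $\neg\varphi(x;B)\in q$, a contradiction. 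The consistency argument then applies Corollary~\ref{cor:kfqd} to $\varphi(a;y)$ rather than to $\varphi(x;b)$: quasi-dividing yields conjugates $a_0,\dots,a_{m-1}$ of $a$ with $\{\varphi(a_i;y):i<m\}$ inconsistent, and one derives a contradiction by restricting a global $M$-invariant extension of $\tp(a_0\cdots a_{m-1}/M)$ to each coordinate. The key move you are missing is this swap of object and parameter variables when deciding which formulas to exclude.
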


\begin{proof}
Given $p(x) = \text{tp}(a/M)$, consider the following collection of formulas:
\[
p(x) \cup \{\psi(x;c) \leftrightarrow \psi(x;c') : c \equiv_{M} c'\}\cup \{\neg \varphi(x;b) : \varphi(a;y) \text{ Kim-forks over }M\}.
\]
We must show that this is a consistent partial type.  Suppose not; then, by compactness, 
\[
p(x) \cup \{\psi(x;c) \leftrightarrow \psi(x;c') : c \equiv_{M} c'\} \models \varphi(x;b),
\]
for some formula $\varphi(x;y)$ such that $\varphi(a;y)$ Kim-forks over $M$.  

By Corollary~\ref{cor:kfqd}, there are $(a_i)_{i<m}$ such that $a_i\equiv_M a$ for all $i<m$ and $\{\varphi(a_{i},y) : i < m\}$ is inconsistent.  Let $r(x_0,\dots,x_{m-1})$ be a global $M$-invariant type extending $\text{tp}(a_{0},\ldots, a_{m-1}/M)$, and for $j<m$, let $r(x_j)$ be the restriction of $r$ to formulas with free variables from $x_j$. Then each $r(x_j)$ is a global $M$-invariant type extending $p(x_j)$, so $r(x_j)\models \varphi(x_j,b)$. Thus,
\[
r(x_{0},\ldots,x_{m-1}) \models \bigwedge_{j<m} \varphi(x_{j};b),
\]
and therefore $\exists y\,\bigwedge_{j < m} \varphi(x_{j},y)\in r$. This contradicts the fact that $r$ extends $\text{tp}(a_{0},\ldots, a_{m-1}/M)$.  
\end{proof}

\begin{cor}
If $\varphi(x;b)$ universally Kim-strictly divides over $M$, then it Kim-strictly divides over $M$. 
\end{cor}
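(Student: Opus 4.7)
The plan is to observe that this corollary is an almost immediate consequence of Theorem~\ref{thm:kimstrict}. The universal-to-existential direction (``divides along Morley sequences for every relevant global type'' implies ``divides along a Morley sequence for some such type'') is trivial, provided the class of relevant witnessing types is nonempty. Thus the entire content of the corollary reduces to showing that there exists at least one global Kim-strict $M$-invariant type extending $\tp(b/M)$, which is exactly what Theorem~\ref{thm:kimstrict} provides.

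More concretely, I would argue as follows. Suppose $\varphi(x;b)$ universally Kim-strictly divides over $M$, and apply Theorem~\ref{thm:kimstrict} to the type $\tp(b/M)$ to obtain a global Kim-strictly $M$-invariant type $q(y)$ with $\tp(b/M) \subseteq q$. Let $(b_i)_{i<\omega}$ be a Morley sequence over $M$ for $q$. By the hypothesis of universal Kim-strict dividing applied to $q$, the set $\{\varphi(x;b_i) : i<\omega\}$ is inconsistent, so $\varphi(x;b)$ divides along this Morley sequence. Since $q$ is a global Kim-strict $M$-invariant type extending $\tp(b/M)$, this exhibits $\varphi(x;b)$ as Kim-strictly dividing over $M$.

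There is no real obstacle here once Theorem~\ref{thm:kimstrict} is in hand; the only point worth noting is that without the existence of Kim-strict invariant extensions, the universal-quantifier condition could hold vacuously and the corollary would fail. This is why the result is placed immediately after Theorem~\ref{thm:kimstrict}: it is the natural payoff of knowing the relevant family of invariant types is nonempty.
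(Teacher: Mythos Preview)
Your proposal is correct and matches the paper's proof essentially verbatim: apply Theorem~\ref{thm:kimstrict} to obtain a Kim-strict $M$-invariant global extension $q$ of $\tp(b/M)$, then invoke the universal hypothesis on $q$ to conclude Kim-strict dividing. The paper's version is slightly terser but the argument is identical.
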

\begin{proof}
By Theorem~\ref{thm:kimstrict}, $\tp(b/M)$ has a Kim-strict $M$-invariant global extension $q(y)$. Since $\varphi(x;b)$ universally Kim-strictly divides over $M$, it divides along Morley sequences for $q$. Thus, it Kim-strictly divides over $M$. 
\end{proof}

Note that the only properties of Kim-forking used in the proof of Theorem~\ref{thm:kimstrict} are (a) that the Kim-forking formulas form an ideal (i.e., they are closed under finite disjunctions), and (b) that every Kim-forking formula quasi-divides. In unpublished work, Hanson has shown that there is a largest $M$-invariant ideal which contains only quasi-dividing formulas, called the ``fracturing'' ideal. The proof of Theorem~\ref{thm:kimstrict} works just as well to show that $\tp(a/M)$ extends to a global $M$-invariant extension containing no formula $\varphi(x;b)$ such that $\varphi(a;y)$ fractures.

\begin{rem}\label{rem:nostrict}
In~\cite[Subsection 5.1]{chernikov2012forking}, Chernikov and Kaplan present an example, due to Martin Ziegler, of a theory $T$ in which there is a model $M \models T$ and a type over $M$ with no global extension that is strict invariant over $M$. This shows that, in general, Theorem~\ref{thm:kimstrict} cannot be improved to establish the existence of global strict invariant types over models in arbitrary theories. 
\end{rem}

We conclude this section with a diagram showing the implications between the various notions of dividing (over models) introduced in Section~\ref{sec:dividing}. All implications hold in an arbitrary theory, except for the implication from universally strictly divides to strictly divides, which requires $\NTP_2$. 

\[
\xymatrix{
\text{divides} 
 && \text{universally divides}\ar[d]\\
\text{Kim-divides} \ar[u]  && \text{universally Kim-divides}\ar[d]\\
\text{Kim-strictly divides} \ar[u] 
&& \text{universally Kim-strictly divides}\ar[ll]\ar[d]\\
\text{strictly divides} \ar[u] && \text{universally strictly divides} \ar@{-->}|{(\NTP_2)}[ll]
}\]

\section{A Diversity of Kim's Lemmas}\label{sec:diversity}

In this section, we survey the characterizations of simplicity, $\NSOP_1$, and $\NTP_2$ by Kim's Lemmas, and we introduce our New Kim's Lemma. We begin with the original Kim's Lemma in the context of simple theories. 

\begin{thm}[{\cite[Proposition 2.1]{kim1998forking}}, {\cite[Theorem 2.4]{kim2001simplicity}}]
The following are equivalent: 
\begin{enumerate}
\item $T$ is simple.
\item For all sets $C$, if a formula $\varphi(x;b)$ divides over $C$, then it divides along every $\ind^f$-Morley sequence over $C$. 
\end{enumerate}
\end{thm}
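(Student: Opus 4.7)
The plan is to prove the two directions separately. For $(1) \Rightarrow (2)$, assume $T$ is simple and let $(b_i)_{i<\omega}$ be a $\ind^f$-Morley sequence over $C$ in $\tp(b/C)$. I would argue by contradiction: suppose $\{\varphi(x;b_i) : i<\omega\}$ is consistent. Extend $(b_i)$ to a $\ind^f$-Morley sequence of length $\kappa$ for some regular $\kappa > |T|$, which is possible by the extension axiom for $\ind^f$ in simple theories. By indiscernibility and compactness, $\{\varphi(x;b_i) : i<\kappa\}$ is still consistent, realized by some $a$. Since each $\varphi(x;b_i)$ divides, hence forks (using dividing $=$ forking in simple theories), over $C$, we have $a \nind^f_C b_i$ for all $i<\kappa$. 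Applying local character to $\tp(a/C(b_i)_{i<\kappa})$, there is $S\subseteq \kappa$ with $|S|\leq |T|$ such that the type does not fork over $CS := C\cup \{b_i : i\in S\}$. Choose $j<\kappa$ with $j > \sup(S)$; then $a \ind^f_{CS}\, b_j$, while the Morley condition gives $b_j \ind^f_C CS$. Symmetry and transitivity of $\ind^f$, both valid in simple theories, then force $a \ind^f_C b_j$, contradicting $a \nind^f_C b_j$.

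For $(2) \Rightarrow (1)$, I would argue by contrapositive: assume some $\varphi(x;y)$ has $\TP$, witnessed by a tree $(a_\eta)_{\eta\in\omega^{<\omega}}$. Using Ramsey-style extraction, I would refine the tree so that it is sufficiently indiscernible: each sibling sequence $(a_{\eta^{\frown}\langle j\rangle})_{j<\omega}$ is indiscernible over the rest of the tree, and each path $(a_{\rho|i})_{i<\omega}$ is indiscernible over a fixed base $C$. Fix a branch $\rho$: the path $(c_i) = (a_{\rho|i})_{i<\omega}$ is $C$-indiscernible with $\{\varphi(x;c_i) : i<\omega\}$ consistent, while the sibling sequence at any node witnesses that $\varphi(x;c_0)$ divides over $C$. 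The main combinatorial step is to arrange the construction so that $(c_i)$ is a genuine $\ind^f$-Morley sequence over $C$; once this is achieved, condition (2) forces $\{\varphi(x;c_i)\}$ to be inconsistent, contradicting consistency along the branch.

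The hard part will be the construction in $(2)\Rightarrow(1)$, namely realizing a branch of a $\TP$-tree as an actual $\ind^f$-Morley sequence rather than merely an indiscernible one. The standard approach is to stretch the tree to a large ordinal indexing and invoke local character of non-forking (which may itself fail badly in the non-simple theory, so a finer analysis is needed), together with tree indiscernibility, in order to force $c_{i+1} \ind^f_C c_{\leq i}$ level-by-level. In the forward direction, the subtlety is instead in the precise use of symmetry and transitivity of $\ind^f$; these are exactly the features that characterize simplicity and that break down in non-simple theories, which is ultimately what the equivalence pinpoints.
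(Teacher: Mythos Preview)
Your forward direction $(1)\Rightarrow(2)$ is correct and is essentially the classical argument via local character, symmetry, and transitivity of $\ind^f$ in simple theories. Note, however, that the paper does not actually give its own proof of $(1)\Leftrightarrow(2)$; it cites this from the literature and instead proves, in a self-contained way, the equivalence of $(1)$ with a weaker variant $(3)$ (the same statement restricted to models $M$ and to Morley sequences for global $M$-invariant types). For that variant, the paper's $(1)\Rightarrow(3)$ is argued by contrapositive via an explicit tree construction: if some $\varphi(x;b)$ divides over $M$ but fails to divide along a Morley sequence for a global $M$-invariant $q$, one inductively builds a $\TP$-tree by placing copies of the dividing sequence as the children of each node and using realizations of $q$ to form the paths. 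Both arguments are valid; yours leverages the structural theory of forking once simplicity is in hand, while the paper's extracts $\TP$ directly from the failure of the lemma.

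Your backward direction $(2)\Rightarrow(1)$ has a genuine gap, which you yourself flag: you need a branch of the $\TP$-tree to be a $\ind^f$-Morley sequence over $C$, but in a non-simple theory there is no reason such sequences exist at all. Local character of non-forking can fail badly, so your proposed scheme of forcing $c_{i+1}\ind^f_C c_{\le i}$ level-by-level has no leverage. The paper circumvents this precisely by proving $(3)\Rightarrow(1)$ rather than $(2)\Rightarrow(1)$: the genericity requirement is weakened from non-forking to invariance. Concretely, one Skolemizes, stretches the $\TP$-tree to $\kappa$-branching, extracts an array whose first column $(b'_{i,0})$ is indiscernible, takes the Skolem hull $M$ of $(b'_{i,0})_{i<\omega}$, and observes that $\tp(b'_{\omega,0}/M(b'_{i,0})_{i>\omega})$ is finitely satisfiable in $M$. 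The tail of the column is then a coheir (hence invariant-Morley) sequence over $M$ along which $\varphi$ does not divide, while the corresponding row witnesses ordinary dividing. The point is that coheir sequences over models exist in any theory, which is what makes the argument go through without any simplicity assumption; your sketch lacks a substitute for this existence step.
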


In this paper, we are primarily interested in Morley sequences for global invariant types over models (rather than $\ind^f$-Morley sequences over arbitrary sets), so we are led to consider the following variant of (2): 

\begin{enumerate}
\setcounter{enumi}{2}
\item For all models $M$, if a formula $\varphi(x;b)$ divides over $M$, then it universally Kim-divides over $M$. 
\end{enumerate}

Note that (3) is a weakening of (2), since it restricts to the special case of models, and since every Morley sequence for a global $M$-invariant type is a $\ind^f$-Morley sequence over $M$. But (3) is still strong enough to characterize simplicity. 

The equivalence of (1) and (3) has not (to our knowledge) appeared explicitly in the literature, but it does follow directly from facts in the literature. We have already observed that (1) implies (2) and (2) implies (3). Conversely, (3) implies, in particular, that Kim-dividing implies universal Kim-dividing, so $T$ is $\NSOP_{1}$ (by the Kim's Lemma for $\NSOP_1$ theories, Theorem~\ref{thm:kimnsop1} below). Thus $T$ is an $\NSOP_{1}$ theory in which dividing and Kim-dividing coincide over models, so $T$ is simple by~\cite[Proposition 8.4]{kaplan2017kim}.

For the reader's convenience, and to give an indication of the typical flavor of arguments relating variants of Kim's Lemma to combinatorial configurations like the tree property, we will also give a self-contained proof of the equivalence of (1) and (3). 

\begin{proof} (1)$\implies$(3).  Suppose $(3)$ fails, so there is a model $M \models T$, a formula $\varphi(x;b)$ that divides over $M$, and a global $M$-invariant type $q \supseteq \mathrm{tp}(b/M)$ such that $\varphi(x;b)$ does not divide along Morley sequences over $M$ for $q$. Let $(b_{i})_{i < \omega}$ be an $M$-indiscernible sequence in $\tp(b/M)$ such that $\{\varphi(x;b_{i}) : i < \omega\}$ is inconsistent (and hence $k$-inconsistent for some $k$). By induction, we will build for each $n < \omega$, a tree $(c_{\eta})_{\eta \in \omega^{\leq n}}$ satisfying the following:
\begin{itemize}
\item For all $\eta \in \omega^{< n}$, $(c_{\eta^{\frown}\langle i \rangle})_{i < \omega} \equiv_{M} (b_{i})_{i < \omega}$.
\item For all $\nu \in \omega^{n}$, $(c_{\nu}, c_{\nu|(n-1)}, \ldots, c_{\nu|0})$ begins a Morley sequence in $q$ over $M$. 
\end{itemize} 

For $n = 0$, we define $c_{\langle\rangle} = b$. The conditions are trivially satisfied. 

For the inductive step, we are given a tree $(c_{\eta,0})_{\eta \in \omega^{\leq n}}$. Since $c_{\langle\rangle,0}\models q|_M$, we have $c_{\langle\rangle,0} \equiv_{M} b$, and there is a sequence $(c_{\langle\rangle,i})_{i < \omega}$ beginning with $c_{\langle\rangle,0}$ such that $(c_{\langle\rangle,i})_{i < \omega} \equiv_{M} (b_{i})_{i < \omega}$. For each $i$, $c_{\langle\rangle,i} \equiv_{M} c_{\langle\rangle,0}$, so we can choose a tree $(c_{\eta,i})_{\eta \in \omega^{\leq n}}$ with root $c_{\langle\rangle,i}$ such that $(c_{\eta,0})_{\eta \in \omega^{\leq n}} \equiv_{M} (c_{\eta,i})_{\eta \in \omega^{\leq n}}$.  Let $c_{\langle\rangle}$ be a realization of $q|_{M\{c_{\eta,i} : \eta \in \omega^{\leq n},i < \omega \}}$.  Then we reindex to define a tree $(c_{\eta})_{\eta \in \omega^{\leq n+1}}$ by setting $c_{\langle i \rangle^{\frown} \eta} = c_{\eta,i}$ for all $i < \omega$ and $\eta \in \omega^{\leq n}$.   

Note that, for each $n$, the tree $(c_{\eta})_{\eta \in \omega^{\leq n}}$ that we constructed has the following properties.  First, for each $\eta \in \omega^{<n}$, $\{\varphi(x;c_{\eta ^\frown \langle i \rangle}) : i < \omega\}$ is $k$-inconsistent, by the first bullet point above.  Secondly, for all $\nu \in \omega^{n}$, $\{\varphi(x;c_{\nu | \ell}) : \ell \leq n\}$ is consistent, by the second bullet point and our assumption on $q$. By compactness, $\varphi(x;y)$ has TP, and $T$ is not simple. 

\medskip

(3)$\implies$(1) Suppose $T$ has TP witnessed by $\varphi(x;y)$, $k< \omega$, and $(a_{\eta})_{\eta \in \omega^{<\omega}}$. Fix a Skolemization $T^{\mathrm{Sk}}$ of $T$. The same data shows that $\varphi(x;y)$ has TP modulo $T^{\mathrm{Sk}}$. 

By compactness, we can obtain a tree $(a_{\eta})_{\eta \in \kappa^{<\omega}}$, where $\kappa>2^{|T|}$, and which satisfies the obvious extensions of the defining conditions of the tree property. 

We build an array $(b_{i,j})_{i,j<\omega}$ and $\rho\in \kappa^\omega$ with the following properties (in $T^{\mathrm{Sk}}$): 
\begin{itemize}
\item $b_{i,0} = a_{\rho|(i+1)}$ for all $i<\omega$ (and therefore $\{\varphi(x;b_{i,0}): i<\omega\}$ is consistent). 
\item For all $i<\omega$, $\{\varphi(x;b_{i,j}):j<\omega\}$ is $k$-inconsistent. 
\item For all $i<\omega$, $(b_{i,j})_{j<\omega}$ is indiscernible over $(b_{\ell,0})_{\ell<i}$.
\end{itemize}

We proceed by recursion on $i$. Given $\rho|n$ and $(b_{i,j})_{i<n,j<\omega}$, let $\eta = \rho|n$, and consider the sequence $(a_{\eta^\frown\langle \alpha\rangle})_{\alpha<\kappa}$. By the conditions on $\kappa$, we can find a subsequence $I = (a_{\eta^\frown \langle \alpha_j\rangle})_{j<\omega}$ such that each $a_{\eta^\frown \langle \alpha_j\rangle}$ satisfies the same complete type $p(y)$ over $(b_{i,0})_{i<n}$. Let $(b_{n,j})_{j<\omega}$ be a sequence which is indiscernible and locally based on $I$ over $(b_{i,0})_{i<n}$ (i.e. realizes the Ehrenfeucht-Mostowski type of $I$ over $(b_{i,0})_{i < n}$). It follows that each $b_{n,j}$ satisfies $p(y)$, so we can assume that $b_{n,0} = a_{\eta^\frown\langle \alpha_0\rangle}$ and let $\rho(n) = \alpha_0$. It also follows that $\{\varphi(x;b_{n,j}):j<\omega\}$ is $k$-inconsistent. This completes the construction. 

For each $i < \omega$, let $\overline{b}_{i} = (b_{i,j})_{j<\omega}$, and let $J = (\overline{b}_i)_{i<\omega}$. Let $J' = (\overline{b}'_{i})_{i < \omega + \omega}$ be a sequence which is indiscernible and locally based on $J$ (over $\emptyset$). Writing each $\overline{b}'_i$ as $(b'_{i,j})_{j<\omega}$, we retain consistency of $\{\varphi(x;b'_{i,0}): i<\omega+\omega\}$, $k$-inconsistency of $\{\varphi(x;b'_{i,j}):j<\omega\}$ for all $i<\omega+\omega$, and indiscernibility of $(b'_{i,j})_{j<\omega}$ over $(b'_{\ell,0})_{\ell<i}$ for all $i<\omega+\omega$.

Let $M$ be the Skolem hull of $(b'_{i,0})_{i<\omega}$. By indiscernibility, $\tp(b'_{\omega,0}/M(b'_{i,0})_{i>\omega})$ is finitely satisfiable in $M$ and therefore extends to a global $M$-finitely satisfiable (and therefore $M$-invariant) type $q$. Moreover, by indiscernibility, $b'_{\omega + i,0} \models q|_{M(b'_{n,0})_{n>\omega+i}}$ for all $i$, which shows that for all $n$, $(b'_{\omega+n,0},\dots,b'_{\omega,0})$ begins a Morley sequence for $q$ over $M$. By construction, $\{\varphi(x;b'_{\omega + i,0}) : i < \omega\}$ is consistent, so $\varphi(x;b'_{\omega,0})$ does not divide along Morley sequences for $q$ over $M$. However, $\varphi(x;b'_{\omega,0})$ does divide along the $M$-indiscernible sequence $\overline{b}'_{\omega}$.

Taking the reduct back to $T$, the restriction $q|_L$ of $q$ to $L$-formulas is still finitely satisfiable in $M$, Morley sequences in $q$ are also Morley sequences in $q|_L$, and the $M$-indiscernible sequence $\overline{b}'_{\omega}$ remains $M$-indiscernible in the reduct. Thus $\varphi(x;b'_{\omega,0})$ divides but does not universally Kim-divide with respect to $T$, and (3) fails.  
\end{proof}

\begin{exmp}\label{ex:eqrel}
Let $T_E$ be the theory of an equivalence relation $E$ with infinitely many classes, each of which is infinite. $T_E$ is a simple theory (in fact, it is $\omega$-stable). Let $M\models T_E$, and let $b$ be an element of $\monster$ in an equivalence class which is not represented in $M$. There are three types of $M$-indiscernible sequence $(b_i)_{i< \omega}$ in $\tp(b/M)$: (a) constant sequences, in which $b_i = b_j$ for all $i,j< \omega$, (b) sequences contained in one equivalence class, in which $b_i\neq b_j$ but $b_i E b_j$ for all $i\neq j$, and (c) sequences that move across equivalence classes, in which $\lnot b_i E b_j$ for all $i\neq j$. 

The formula $xEb$ divides along sequences of type (c), but not along sequences of type (a) or (b). Is there a general explanation for this behavior? Kim's Lemma gives the answer: the dividing formula $xEb$ universally Kim-divides, and every Morley sequence for a global $M$-invariant type extending $\tp(b/M)$ has type (c). 

Indeed, if $q(y)$ is a global $M$-invariant type extending $\tp(b/M)$, we will show that $q$ cannot contain the formula $yEc$ for any $c\in \monster$. If $cEm$ for some $m\in M$, then since $\lnot yEm\in \tp(b/M)$, $\lnot yEc\in q$. And if the equivalence class of $c$ is not represented in $M$, then letting $c'$ be another element inequivalent to $c$ whose equivalence class is not represented in $M$, $q$ cannot contain both $yEc$ and $yEc'$, but $\tp(c/M) = \tp(c'/M)$, so by invariance $q$ does not contain $yEc$. It follows that a Morley sequence for $q$ has type (c). 
\end{exmp}

Next, we turn to the Kim's Lemma characterization of $\NSOP_1$ theories. 

\begin{thm} 
[{\cite[Theorem 3.16]{kaplan2017kim}}]\label{thm:kimnsop1}
The following are equivalent: 
\begin{enumerate}
\item $T$ is $\NSOP_{1}$. 
\item For all models $M$, if a formula $\varphi(x;b)$ Kim-divides over $M$, then it universally Kim-divides over $M$. 
\end{enumerate}
\end{thm}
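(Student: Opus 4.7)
The plan is to prove the equivalence by contrapositive in both directions, with the nontrivial content lying in a tree-building argument that turns a failure of the "some implies every" principle into an $\SOP_1$-tree.

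Direction (2) $\Rightarrow$ (1) is the easier one. I would assume $T$ has $\SOP_1$, witnessed by $\varphi(x;y)$ and a tree $(a_\eta)_{\eta\in 2^{<\omega}}$, and cook up a failure of universal Kim-dividing. The first step is to replace the given tree by a \emph{strongly indiscernible} (tree-Ramsey) tree satisfying the same $\SOP_1$ conditions; this is the standard modelling-theoretic extraction. Then I would pass to a Skolemization so that I can take $M$ to be the Skolem hull of the "left spine" $(a_{0^n})_{n<\omega}$, set $b = a_{\langle 1\rangle}$ (or a similar right-sibling), and use the tree-indiscernibility to show two things simultaneously: (a) the sequence $(a_{0^n{}^\frown\langle 1\rangle})_{n<\omega}$ is $M$-indiscernible, begins a Morley sequence for a global $M$-finitely-satisfiable type $q\supseteq\tp(b/M)$, and $\{\varphi(x;a_{0^n{}^\frown\langle 1\rangle}):n<\omega\}$ is inconsistent by the $\SOP_1$ incompatibility condition, so $\varphi(x;b)$ Kim-divides via $q$; while (b) the sequence along a branch through $b$ is an $M$-indiscernible Morley sequence for a different global $M$-invariant type $q'\supseteq\tp(b/M)$ along which $\{\varphi(x;b'_i):i<\omega\}$ is consistent. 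Thus $\varphi(x;b)$ Kim-divides but not universally.

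Direction (1) $\Rightarrow$ (2) is the heart of the matter. Assume failure of (2): some $\varphi(x;b)$ Kim-divides over $M$ via a global $M$-invariant type $p\supseteq\tp(b/M)$ but does not divide along a Morley sequence of another global $M$-invariant type $q\supseteq\tp(b/M)$. Fix a Morley sequence $(b_i)_{i<\omega}$ for $p$ over $M$ with $\{\varphi(x;b_i):i<\omega\}$ $k$-inconsistent, and a Morley sequence $(b'_i)_{i<\omega}$ for $q$ over $M$ with $\{\varphi(x;b'_i):i<\omega\}$ consistent. I would then construct, by induction on $n$, a binary tree $(c_\eta)_{\eta\in 2^{\leq n}}$ such that for every $\nu\in 2^{<n}$, the sequence $(c_{\nu^\frown\langle 1\rangle^\frown\sigma})_\sigma$ enumerating the right-descendants in order "looks like" the $p$-Morley sequence $(b_i)$ over $Mc_{<\nu}$, while the "$0$-direction" extension at $\nu$ inserts a new step of the $q$-sequence. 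Concretely, the inductive step alternates two moves: an automorphism move using $M$-invariance of $q$ to glue a fresh copy of the existing tree above a new $q$-realization (producing a left-child), and an automorphism move using $M$-invariance of $p$ to ensure that the right-descendants below any given node continue to form an initial segment of a $p$-Morley sequence (producing the right-child). In the resulting tree, every initial segment along a branch through $0$'s and $1$'s is consistent (because along branches we collect realizations that are locally based on one of $p$ or $q$, both of whose Morley sequences are branch-consistent), while for $\nu^\frown\langle 0\rangle\trianglelefteq\eta$ the pair $\{\varphi(x;c_\eta),\varphi(x;c_{\nu^\frown\langle 1\rangle})\}$ is inconsistent, by the $k$-inconsistency of the $p$-Morley sequence inherited at node $\nu$. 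This is exactly an $\SOP_1$ tree.

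The main obstacle is the simultaneous invariance bookkeeping in the inductive construction for (1) $\Rightarrow$ (2): one must arrange at every step that the partial tree built so far is indexed compatibly with both a $p$-sequence (for inconsistency across "right-off-left" pairs) and a $q$-sequence (for consistency along $0$-branches), and one must ensure the correct $M$-conjugacy so that the next application of the invariant types' indiscernibility can be made without disturbing the earlier levels. A clean way to package this is to stretch the tree by compactness to a larger index set and extract an $M$-indiscernible tree whose path-type agrees with the $p$-Morley EM-type and whose sibling-type agrees with the $q$-Morley EM-type; then the $\SOP_1$ configuration can be read off directly. The direction (2) $\Rightarrow$ (1) has no real obstacle beyond invoking tree Ramsey and correctly identifying which indiscernible sub-configurations of the $\SOP_1$ tree yield the two Morley sequences.
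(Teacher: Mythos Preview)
The paper does not prove this theorem; it is quoted from \cite{kaplan2017kim} without proof, so there is no in-paper argument to compare against. That said, your sketch for $(1)\Rightarrow(2)$ has a genuine gap.

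You claim that in your tree ``every initial segment along a branch through $0$'s and $1$'s is consistent (because along branches we collect realizations that are locally based on one of $p$ or $q$, both of whose Morley sequences are branch-consistent).'' But Morley sequences for $p$ are \emph{not} consistent for $\varphi$: by hypothesis they are $k$-inconsistent. In your setup the right-descendants of any node form (an initial segment of) a $p$-Morley sequence, so the all-$1$'s branch is $k$-inconsistent and you do not get an $\SOP_1$ tree. Separately, you assert that for $\nu^\frown\langle 0\rangle\trianglelefteq\eta$ the pair $\{\varphi(x;c_\eta),\varphi(x;c_{\nu^\frown\langle 1\rangle})\}$ is inconsistent ``by the $k$-inconsistency of the $p$-Morley sequence''; but $k$-inconsistency only gives inconsistency of sets of size $k$, not of pairs, so this does not deliver the $2$-inconsistency required by the definition of $\SOP_1$.

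The standard fix (and what \cite{kaplan2017kim} actually does) is not to aim for an $\SOP_1$ tree directly but for the array characterization recorded later in this paper as Fact~\ref{fact:sop1}: an array $(c_{i,j})_{i<\omega,j<2}$ with $c_{n,0}\equiv_{(c_{i,j})_{i<n,j<2}} c_{n,1}$, the column $\{\varphi(x;c_{i,0}):i<\omega\}$ consistent, and the column $\{\varphi(x;c_{i,1}):i<\omega\}$ merely $k$-inconsistent. One builds this by induction using both invariant types at once, the key being that $p|_M=q|_M=\tp(b/M)$ together with $M$-invariance lets an automorphism at each stage align a fresh $p$-realization with the next term of a $q$-Morley sequence over everything built so far. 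The $k$-inconsistency is exactly what Fact~\ref{fact:sop1} accommodates, and no $2$-inconsistency is needed. Your $(2)\Rightarrow(1)$ sketch is in the right spirit and can be made to work with the tree-indiscernibility extraction you describe.
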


\begin{exmp}\label{ex:feq}
$T^*_{\feq}$, the generic theory of parametrized equivalence relations, is $\NSOP_1$ and has $\TP_2$. It is the complete theory of the Fra\"iss\'e limit of the Fra\"iss\'e class $\K_{\feq}$. The language has two sorts, $O$ and $P$, and one ternary relation $y E_{x} z$, where the subscript $x$ has type $P$ and $y$ and $z$ have type $O$. A finite structure $A$ is in $\K_{\feq}$ if for all $a\in P(A)$, $E_a$ defines an equivalence relation on $O(A)$.  

Let $M\models T^*_{\feq}$, let $c\in P(\monster)\setminus P(M)$, and let $b\in O(\monster)$ such that the $E_c$-class of $b$ is not represented in $O(M)$. The formula $xE_c b$ divides over $M$, along any $M$-indiscernible sequence $(b_i,c_i)_{i< \omega}$ such that $c_i = c$ for all $i$ and $\lnot b_i E_c b_j$ for all $i\neq j$. But if $p(y,z)$ is a global $M$-invariant type extending $\tp(bc/M)$ and $I = (b_i,c_i)_{i< \omega}$ is a Morley sequence for $p$, then $c_i\neq c_j$ for all $i\neq j$, and $x E_c b$ does not divide along $I$. Indeed, by compactness and the genericity properties of the Fra\"iss\'e limit, if $(c_i)_{i< \omega}$ is any sequence of pairwise distinct elements of $P(\monster)$, and $C_i$ is an $E_{c_i}$ class for each $i< \omega$, then we can find $a\in O(\monster)$ such that $a\in C_i$ for all $i<\omega$. It follows that $x E_c b$ does not Kim-divide, and hence does not universally Kim-divide, so the Kim's Lemma for simple theories fails in $T^*_\feq$. 

Now let $m\in P(M)$, and let $b'\in O(\monster)$ such that the $E_m$-class of $b'$ is not represented in $O(M)$. Then the formula $x E_m b'$ Kim-divides over $M$, and, as predicted by the Kim's Lemma for $\NSOP_1$ theories, it universally Kim-divides over $M$. Indeed, if $p(y,z)$ is any global $M$-invariant type extending $\tp(b'm/M)$, and $I = (b_i,m_i)_{i< \omega}$ is a Morley sequence for $p$, then $m_i = m$ for all $i<\omega$ and $(b_i)_{i< \omega}$ is an indiscernible sequence of type (c) for $E_m$, according to the terminology in Example~\ref{ex:eqrel}. Thus $x E_m b'$ divides along $I$. 
\end{exmp}

Finally, we turn to the Kim's Lemma characterization of $\NTP_2$ theories. 

\begin{thm}
[{\cite[Lemma 3.14]{chernikov2012forking}}, {\cite[Theorem 4.9]{ChernikovNTP2}}]\label{thm:ntp2kim}
The following are equivalent: 
\begin{enumerate}
\item $T$ is $\NTP_2$. 
\item For all models $M$, if a formula $\varphi(x;b)$ divides over $M$, then it universally Kim-strictly divides over $M$. 
\end{enumerate} 
\end{thm}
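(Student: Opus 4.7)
The plan is to deduce both directions of the equivalence from the cited Chernikov--Kaplan characterization of $\NTP_2$ in terms of \emph{strict} (rather than Kim-strict) dividing. The only ingredient beyond those results is the relationship between strict and Kim-strict $M$-invariant types: in any theory, every strict $M$-invariant type is Kim-strict $M$-invariant, since $B\ind^{f}_{M} a$ implies $B\ind^{K}_{M} a$; and in an $\NTP_2$ theory these two notions coincide over models, because $\ind^{f}_{M} = \ind^{K}_{M}$ by Fact~\ref{fact:ntp2}.

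For (1)$\implies$(2), assume $T$ is $\NTP_2$. By Fact~\ref{fact:ntp2}, Kim-forking and forking coincide over any model $M$, so a global $M$-invariant type is Kim-strict over $M$ precisely when it is strict over $M$. Therefore universal Kim-strict dividing and universal strict dividing agree in this setting, and the implication from dividing to universal strict dividing supplied by \cite[Lemma 3.14]{chernikov2012forking} gives exactly what is needed.

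For (2)$\implies$(1), suppose (2) holds. Since every strict $M$-invariant type is Kim-strict $M$-invariant, dividing along Morley sequences for every Kim-strict $M$-invariant extension of $\tp(b/M)$ is a \emph{stronger} condition than dividing along Morley sequences for every strict $M$-invariant extension. Thus (2) entails that every formula dividing over $M$ universally strictly divides over $M$, and by \cite[Theorem 4.9]{ChernikovNTP2} this suffices to conclude $T$ is $\NTP_2$. There is no substantial obstacle; the whole argument is bookkeeping to align the ``strict'' and ``Kim-strict'' versions of the notions involved, and in particular to note that the collection of Kim-strict invariant types sits between the invariant and the strict invariant types.
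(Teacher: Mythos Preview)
Your proof is correct and is essentially identical to the paper's own justification: the paper observes that Chernikov--Kaplan establish the equivalence of (1) with ``divides implies universally strictly divides,'' and then notes that Kim-strict and strict invariant types coincide in $\NTP_2$ theories (giving (1)$\implies$(2)) while universal Kim-strict dividing implies universal strict dividing in arbitrary theories (giving (2)$\implies$(1)).
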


Note that the notion of Kim-strict dividing does not appear in~\cite{chernikov2012forking} or~\cite{ChernikovNTP2}. Instead, Chernikov and Kaplan prove that (1) is equivalent to (3): 
\begin{enumerate}
\setcounter{enumi}{2}
\item For all models $M$, if a formula $\varphi(x;b)$ divides over $M$, then it universally  strictly divides over $M$. 
\end{enumerate}
But since Kim-strict invariant types coincide with strict invariant types in $\NTP_2$ theories (by Fact~\ref{fact:ntp2}), and universal Kim-strict dividing implies universal strict dividing in arbitrary theories, it follows immediately that (1), (2), and (3) are all equivalent. We have chosen to focus on Kim-strict dividing because it behaves better outside of the $\NTP_2$ context (by Theorem~\ref{thm:kimstrict} and Remark~\ref{rem:nostrict}). 

\begin{exmp}\label{ex:dlo}
$\DLO$, the theory of dense linear orders without endpoints, is $\NTP_2$ (in fact, it is $\NIP$) and has $\SOP_1$ (in fact, it has $\SOP$). Let $M\models \DLO$, and $b<c$ be two elements in $\monster\setminus M$ living in the same cut in $M$ (so there is no $m\in M$ with $b<m<c$). Now $q(y,z) = \tp(bc/M)$ has three global $M$-invariant extensions. By quantifier elimination, each is determined by the order relations between $y$ and $z$ and the elements $d\in \monster$ living in the same cut in $M$ as $b$ and $c$.  
\begin{enumerate}
\item Let $q_1$ be the global type containing $d<y<z$ for all such $d$. A Morley sequence $(b_i,c_i)_{i< \omega}$ for $q_1$ has $b_0 < c_0 < b_1 < c_1 < b_2 < c_2 < \cdots$. 
\item Let $q_2$ be the global type containing $y<z<d$ for all such $d$. A Morley sequence $(b_i,c_i)_{i< \omega}$ for $q_2$ has $\cdots <b_2 < c_2 < b_1 < c_1 < b_0 < c_0$. 
\item Let $q_3$ be the global type containing $y<d<z$ for all such $d$. A Morley sequence $(b_i,c_i)_{i< \omega}$ for $q_3$ has $\cdots < b_2 < b_1 < b_0 < c_0 < c_1 < c_2 < \cdots$.
\end{enumerate}

The formula $b < x < c$ divides along Morley sequences for $q_1$ and $q_2$, but not along Morley sequences for $q_3$. This shows that the Kim's Lemma for $\NSOP_1$ theories fails in $\DLO$: Kim-dividing does not imply universal Kim-dividing. But the Kim's Lemma for $\NTP_2$ theories explains which Morley sequences we should expect a dividing formula to divide along. Indeed, the dividing formula $b < x < c$ universally Kim-strictly divides, and we will show that $q_1$ and $q_2$ are Kim-strict, while $q_3$ is not.

Suppose $A\subseteq \monster$, and suppose $b'c'\models q_i|_{MA}$ for some $i\in \{1,2,3\}$. If $i = 1$ or $2$, then there is no $a\in A$ such that $b < a < c$, and it follows that $A\ind^{K}_M b'c'$. So $q_1$ and $q_2$ are Kim-strict. 

On the other hand, if $i = 3$, and if $A$ contains an element $a$ living in the same cut in $M$ and $b$ and $c$, then $b' < a < c'$. Thus $\tp(A/Mb'c')$ contains the Kim-dividing formula $b' < x < c'$, and $A\nind^{K}_M b'c'$. So $q_3$ is not Kim-strict. 
\end{exmp}

We can now fill in the diagram from the end of Section~\ref{sec:preliminaries} with the implications coming from the variants of Kim's Lemma which hold in various contexts, as well as our New Kim's Lemma. 
\[
\xymatrix{
\text{divides} 
\ar|{\text{simple}}[ddr] \ar|-<<<<<<<<<<<<<<<<<<<<<{\NTP_2}[ddddr] & \text{universally divides}\ar[dd]\\\\
\text{Kim-divides} \ar[uu] \ar|<<<<<<<<{\NSOP_1}[r]\ar|{\text{New Kim's Lemma}}[ddr] & \text{universally Kim-divides}\ar[dd]\\\\
\text{Kim-strictly divides} \ar[uu] 
& \text{universally Kim-strictly divides}\ar[l]}
\]

\begin{defn}\label{def:nkl}
$T$ satisfies \emph{New Kim's Lemma} if for all models $M$, if a formula $\varphi(x;b)$ Kim-divides over $M$, then it universally Kim-strictly divides over $M$. 
\end{defn}

We will give some examples and non-examples of New Kim's Lemma in the next section. For now, let us observe a simple consequence. Variants of Kim's Lemma allow us to prove that the relevant notions of forking and dividing coincide, and the usual proof works here as well. 

\begin{prop}
Suppose $T$ satisfies New Kim's Lemma and $M\models T$. Then a formula $\varphi(x;b)$ Kim-forks over $M$ if and only if it Kim-divides over $M$.
\end{prop}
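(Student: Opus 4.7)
The plan is to establish the nontrivial direction: Kim-forking implies Kim-dividing. The reverse direction is immediate, since a single Kim-dividing formula witnesses Kim-forking via a one-element disjunction.

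So assume $\varphi(x;b)\models\bigvee_{j<n}\psi_j(x;c_j)$ with each $\psi_j(x;c_j)$ Kim-dividing over $M$. The strategy is to bundle all the parameters into a single tuple and then use Theorem~\ref{thm:kimstrict} to extract a global Kim-strict $M$-invariant extension of $\tp(b,c_0,\dots,c_{n-1}/M)$; call it $q(y,z_0,\dots,z_{n-1})$. Let $(b_i,c_{0,i},\dots,c_{n-1,i})_{i<\omega}$ be a Morley sequence for $q$ over $M$. For each $j<n$, the restriction $q_j$ of $q$ to the $z_j$-variables is global and $M$-invariant, and it inherits Kim-strictness from $q$ (given $c_j\models q_j|_{MB}$, pick any realization of $q|_{MB}$ whose $j$-th component equals $c_j$ up to an $MB$-automorphism, apply Kim-strictness of $q$, then use right-monotonicity of $\ind^{K}$). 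Moreover $(c_{j,i})_{i<\omega}$ is a Morley sequence for $q_j$.

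Now comes the use of New Kim's Lemma: for each $j<n$, since $\psi_j(x;c_j)$ Kim-divides over $M$ and $q_j$ is a Kim-strict $M$-invariant extension of $\tp(c_j/M)$, the formula $\psi_j$ divides along $(c_{j,i})_{i<\omega}$, so $\{\psi_j(x;c_{j,i}):i<\omega\}$ is $k_j$-inconsistent for some $k_j<\omega$ (inconsistency upgrades to $k$-inconsistency by indiscernibility and compactness). Set $k=\max_{j<n}k_j$. Toward a contradiction, suppose $\{\varphi(x;b_i):i<\omega\}$ is consistent; then it has a realization $a$ satisfying $\varphi(a;b_i)$ for $i<nk$. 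For each such $i$, the entailment gives some $j<n$ with $\models\psi_j(a;c_{j,i})$, so by pigeonhole there exists $j$ and an index set $S$ of size at least $k\geq k_j$ on which $\psi_j(a;c_{j,i})$ holds, contradicting $k_j$-inconsistency of $\{\psi_j(x;c_{j,i})\}$. Thus $\{\varphi(x;b_i):i<\omega\}$ is inconsistent, and since $(b_i)_{i<\omega}$ is a Morley sequence for the $y$-restriction of $q$ (a global $M$-invariant type extending $\tp(b/M)$), the formula $\varphi(x;b)$ Kim-divides over $M$.

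The main subtlety to handle carefully is the one flagged in Remark~\ref{rem:Kdparameters}: we cannot assume the $c_j$ are all the same tuple, so we genuinely need to bundle them and appeal to Theorem~\ref{thm:kimstrict} for the combined tuple rather than for each $c_j$ separately. This is precisely what lets the same Morley index $i$ produce compatible $c_{j,i}$'s across all $j<n$, which is what the pigeonhole argument requires. Once that packaging is in place, the rest is the standard argument that a Kim's-Lemma-style variant forces forking and dividing to coincide at the formula level.
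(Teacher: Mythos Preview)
Your proof is correct and follows essentially the same approach as the paper: bundle the parameters, invoke Theorem~\ref{thm:kimstrict} to get a Kim-strict invariant extension of the joint type, apply New Kim's Lemma to each restriction, and finish by pigeonhole. You even make explicit the verification that the restrictions $q_j$ remain Kim-strict, which the paper leaves implicit.
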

\begin{proof}
Kim-dividing implies Kim-forking by definition. So suppose $\varphi(x;b)$ Kim-forks over $M$. Then $\varphi(x;b)\models \bigvee_{j<n} \psi_j(x;c_j)$, where each $\psi_j(x;c_j)$ Kim-divides over $M$. 

By Theorem~\ref{thm:kimstrict}, let $q(y,z_1,\dots,z_n)$ be a global Kim-strict invariant type extending $\tp(bc_0\dots c_{n-1}/M)$, and let $I = (b^i,c_0^i, \dots, c_{n-1}^i)_{i< \omega}$ be a Morley sequence for $q$ over $M$. 

For all $j< n$, $I_j = (c_j^i)_{i< \omega}$ is also a Morley sequence over $M$ for a global Kim-strict invariant type, namely the restriction of $q$ to formulas in the single variable $z_j$. By New Kim's Lemma, $\psi_j(x;c_j)$ divides along $I_j$.

Suppose for contradiction that $\varphi(x;b)$ does not divide along $I_* = (b^i)_{i<\omega}$. Then there exists $a$ satisfying $\{\varphi(x;b^i):i<\omega\}$. For each $i<\omega$, since $b^ic_0^i\dots c_{n-1}^i\equiv_M bc_0\dots c_{n-1}$, there exists $j<n$ such that $\models \psi_j(a;c^i_j)$. By the pigeonhole principle, there is some $j< n$ such that for infinitely many $i<\omega$, $a$ satisfies $\psi_j(x;c^i_j)$. This contradicts the fact that $\psi_j(x;c_j)$ divides along $I_j$. Thus $\varphi(x;b)$ divides along $I_*$. Since $I_*$ is a Morley sequence over $M$ for the restriction of $q$ to formulas in the single variable $y$, $\varphi(x;b)$ Kim-divides over $M$. 
\end{proof}

\section{Examples}\label{sec:examples}

\subsection{Parametrized linear orders}\label{sec:dlop}

In this section, we introduce the theory $\DLO_p$ of parametrized dense linear orders without endpoints, and we show that it satisfies New Kim's Lemma. The choice of this example is motivated by the examples in Section~\ref{sec:diversity}: $\DLO_p$ is to $\DLO$ (Example~\ref{ex:dlo}) as $T^*_\feq$ (Example~\ref{ex:feq}) is to $T_E$ (Example~\ref{ex:eqrel}). 

The language $L$ has two sorts, $O$ and $P$, and one ternary relation $y <_x z$, where the subscript $x$ has type $P$ and $y$ and $z$ have type $O$. For an $L$-structure $A$, we write $A_P$ and $A_O$ for the two sorts. Let $L_{<}$ be the language $\{<\}$, where $<$ is a binary relation. Given $a\in A_P$, we write $A_a$ for the $L_<$-structure $(A_O,<_a)$. 

Let $\K$ be the class of all finite structures $A$ such that for all $a\in A_P$, $<_a$ is a linear order on $A_O$. This is a special case of the parametrization construction introduced in~\cite[Section 6.3]{ArtemNick}, applied to the class of finite linear orders. By~\cite[Lemma 6.3]{ArtemNick},  $\K$ is a Fra\"iss\'e class with disjoint amalgamation. Let $\DLO_p$ be the theory of its Fra\"iss\'e limit. By disjoint amalgamation, $\DLO_p$ has trivial $\acl$. By~\cite[Lemma 6.4]{ArtemNick}, if $M\models \DLO_p$, then for all $m\in M_P$, $M_m\models \DLO$. 

If $C\subseteq \monster_O$ and $\varphi(x)$ is an $L_{<}$-formula with parameters in $C$, then, for each $m \in \monster_{P}$, we write $\varphi_{m}(x)$ for the $L$-formula obtained by replacing each instance of $<$ with $<_{m}$.  Likewise, if $q(x)$ is a partial $L_{<}$-type over $C$, we write $q_{m}(x)$ for $\{\varphi_{m}(x) : \varphi(x) \in q\}$.  Note that $q_m(x)$ is a partial $L$-type over $Cm$.  

\begin{fact}[{\cite[Lemma 6.5]{ArtemNick}}]\label{fact:DLOp}
Suppose $C\subseteq \monster_O$, $(b_i)_{i\in I}$ is a family of distinct elements of $\monster_P$, and for each $i\in I$, $p^i(x)$ is a consistent non-algebraic $L_{<}$-type over $C$ in $\monster_{b_i}$. Then $\bigcup_{i\in I} p^i_{b_{i}}(x)$ is a consistent partial $L$-type over $C(b_i)_{i\in I}$.  
\end{fact}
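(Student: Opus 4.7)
The plan is to reduce the statement to a simple one-point amalgamation inside $\K$ and then quote the extension property of the Fra\"iss\'e limit. By compactness, it suffices to establish consistency of finite subconjunctions, so I may take $I$ finite, say $I = \{0,\dots,n-1\}$, and $C$ finite. Since $\monster_{b_i} \models \DLO$ and $\DLO$ has quantifier elimination, I may complete each consistent non-algebraic partial type $p^i$ over $C$ to a complete quantifier-free non-algebraic $L_<$-type over $C$; such a completion is exactly a proper Dedekind cut of the finite linear order $(C, <_{b_i}|_C)$.

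Next, I build a one-point extension in $\K$. Let $A$ be the finite $L$-substructure with $A_O = C$, $A_P = \{b_0,\dots,b_{n-1}\}$, and each $<_{b_i}$ on $C$ inherited from $\monster$. Adjoin a single fresh point $a$ to the $O$-sort and, independently for each $i < n$, extend $<_{b_i}$ from $C$ to $C \cup \{a\}$ by inserting $a$ at the cut determined by $p^i$. The key observation is that the defining condition for $\K$ imposes no coherence between the relations $<_{b_0},\dots,<_{b_{n-1}}$; each must merely be a linear order on $A_O$. Hence the choices of placement for $a$ in the $n$ different orders may be made simultaneously, yielding a structure $A'\in \K$ with $A \subseteq A'$ and $|A'_O \setminus A_O| = 1$.

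By the extension property of the Fra\"iss\'e limit $\monster$ of $\K$, the finite structure $A'$ embeds into $\monster$ over $A$, and the image of $a$ under such an embedding realizes $p^i_{b_i}$ for every $i < n$. This witnesses consistency of $\bigcup_{i<n} p^i_{b_i}(x)$, and compactness finishes the general case. I do not expect a real obstacle: the whole proof rests on the fact that the parametrization decouples the orders $<_b$ for distinct $b \in \monster_P$, so a new point can be dropped into each cut independently. The only slightly delicate piece is the initial compactness/quantifier-elimination reduction to a single cut per $b_i$, which is routine given that each fiber $\monster_{b_i}$ is a model of $\DLO$ with trivial algebraic closure.
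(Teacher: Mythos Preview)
Your proof is correct. The paper does not supply its own proof of this statement---it is quoted as a fact from \cite{ArtemNick}---so there is nothing to compare against, but your argument (compactness to finite data, quantifier elimination in $\DLO$ to reduce each $p^i$ to a cut, independent insertion of a fresh $O$-point at the chosen cuts to build a one-point extension in $\K$, then the extension axioms of the Fra\"iss\'e limit) is the expected one. One cosmetic point: $\monster$ is a monster model of $\DLO_p$, not the countable Fra\"iss\'e limit itself, but the finite extension property is first-order and hence holds in every model, so your embedding step is unaffected.
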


Recall that a \emph{coheir sequence} over $A$ is a Morley sequence for a global type finitely satisfiable in $A$. The following lemma is a general fact that is easy and well-known.

\begin{lem}  \label{tuple lengthening}
Suppose $M \models T$ and $I = (a_{i})_{i < \omega}$ is a coheir sequence over $M$. Then given any $b$, there exists $(b_{i})_{i<\omega}$ such that $(a_{i},b_{i})_{i < \omega}$ is a coheir sequence over $M$ and $\tp(a_i b_i/M) = \tp(a_0 b/M)$ for all $i<\omega$.  
\end{lem}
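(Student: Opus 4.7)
The plan is to realize the $b_i$'s as the $y$-coordinates of a Morley sequence for a carefully chosen global type, and then transport by an automorphism. Let $p(x) \in S(\monster)$ be a global $M$-finitely satisfiable type such that $I = (a_i)_{i<\omega}$ is a Morley sequence for $p$ over $M$. The heart of the matter is to construct a global $M$-finitely satisfiable type $q(x,y) \in S_{xy}(\monster)$ that extends $\tp(a_0 b/M)$ and whose restriction to $x$ equals $p$.

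To build such a $q$, write $p = \Av(\D, \monster)$ for an ultrafilter $\D$ on $M^{|x|}$, and consider the filter $\mathcal{F}$ on $M^{|x|+|y|}$ generated by $\{\varphi(M) \times M^{|y|} : \varphi(M) \in \D\}$ together with $\{\psi(M) : \psi(x,y) \in \tp(a_0 b/M)\}$. To see $\mathcal{F}$ is proper, it suffices to check that for any $\varphi(x;m) \in p$ and any $\psi(x,y;m') \in \tp(a_0 b/M)$ with $m,m' \in M$, the sentence $\exists x\exists y\,(\varphi(x;m) \wedge \psi(x,y;m'))$ holds in $M$; but it holds in $\monster$ via $(a_0, b)$, so it holds in $M$ by elementarity. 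Extending $\mathcal{F}$ to an ultrafilter $\E$ and letting $q := \Av(\E, \monster)$ yields a global $M$-finitely satisfiable type extending $\tp(a_0 b/M)$. Writing $\pi\colon M^{|x|+|y|} \to M^{|x|}$ for the projection, $\pi_*\E$ is an ultrafilter containing $\D$, hence equal to $\D$ by maximality, which forces $q|_x = p$.

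With $q$ in hand, let $(a'_i, b'_i)_{i<\omega}$ be a Morley sequence for $q$ over $M$. Since $q|_x = p$, its $x$-projection is a Morley sequence for $p$ over $M$ and so has the same type over $M$ as $I$ (two Morley sequences for the same global $M$-invariant type realize the same type over $M$, by induction using invariance). Pick $\sigma \in \Aut(\monster/M)$ with $\sigma(a'_i) = a_i$ for all $i$, and set $b_i := \sigma(b'_i)$. Then $(a_i, b_i)_{i<\omega}$ is a Morley sequence for $\sigma(q)$, a global $M$-finitely satisfiable type, so it is a coheir sequence over $M$; and $\tp(a_i b_i/M) = \sigma(q)|_M = q|_M = \tp(a_0 b/M)$.

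The only substantive step is the construction of $q$; the rest is routine automorphism-chasing and unpacking of definitions. The main (minor) obstacle is the elementarity argument establishing properness of $\mathcal{F}$, which is the one place the interaction between the given type $p$ and the parameter $b$ is actually used.
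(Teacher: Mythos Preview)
Your overall strategy is sound and takes a somewhat different route from the paper's: you build $q(x,y)$ directly via an ultrafilter construction, while the paper passes through an $|M|^+$-saturated model $N$, invokes left extension for $\ind^u$ to obtain $a^*b^* \ind^u_M N$ with $a^*b^* \equiv_M a_0b$ and $a^* \models p|_N$, and then uses uniqueness of global $M$-invariant extensions of types over $N$ to conclude $q|_x = p$. Your route is more hands-on and self-contained; the paper's packages the same idea more abstractly (left extension for $\ind^u$ is itself proved by essentially the ultrafilter manipulation you are doing).

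That said, your properness argument for $\mathcal{F}$ has a real gap. You check only that $(\varphi(M;m) \times M^{|y|}) \cap \psi(M) \neq \emptyset$ for $M$-definable sets $\varphi(M;m)$ lying in $\D$, via the witness $(a_0,b)$ and elementarity. But $\D$ is an arbitrary ultrafilter on $M^{|x|}$, and a general $S \in \D$ need not be definable at all; your argument does not cover such $S$, and you \emph{do} need all of $\D$ among the generators in order to later conclude $\pi_*\E \supseteq \D$. (If you only include the $M$-definable sets in $\D$, then $\pi_*\E$ need not equal $\D$, and $q|_x$ need not equal $p$.) The repair is short: for any $S \in \D$ and $\psi \in \tp(a_0b/M)$, the formula $\exists y\,\psi(x,y)$ lies in $\tp(a_0/M) = p|_M$, so $(\exists y\,\psi)(M) \in \D$; hence $S \cap (\exists y\,\psi)(M) \in \D$ is nonempty, and any $a$ in this intersection extends to some $(a,b') \in (S \times M^{|y|}) \cap \psi(M)$. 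With this correction the rest of your argument goes through unchanged.
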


\begin{proof}
Suppose $(a_i)_{i<\omega}$ is a coheir sequence for the global $M$-finitely satisfiable type $p(x)$. Let $N$ be an $|M|^+$-saturated model containing $M$. Let $a^*$ realize $p|N$, so $a^*\ind^u_M N$. By left extension for $\ind^u$, we can find $b^*$ such that $\tp(a^*b^*/M) = \tp(a_0b/M)$ and such that $a^*b^*\ind^u_M N$. By saturation of $N$, $\tp(a^*b^*/N)$ has a unique global $M$-invariant extension $q(x,y)$, which is finitely satisfiable in $M$. Likewise, $p(x)\subseteq q(x,y)$, since the restriction of $q$ to formulas in context $x$ is the unique global $M$-invariant extension of $\tp(a^*/N) = p|N$. Let $(a^*_ib^*_i)_{i<\omega}$ be a Morley sequence for $q$ over $M$. Since $(a^*_i)_{i<\omega}$ and $(a_i)_{i<\omega}$ are both Morley sequences for $p$ over $M$, there is an automorphism $\sigma$ of $\monster$ over $M$ such that $\sigma(a^*_i) = a_i$ for all $i$. Let $b_i = \sigma(b^*_i)$. 
\end{proof}

\begin{lem}\label{lem:DLOp}
Let $M\models \DLO_p$. Then $A\ind^{Kd}_M B$ if and only if: 
\begin{enumerate}
\item $A\cap B\subseteq M$, and 
\item for every $m\in M_P$, and for all $b<_ma <_m b'$ with $a\in A_O\setminus M_O$ and $b,b'\in B_O\setminus M_O$, there exists $m'\in M_O$ such that $b<_m m'<_m b'$ (i.e., $b$ and $b'$ live in different $<_m$-cuts in $M_O$).
\end{enumerate}
\end{lem}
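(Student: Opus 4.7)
I prove the forward direction by contrapositive. If (1) fails, pick $a \in (A \cap B) \setminus M$; since $\DLO_p$ has trivial algebraic closure, $\tp(a/M)$ is non-algebraic, so any Morley sequence for a global $M$-invariant extension of $\tp(a/M)$ consists of pairwise distinct elements, showing that the formula $x = a$ (which lies in $\tp(A/MB)$) Kim-divides over $M$. If (2) fails, pick witnesses $m \in M_P$, $b, b' \in B_O \setminus M_O$ in the same $<_m$-cut in $M_O$, and $a \in A_O \setminus M_O$ with $b <_m a <_m b'$. Using Fact~\ref{fact:DLOp}, I construct a global $M$-invariant type $q(y, y')$ extending $\tp(bb'/M)$ by prescribing, for each $m_0 \in M_P$, a consistent non-algebraic $L_<$-type for $(y, y')$ over $M_O$ in the $<_{m_0}$-reduct; for $m_0 = m$, I take the $L_<$-type that pushes $(y, y')$ strictly to the top of their common $<_m$-cut in $M_O$. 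A Morley sequence $(b_i, b'_i)_{i < \omega}$ for $q$ then satisfies $b_0 <_m b'_0 <_m b_1 <_m b'_1 <_m \cdots$, making $\{b_i <_m x <_m b'_i : i < \omega\}$ $2$-inconsistent, so $b <_m x <_m b'$ (which is in $\tp(A/MB)$, satisfied by $a$) Kim-divides over $M$.

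For the backward direction, assume (1) and (2). Fix a formula $\varphi(x; \bar c) \in \tp(\bar a/M\bar c)$ with $\bar a$ from $A$ and $\bar c$ from $MB$, a global $M$-invariant extension $q(\bar y)$ of $\tp(\bar c/M)$, and a Morley sequence $(\bar c^i)_{i < \omega}$ for $q$. I prove the stronger statement that the partial type $\Pi(x) = \bigcup_{i < \omega} \{\psi(x; \bar c^i) : \psi(\bar a; \bar c) \text{ holds}, \psi \text{ over } M\}$ is consistent; this implies consistency of $\{\varphi(x; \bar c^i) : i < \omega\}$ and hence $\varphi$ does not Kim-divide. By compactness and $M$-indiscernibility of the Morley sequence, it suffices to find, for each $n < \omega$, a tuple $\bar a^*$ with $\bar a^* \bar c^i \equiv_M \bar a \bar c$ for all $i < n$.

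To construct $\bar a^*$, I use quantifier elimination in $\DLO_p$ together with Fact~\ref{fact:DLOp} (applied iteratively to handle multiple $O$-sort coordinates) and genericity of the Fra\"iss\'e limit. Condition (1) takes care of forced equalities: if $\tp(\bar a\bar c/M)$ contains $a_j = c_k$, then $c_k \in A \cap B \subseteq M$, so $c^i_k = c_k$ is constant along the Morley sequence, and setting $a^*_j = c_k$ causes no problem. For each $O$-sort coordinate of $\bar a^*$, Fact~\ref{fact:DLOp} reduces realization to specifying, for each parameter $p \in M_P \cup \bigcup_{i < n} \bar c^i_P$, a consistent non-algebraic $L_<$-type in the $<_p$-reduct over $M_O \cup \bigcup_{i < n} \bar c^i_O$. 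For each $P$-sort coordinate of $\bar a^*$, genericity of the Fra\"iss\'e limit supplies a realization whose induced linear order on $O$ extends the linear order on $M_O \cup \bar c^i_O$ forced by $\tp(\bar a\bar c/M)$ for each $i < n$; the cross-comparisons $(\bar c^i_O, \bar c^{i'}_O)$ for $i \ne i'$ are unforced and may be filled in by any linear extension.

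The main obstacle is verifying, for each $p \in M_P$, that the $L_<$-type prescribed for an $O$-coordinate $a^*_j$ in the $<_p$-reduct is consistent. The constraints force $a^*_j$ to lie in the same $<_p$-cut in $M_O$ as $a_j$ and to be in the same $<_p$-relation with each $c^i_k$ as $a_j$ is with $c_k$. When $a_j$ and $c_k$ lie in different $<_p$-cuts in $M_O$, the constraints are automatically coherent across all $i < n$, since each $c^i_k$ sits in the same $<_p$-cut as $c_k$. The dangerous case is when $a_j$ sits strictly between two $B_O$-coordinates $c_k, c_{k'}$ in the same $<_p$-cut --- precisely the configuration forbidden by condition (2) --- since then the intervals $(c^i_k, c^i_{k'})_{<_p}$ for $i < n$ can be made pairwise disjoint along the Morley sequence, leaving no room in the cut for $a^*_j$. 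With (2) ruling this out, a density/compactness argument in $(\monster_O, <_p) \models \DLO$ produces $a^*_j$ in the prescribed cut with all the required $<_p$-relations to the $c^i_k$'s simultaneously.
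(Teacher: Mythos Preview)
Your overall architecture matches the paper's: contrapositive for the forward direction via an explicit Kim-dividing witness, and for the backward direction, realizing $\bigcup_i \tp(A/MB_i)$ by handling the $O$-coordinates via Fact~\ref{fact:DLOp} (checking consistency of the $<_p$-constraints using (2) when $p\in M_P$) and then the $P$-coordinates via amalgamation of linear orders. The backward direction is essentially the paper's argument, only more informal.

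The forward direction has a real gap in the construction of $q(y,y')$. What you describe---prescribing, for each $m_0\in M_P$, an $L_<$-type for $(y,y')$ over $M_O$---yields only a complete type over $M$, not a global $M$-invariant type, and Fact~\ref{fact:DLOp} does not manufacture global invariant types. But a Morley sequence is defined only for a global invariant type, and the key step $b'_0 <_m b_1$ requires $q$ to place $(y,y')$ above \emph{every} element of the common $<_m$-cut in $\monster_O$, not just in $M_O$. You must also specify $<_p$-behavior for $p\in \monster_P\setminus M_P$ and check $M$-invariance. The paper solves this by building a \emph{coheir}: it does a case split on whether $\{c\in M_O: c<_m b\}$ has no greatest element (or dually), then constructs an ultrafilter $\D$ on $M_O^2$ whose average is a global $M$-finitely satisfiable type with the required $<_m$-behavior; finally it applies Lemma~\ref{tuple lengthening} to lengthen the coheir sequence from $bb'$ to all of $B$. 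Your approach can be salvaged---one can directly write down a global $M$-invariant type that is ``top of the cut'' in the $<_m$-reduct and arbitrary-but-invariant elsewhere---but this requires more than what you wrote, including the case analysis on the cut shape. (Incidentally, the tuple-lengthening step the paper performs is not strictly needed: once $b<_m x<_m b'$ Kim-divides over $M$ as a formula with parameters $b,b'$, it already lies in $\tp(A/MB)$ and witnesses $A\nind^{Kd}_M B$.)
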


Condition (2) in the statement of Lemma~\ref{lem:DLOp} can be more succinctly stated as: for every $m\in M_P$, $A_O\ind^f_{M_O} B_O$ in the $L_{<}$-structure $\monster_m$. Nevertheless, we will prove and use the more concrete characterization. 

\begin{proof}
Suppose $A\ind^{Kd}_M B$. In any theory, $A\ind^{Kd}_M B$ implies $A\cap B\subseteq M$, so we have (1). For (2), assume for contradiction that $b<_m a <_m b'$, with $m\in M_P$, $a\in A_O\setminus M_O$, and $b,b'\in B_O\setminus M_O$, and $b$ and $b'$ live in the same $<_m$-cut in $M_O$, i.e., there is no $m'\in M_O$ such that $b<_m m' <_m b'$. 

We will find a global type $q(y,y')$ extending $\tp(bb'/M)$ and finitely satisfiable in $M$, such that the formula $\varphi(x;b,b')\colon b <_m x <_m b'$ divides along Morley sequences for $q$ over $M$. We may assume that the set $C = \{c\in M_O\mid c <_m b\}$ is non-empty and has no greatest element. The other case, when the set $D = \{d\in M_O\mid  b<_m d\}$ is non-empty  and has no least element, is symmetrical. 

Consider the filter on $M_O^{yy'}$ generated by \[\{\psi(M): \psi(y,y')\in \tp(bb'/M)\}\cup \{(e,e')\mid e'\in C\}.\] By quantifier elimination, a set $Y$ in this filter contains the intersection of:
\begin{enumerate}
\item $\{(e,e')\mid e'\in C\}$,
\item A set $\{(e,e')\mid c <_m e <_m e' <_m d\}$ for some $c\in C$ and $d\notin C$, or $\{(e,e')\mid c <_m e <_m e'\}$ for some $c\in C$, and
\item finitely many non-empty sets in $M_O^{yy'}$, each defined in terms of an order $<_{m'}$ for $m'\neq m$ in $M_P$. 
\end{enumerate}
Since $C$ has no greatest element, we can pick some $c'\in C$ with $c<_m c'$. Then replacing (1) and (2) in the intersection with $\{(e,e')\mid c <_m e <_m e' <_m c'\}$, the intersection of these sets is non-empty, by the extension axioms for the Fra\"iss\'e limit, and contained in $Y$. Thus the filter is proper and extends to an ultrafilter $\D$. 

Let $q = \Av(\D,\monster)$. Suppose $I = (b_i,b'_i)_{i< \omega}$ is a Morley sequence for $q$ over $M$. Since each $b_i$ realizes $\tp(b/M)$, \[\{(e,e')\in M_O^{yy'} \mid e' <_m b_i\} = \{(e,e')\in M_O^{yy'}\mid e'\in C\}\in \D.\]
So $b_{i+1} <_m b_{i+1}' <_m b_i$ for all $i< \omega$. Thus the $<_m$-intervals $(b_i,b'_i)$ are pairwise disjoint, and $b<_m x <_m b'$ divides along $I$. 

Let $b^*$ be a tuple enumerating $B\setminus \{b,b'\}$. By Lemma~\ref{tuple lengthening}, there exists $(b^*_i)_{i< \omega}$ such that $J = (b_i,b'_i,b^*_i)_{i< \omega}$ is a coheir sequence over $M$ and $\tp(b_i,b'_i,b^*_i) = \tp(b,b',b_*) = \tp(B/M)$ for all $i<\omega$. The formula $b<_m x <_m b'$ is contained in $\tp(A/MB)$ and divides along $J$, which contradicts $A\ind^{Kd}_M B$. 

\medskip

Conversely, suppose conditions (1) and (2) hold. We may assume that $A$ is disjoint from $M$ (and hence also from $B$, by (1)), since $(A\setminus M)\ind^{Kd}_M B$ implies $A\ind^{Kd}_M B$. 

Let $p(x,x',y) = \tp(AB/M)$, where $x$ enumerates $A_O$, $x'$ enumerates $A_P$, and $y$ enumerates $B$. Let $(B_i)_{i< \omega}$ be a Morley sequence for a global $M$-invariant type extending $\tp(B/M)$. Let $C = M\cup \bigcup_{i< \omega} B_i$. It suffices to show that $q(x,x') = \bigcup_{i< \omega} p(x,x',B_i)$ is a consistent partial type over $C$. 
 
Let $q_O(x)$ be the subset of $q$ that only mentions the variables $x$ (those of type $O$). For each $c\in C_P$, let $q_c(x)$ be set of atomic and negated atomic formulas in $q_O(x)$ involving the relation $<_c$. Then there is a partial $L_{<}$-type $q^c(x)$ over $C_O$ in $\monster_c$ such that $(q^c)_c$ is equivalent to $q_c$. We will show that each $q^c(x)$ is consistent. 

If $c\notin M$, then since the $B_i$ are pairwise disjoint over $M$, there is a unique $i< \omega$ such that $c\in (B_i)_P$. Then $q_c(x)$ is contained in $p(x,x',B_i)$, which is consistent, and hence $q^c(x)$ is consistent as well.

Suppose $c\in M$, and assume for contradiction that $q^c(x)$ is inconsistent. By compactness and density of $\monster_c$, there is some variable $z$ from $x$ and some $b_i\in B_i$ and $b'_j\in B_j$ for $i,j< \omega$ such that $b_i \leq b'_j$ in $\monster_c$, but $q^c(x)$ entails $b'_j < z < b_i$. Let $b$ and $b'$ be the elements of $B$ corresponding to $b_i$ and $b'_j$, respectively, and let $a$ be the element of $A$ corresponding to the variable $z$. Then $b' < a < b$, so by (2) there is some $m'\in M_O$ such that $b' < m' < b$ in $\monster_c$. But since $B_i\equiv_M B_j\equiv_M B$, $b_j'< m' < b_i$, contradicting $b_i \leq b'_j$. 

Since $A$ is disjoint from $B$, each type $q^c(x)$ is non-algebraic, so by Fact~\ref{fact:DLOp}, $\bigcup_{c\in C_P} q_c(x)$ is consistent. By quantifier elimination, $q_O(x)$ is consistent. 

Let $A'$ realize $q_O(x)$. It remains to show that $q(A',x')$ is consistent. Each variable in $x'$ is of type $P$. Since each atomic formula contains at most one variable of type $P$, it suffices to show that for each variable $z$ in $x'$, the set $r(z)$ of all atomic and negated atomic formulas from $q(A',x')$ involving the relation $<_z$ is consistent. 

The type $r(z)$ specifies a linear order on $A'M_O$, which extends to a linear order on $A'M_O(B_i)_O$ for all $i< \omega$. Using the amalgamation property for linear orders, we can find a linear order on $A'C_O$ extending each of the given linear orders. By compactness and the extension axioms for the Fra\"iss\'e limit, we can find $c\in \monster_P$ such that $<_c$ induces this linear order on $A'C_O$. This completes the proof. 
\end{proof}

\begin{thm}
$\DLO_p$ satisfies New Kim's Lemma. 
\end{thm}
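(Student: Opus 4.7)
The plan is to fix a Kim-strict global $M$-invariant type $q(y)$ extending $\tp(b/M)$ with Morley sequence $(b_i)_{i<\omega}$ and derive a contradiction from the assumption that some $a$ realizes $\{\varphi(x;b_i):i<\omega\}$. First I would apply Ramsey and compactness to replace $(b_i)_{i<\omega}$ by an $Ma$-indiscernible sequence of the same EM-type over $M$; since $q$ is $M$-invariant, the property of being Morley for $q$ is controlled by the EM-type over $M$, so the new sequence is still Morley for $q$ and $a$ still satisfies $\{\varphi(x;b_i):i<\omega\}$. We may therefore assume $(b_i)_{i<\omega}$ is already $Ma$-indiscernible, which gives automatic pigeonhole on witnesses.

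Since $\varphi(x;b_i) \in \tp(a/Mb_i)$ Kim-divides over $M$, $a \nind^{Kd}_M b_i$ for every $i$, and Lemma~\ref{lem:DLOp} supplies one of two forms of witness (the same for all $i$ by $Ma$-indiscernibility): either (i) indices $(j,k)$ with $a_j = b_{i,k} \notin M$, or (ii) indices $(j,k,l)$ and an $m \in M_P$ such that $b_{i,k} <_m a_j <_m b_{i,l}$ with $b_{i,k}, b_{i,l}$ in the same $<_m$-cut $C$ of $M_O$. In case (i), the $k$-th coordinate of the Morley sequence is constantly $a_j$, which forces $y_k = a_j \in q$; by $M$-invariance, $y_k = d \in q$ for every $d \equiv_M a_j$, and triviality of $\acl$ in $\DLO_p$ supplies such a $d \neq a_j$, a contradiction.

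Case (ii) is the main obstacle. Writing $I_i := (b_{i,k}, b_{i,l})$, we have $a_j \in I_i$ for every $i$. Kim-strictness yields $b_i \ind^K_M b_{j'}$ (hence $b_i \ind^{Kd}_M b_{j'}$) for $i<j'$, and Lemma~\ref{lem:DLOp} then forbids any element of $b_i$ from lying strictly inside $I_{j'}$. Using $M$-indiscernibility of $(b_i)$, the only remaining possibilities for $I_i$ versus $I_{j'}$ are: $I_i$ below $I_{j'}$, $I_i$ above $I_{j'}$, or $I_i$ encloses $I_{j'}$. To rule out the enclosure case I would observe that enclosure places $b_{i,k} <_m y_k$ and $y_l <_m b_{i,l}$ in $q$, so by $M$-invariance $d_k <_m y_k$ and $y_l <_m d_l$ lie in $q$ for every $d \equiv_M b$; the extension axioms of the Fra\"iss\'e limit then furnish $d, d' \equiv_M b$ in cut $C$ whose intervals $(d_k,d_l)$ and $(d'_k, d'_l)$ are disjoint, contradicting consistency of $q$. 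Hence $I_i$ and $I_{j'}$ are disjoint for all $i<j'$, contradicting $a_j \in I_0 \cap I_1$.
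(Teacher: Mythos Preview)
Your proof is correct and follows essentially the same strategy as the paper's: pass to an $Ma$-indiscernible Morley sequence, then use Lemma~\ref{lem:DLOp} to derive a contradiction from $a\nind^{Kd}_M b_0$. Your case (i) is handled just as in the paper (though phrased a bit differently).

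The one difference worth noting is your treatment of the enclosure sub-case in (ii). You use only the direction $b_i\ind^{Kd}_M b_{j'}$ coming from Kim-strictness, and then rule out enclosure by a separate argument exploiting $M$-invariance of $q$ and the extension axioms. The paper instead observes that the Morley sequence property already gives $b_{j'}\ind^{i}_M b_i$, hence $b_{j'}\ind^{Kd}_M b_i$, for free. Applying Lemma~\ref{lem:DLOp} in \emph{both} directions shows that neither endpoint of $I_i$ lies in $I_{j'}$ and neither endpoint of $I_{j'}$ lies in $I_i$; together with distinctness (from $B_0\cap B_1\subseteq M$), this forces the intervals to be disjoint immediately, with no enclosure case to consider. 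Your argument works, but the paper's is shorter and avoids the appeal to the extension axioms.
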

\begin{proof}
Suppose $\varphi(x;b)$ Kim-divides over $M\models T$. To show that $\varphi(x;b)$ universally Kim-strictly divides, let $p(y)$ be a global Kim-strict $M$-invariant type extending $\tp(b/M)$, and let $I = (b_n)_{n< \omega}$ be a Morley sequence for $p$. Suppose for contradiction that $\{\varphi(x;b_n):n<\omega\}$ is consistent, realized by $a$. By Ramsey's theorem, compactness, and an automorphism, we may assume that $(b_n)_{n<\omega}$ is $Ma$-indiscernible. Now it suffices to show that $a\ind^{Kd}_M b_0$, since this will contradict the fact that the Kim-dividing formula $\varphi(x;b_0)$ is in $\tp(a/Mb_0)$. 

Let $A$ be the set enumerated by $a$, and let $B_n$ be the set enumerated by $b_n$ for all $n$. Since $B_1\ind^i_M B_0$, $B_1\cap B_0\subseteq M$. If $c\in A\cap B_0$, then since $AB_0\equiv_M AB_1$, also $c\in B_1$, so $c\in M$. Thus $A\cap B_0\subseteq M$.  

Now suppose $m\in M_P$ and $d_0<_m c<_m d_0'$, with $c\in A_O\setminus M_O$ and $d_0,d_0'\in (B_0)_O\setminus M_O$. Suppose for contradiction that there is no $m'\in M_O$ such that $d_0<_m m' <_m d_0'$. Let $d_1$ and $d_1'$ be the elements of $(B_1)_O$ corresponding to $d_0$ and $d_0'$ in $(B_0)_O$. Since $B_1\equiv_{MA} B_0$, $d_1<_m c <_m d_1'$, and there is no $m'\in M_O$ such that $d_1<_m m' <_m d_1'$. 

Since $p$ is Kim-strict, $B_0\ind^{Kd}_M B_1$ and $B_1\ind^{Kd}_M B_0$. By Lemma~\ref{lem:DLOp}, $d_0$, $d_0'$, $d_1$, and $d_1'$ are distinct,  neither $d_0$ nor $d_0'$ are in the $<_m$-interval $(d_1,d_1')$, and neither $d_1$ nor $d_1'$ are in the $<_m$-interval $(d_0,d_0')$. It follows that the $<_m$-intervals $(d_0,d_0')$ and $(d_1,d_1')$ are disjoint. This contradicts the fact that $c$ is in both of them. 

So there is $m'\in M_O$ such that $d_0<_m m' <_m d_0'$. By Lemma~\ref{lem:DLOp}, $A\ind^{Kd}_M B_0$. 
\end{proof}

\subsection{Bilinear forms over real closed fields}

Let $T^{\RCF}_{\infty}$ be the two-sorted theory of an infinite-dimensional vector space over a real closed field with a bilinear form, which is assumed to be either alternating and non-degenerate, or symmetric and positive-definite. This is really two theories, one for each type of bilinear form, but our arguments are identical in both cases so we will not notationally distinguish them.  The language has a sort $V$ for the vector space, equipped with the language of abelian groups, a sort $R$ for the real closed field of scalars, equipped with the language of ordered rings, a function symbol $\cdot\colon R\times V\to V$ for scalar multiplication, and a function symbol $[-,-]\colon V\times V\to R$ for the bilinear form. 

By \cite{Granger}, $T^{\RCF}_{\infty}$ is the model companion of the theory of a vector space over a real closed field with an alternating (or symmetric and positive-definite) bilinear form. By \cite{JAN}, this theory additionally has quantifier-elimination in an expanded language, containing, for each $n$, a predicate $I_n$ on $V^{n}$, such that $I_n(v_1,\dots,v_n)$ holds if and only if $v_{1}, \ldots, v_{n}$ are linearly independent, as well as $(n+1)$-ary ``coordinate functions'' $F_{n,i} : V^{n+1} \to R$ for each $1 \leq i \leq n$.  These functions are which are interpreted so that, if $v_{1}, \ldots, v_{n}$ are linearly independent and $w = \sum_{i = 1}^{n} \alpha_{i} v_{i}$, then $F_{n,i}(\overline{v},w) = \alpha_{i}$, and $F_{n,i}(\overline{v},w) = 0$ otherwise.

When $A$ is a subset of $\monster\models T^\RCF_\infty$, we write $A_R$ for the elements of the field sort and $A_V$ for the elements of the vector space sort. 

\begin{rem}\label{rem:stablyembedded}
As a consequence of quantifier elimination and elementary linear algebra, the field sort $R$ is stably embedded. More precisely, suppose $C$ is a substructure of $\monster$. If $\varphi(x)$ is a formula with parameters from $C$ such that every variable is in the field sort $R$, then $\varphi(x)$ is equivalent to a formula $\psi(x)$ in the language of ordered rings with parameters from $C_R$. Consequently, for any tuple $a$ from $\monster_R$ and any substructure $C$, $\tp_{\RCF}(a/C_R)$ entails $\tp(a/C)$. 
\end{rem}

If $W$ is a set of vectors, we write $\langle W\rangle$ for the linear span of $W$ with scalars from the field $\monster_R$ (so $\langle W\rangle$ is a large set). By $\dim(W)$, we mean the dimension of $\langle W\rangle$ as a vector space over $\monster_R$. 

Suppose $A$, $B$, and $C$ are substructures of $\monster$. We write $A \ind^{\RCF}_{C} B $ to mean that $A_R$ and $B_R$ are forking-independent over $C_R$ in the reduct of $\monster_R$ to a model of $\RCF$. We write $A\ind^V_C B$ to mean $\langle A_V\rangle \cap \langle B_V\rangle \subseteq \langle C_V\rangle$. Our goal is to show that $T_\infty^\RCF$ satisfies New Kim's Lemma, which will involve characterizing $\ind^{Kd}$ in this theory in terms of $\ind^\RCF$ and $\ind^V$. The argument is the analogue of \cite[Proposition 9.37]{kaplan2017kim} (incorporating the corrections of \cite[Proposition 8.12]{JAN}).  A similar characterization of Kim-independence in the theory of a bilinear form on a vector space over an NSOP$_{1}$ field occurs in \cite{bossut2023note}.  

We begin with another general lemma which, in conjunction with Lemma~\ref{tuple lengthening}, will allow us to upgrade a coheir sequence in $\tp_\RCF(B_R/M_R)$ in $\RCF$ to a coheir sequence in $\tp(B/M)$ in $T^\RCF_\infty$. 

\begin{lem}\label{coheir expansion}
Suppose $L \subseteq L'$ are languages, $T'$ is an $L'$-theory and $T = T' \upharpoonright L$.  If $A \subseteq B$ and $I = (c_{i})_{i < \omega}$ is a coheir sequence over $A$ in $T$, then there is $I' \models \text{tp}_{L}(I/A)$ which is a coheir sequence in $T'$ over $B$.  
\end{lem}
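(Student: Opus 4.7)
The plan is to exploit the ultrafilter presentation of coheirs (Remark~\ref{rem:coheir}) to lift the global $L$-type witnessing $I$ as a coheir sequence to a single global $L'$-type which is still finitely satisfiable in $A$, and then to read off $I'$ as a Morley sequence of this lift over $B$ in $T'$.

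First, since $I$ is a coheir sequence over $A$ in $T$, by definition there is a global $L$-type $q(x) \in S_L(\monster)$ finitely satisfiable in $A$ such that $I$ is a Morley sequence for $q$ over $A$. Next, I would lift $q$ to an $L'$-type as follows: the family $\mathcal{F}_0 = \{\varphi(A^x) : \varphi(x) \in q\}$ of subsets of $A^x$, where $\varphi(A^x) = \{a \in A^x : \monster \models \varphi(a)\}$, has the finite intersection property by finite satisfiability of $q$ in $A$. Extending $\mathcal{F}_0$ to an ultrafilter $\D$ on $\Power(A^x)$ and setting $q' = \Av_{L'}(\D, \monster)$ gives a global $L'$-type finitely satisfiable in $A$. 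Completeness of $q$ ensures that $q' \upharpoonright L = q$: for any $L$-formula $\varphi(x;b)$ with $b \in \monster$, either $\varphi(x;b) \in q$ or $\neg\varphi(x;b) \in q$, and correspondingly $\varphi(A^x;b)$ or its complement lies in $\mathcal{F}_0 \subseteq \D$.

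Finally, I would take $I' = (c'_i)_{i<\omega}$ to be a Morley sequence for $q'$ over $B$ in $T'$. Since $A \subseteq B$, $q'$ is finitely satisfiable in $B$, so $I'$ is a coheir sequence over $B$ in $T'$ by definition. Restricting parameters to $A$ and the language to $L$, $c'_i \models q \upharpoonright A c'_{<i}$ for each $i$, so $I'$ is also a Morley sequence for $q$ over $A$ in $T$. Since the $L$-type over $A$ of a Morley sequence for an $A$-invariant global type is determined by that type alone (by a straightforward induction using $A$-invariance), we conclude $I' \models \tp_L(I/A)$.

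I do not anticipate any serious obstacle here; the only checkpoint worth pausing at is the equality $q' \upharpoonright L = q$, which rests on completeness of $q$ to guarantee that $\mathcal{F}_0$ already decides every $L$-formula through its solution set in $A^x$. Everything else is the standard translation between coheir sequences, global finitely-satisfiable types, and ultrafilters on $A^x$.
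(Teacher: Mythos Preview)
Your proof is correct and follows essentially the same approach as the paper: both pass through an ultrafilter $\D$ on $A^x$ and take $I'$ to be a Morley sequence for $\Av_{L'}(\D,\monster)$ over $B$. The paper obtains $\D$ by stretching $I$ to length $\omega+1$ and using $\tp(c_\omega/Ac_{<\omega})$, and then superfluously pushes $\D$ forward to an ultrafilter $\E$ on $B^n$ (superfluous because $\Av_{L'}(\E,\monster) = \Av_{L'}(\D,\monster)$); your route via the given global type $q$ is slightly more direct, and your explicit verification that $q' \upharpoonright L = q$ is a detail the paper omits.
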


\begin{proof}
If $(c_{i})_{i < \omega}$ is a coheir sequence over $A$ in $T$, there is some ultrafilter $\D$ on $A^{n}$, where $n$ is the length of $c_{0}$, such that $I$ is a Morley sequence over $A$ in the global $A$-finitely satisfiable type $\Av_{L}(\D, \monster)$. To see this, stretch $I$ to $(c_{i})_{i < \omega + 1}$ and observe that the family of sets $\{\varphi(A;c_{<\omega}) : \varphi(x;c_{<\omega}) \in \mathrm{tp}(c_{\omega}/Ac_{<\omega})\} \subseteq \Power(A^{n})$ generates a filter and hence extends to an ultrafilter $\D$.  It is easily checked that this $\D$ works.  Let $\E$ be the ultrafilter on $B^{n}$ induced by $\D$, i.e. a subset $X \subseteq B^{n}$ satisfies $X \in \E$ if and only if $X \cap A^{n} \in \D$.  Then we can take $I'$ to be Morley over $B$ in the global $B$-finitely satisfiable type $\Av_{L'}(\E, \monster)$. 
\end{proof}

\begin{lem}\label{lem:kimrcf}
If $M \models T^{\RCF}_{\infty}$ and $A\ind^{Kd}_M B$, then $A\ind^\RCF_M B$. 
\end{lem}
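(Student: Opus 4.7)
The plan is to argue by contrapositive. Assume $A \nind^{\RCF}_M B$, so that $A_R \nind^{\RCF}_{M_R} B_R$ in the reduct of $\monster_R$ to $\RCF$. Since $\RCF$ is $\NIP$ (hence $\NTP_2$), forking and dividing coincide over the model $M_R$, so there is an $\RCF$-formula $\varphi_0(x;b_0) \in \tp_\RCF(A_R/M_R B_R)$ that divides over $M_R$; by absorbing any $M_R$-parameters into the formula we may assume $b_0$ is a tuple from $B_R$. A standard fact (valid in any theory) is that dividing over a model can be witnessed by a coheir sequence, so we obtain a Morley sequence $(b_0^i)_{i<\omega}$ for some global $M_R$-finitely satisfiable $\RCF$-type extending $\tp_\RCF(b_0/M_R)$ such that $\{\varphi_0(x;b_0^i) : i<\omega\}$ is $\RCF$-inconsistent.

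The next step is to lift this $\RCF$-witness up to $T^{\RCF}_{\infty}$. I would apply Lemma \ref{coheir expansion}, with $L$ the $\RCF$-language on the field sort, $L'$ the full language of $T^{\RCF}_{\infty}$, $A = M_R$, and $B = M$, to obtain a coheir sequence $(\hat b_0^i)_{i<\omega}$ over $M$ in $T^{\RCF}_{\infty}$ realizing the same $\RCF$-type over $M_R$ as $(b_0^i)_{i<\omega}$. By stable embeddedness of the field sort (Remark \ref{rem:stablyembedded}), the $\RCF$-type over $M_R$ determines the full type over $M$, so $\hat b_0^0 \equiv_M b_0$ in $T^{\RCF}_{\infty}$, and after an automorphism over $M$ we may take $\hat b_0^0 = b_0$; the inconsistency of $\{\varphi_0(x;\hat b_0^i)\}$ is preserved, since it is an $\RCF$-property of the tuples. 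Applying Lemma \ref{tuple lengthening} to $(\hat b_0^i)_{i<\omega}$ and the tuple $B$ then produces $(B_i)_{i<\omega}$ such that $(\hat b_0^i, B_i)_{i<\omega}$ is a coheir over $M$ in $T^{\RCF}_{\infty}$ with $(\hat b_0^i, B_i) \equiv_M (b_0, B)$ for every $i$. A further automorphism arranges $B_0 = B$, reshuffling the $\hat b_0^i$ for $i \geq 1$ but keeping each $\hat b_0^i$ equal to the subtuple of $B_i$ corresponding to $b_0 \subseteq B$.

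At this point $(B_i)_{i<\omega}$ is a Morley sequence over $M$ for a global $M$-finitely satisfiable (hence $M$-invariant) type extending $\tp(B/M)$ with $B_0 = B$, and $\varphi_0(x;b_0)$ is a formula in $\tp(A/MB)$ that divides along this sequence via its $b_0$-subtuples. Hence $\varphi_0(x;b_0)$ Kim-divides over $M$, giving $A \nind^{Kd}_M B$ and contradicting the hypothesis. The main technical obstacle is managing the two-sorted setup: the forking data is produced in the reduct $\RCF$, but Kim-dividing must be detected in the richer theory $T^{\RCF}_{\infty}$. Stable embeddedness of the field sort, together with Lemmas \ref{coheir expansion} and \ref{tuple lengthening}, are precisely the tools that let the $\RCF$-witness be transferred up to $T^{\RCF}_{\infty}$ and extended to a coheir of $B$-tuples without losing the inconsistency that witnesses dividing.
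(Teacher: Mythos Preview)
Your proof is essentially the same as the paper's: contrapositive, find an $\RCF$-dividing formula, witness it by a coheir sequence, then use Lemmas~\ref{coheir expansion} and~\ref{tuple lengthening} to lift to a coheir sequence in $\tp(B/M)$ in $T^{\RCF}_\infty$. The only difference is order: the paper first extends to a coheir in $\tp_{\RCF}(B_R/M_R)$ and then lifts, while you lift $b_0$ first and then extend to $B$ in the full theory; both work.

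One correction: your parenthetical ``(valid in any theory)'' for the claim that dividing over a model can be witnessed by a coheir sequence is false. This is precisely an $\NTP_2$ fact (it is \cite[Lemma 3.12]{chernikov2012forking}, which the paper cites). In an arbitrary theory a formula may divide without Kim-dividing, let alone dividing along a coheir sequence. Your argument is unaffected, since you already noted $\RCF$ is $\NTP_2$, but the justification should invoke $\NTP_2$ rather than claim generality.
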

\begin{proof}
Because $\RCF$ is an $\NTP_{2}$ theory, any dividing formula divides along some coheir sequence by~\cite[Lemma 3.12]{chernikov2012forking}. So if $A \nind^{\RCF}_{M} B$, then there is a formula $\varphi(x;b)$ in $\tp_{\RCF}(A_R/M_RB_R)$ and a coheir sequence $I = (B_{i})$ over $M_{R}$ in $\tp_{\RCF}(B_R/M_R)$ such that $\varphi(x;b)$ divides along $I$. By Lemmas~\ref{coheir expansion} and~\ref{tuple lengthening}, there is a coheir sequence $I' = (B'_{i})_{i <  \omega}$ over $M$ in $\tp(B/M)$ such that $((B'_{i})_{R})_{i < \omega} \equiv^\RCF_{M_{R}} I$. Then $\varphi(x;b)$ divides along $I'$, and $A \nind^{Kd}_{M} B$. 
\end{proof}

\begin{lem}\label{lem:kimvs}
Suppose $M\models T^\RCF_\infty$. 
\begin{enumerate}
\item If $A\ind^u_M B$, then $A\ind^V_M B$. 
\item If $(B_i)_{i<\omega}$ is a $\ind^V_M$-independent sequence (i.e.,  $B_i\ind^V_M B_0\dots B_{i-1}$ for all $i<\omega$), and there exists $A'$ such that $A'B_i\equiv_M AB$ for all $i<\omega$, then $A\ind^V_M B$.
\item If $A\ind^{Kd}_M B$, then $A\ind^V_M B$.  
\end{enumerate}
\end{lem}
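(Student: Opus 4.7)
The plan is to prove the three parts in order, each relying on the finite-dimensionality of spans $\langle \bar c\rangle$ of finite tuples $\bar c$.

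For (1), I will argue by contradiction: assume $A\ind^u_M B$ and choose $v\in \langle A_V\rangle \cap \langle B_V\rangle\setminus \langle M_V\rangle$, with $v = \sum \alpha_i a_i = \sum \beta_j b_j$ for finite tuples $\bar a$ in $A_V$ and $\bar b$ in $B_V$. Since $\langle \bar b\rangle$ is finite-dimensional, so is the subspace $\langle \bar b\rangle \cap \langle M_V\rangle$, and I can pick a finite tuple $\bar m$ in $M_V$ with $\langle \bar m\rangle \supseteq \langle \bar b\rangle \cap \langle M_V\rangle$. Consider the formula
\[
\psi(\bar x;\bar b,\bar m) \;:=\; \exists u\, \exists \bar\gamma\, \exists \bar\delta\, \bigl(u = \sum \gamma_i x_i \;\wedge\; u = \sum \delta_j b_j \;\wedge\; u\notin \langle \bar m\rangle\bigr),
\]
which is witnessed at $\bar x = \bar a$ by $u = v$, so it lies in $\tp(A/MB)$. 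For any $\bar a'$ from $M$, any realization $u$ of $\psi(\bar a';\bar b,\bar m)$ would satisfy $u\in \langle \bar a'\rangle\cap \langle \bar b\rangle\subseteq \langle M_V\rangle\cap \langle \bar b\rangle\subseteq \langle \bar m\rangle$, contradicting $u\notin \langle \bar m\rangle$. Hence $\psi$ is not finitely satisfied in $M$, contradicting $A\ind^u_M B$.

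For (2), suppose $A\nind^V_M B$ is witnessed by $v\in \langle A_V\rangle \cap \langle B_V\rangle \setminus \langle M_V\rangle$. Fix a finite subtuple $\bar a$ of the enumeration of $A$ with $v\in \langle \bar a\rangle$, and let $\bar a'$ be the corresponding subtuple of the enumeration of $A'$. For each $i$, saturation applied to $A'B_i\equiv_M AB$ yields $v_i$ with $A'B_iv_i\equiv_M ABv$; then $v_i\in \langle \bar a'\rangle\cap \langle (B_i)_V\rangle$ and $v_i\notin \langle M_V\rangle$. Crucially, $\bar a'$ does not depend on $i$, so all $v_i$ live in the fixed finite-dimensional subspace $\langle \bar a'\rangle$. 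Setting $n=\dim \langle \bar a'\rangle$, the vectors $v_0,\dots,v_n$ are linearly dependent, so there is a least $j\leq n$ with $v_j\in \langle v_0,\dots,v_{j-1}\rangle$. Since $v_\ell\in \langle (B_\ell)_V\rangle$ for each $\ell$, we obtain $v_j\in \langle (B_j)_V\rangle\cap \langle (B_{<j})_V\rangle\subseteq \langle M_V\rangle$ by $B_j\ind^V_M B_{<j}$, contradicting $v_j\notin \langle M_V\rangle$.

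For (3), the plan is to combine (1) and (2) via a coheir sequence. Let $p$ be a global $M$-finitely satisfiable extension of $\tp(B/M)$, and let $(B_i)_{i<\omega}$ be a Morley sequence for $p$ over $M$ with $B_0=B$. Each $B_i \ind^u_M B_{<i}$, so by (1) each $B_i\ind^V_M B_{<i}$, making $(B_i)$ a $\ind^V_M$-independent sequence. Since $p$ is $M$-invariant and $A\ind^{Kd}_M B$, every conjunction of formulas in $\tp(A/MB)$ fails to divide along the Morley sequence obtained by transporting $B$ to each $B_i$; compactness then produces $A'$ with $A'B_i\equiv_M AB$ for all $i$. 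Part (2) applies and yields $A\ind^V_M B$.

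I expect the main obstacle to be the bookkeeping in (2): the pigeonhole step requires every $v_i$ to live in a single finite-dimensional subspace, which forces the isomorphisms witnessing $A'B_i\equiv_M AB$ to send the fixed tuple $\bar a$ to the same tuple $\bar a'$ for all $i$, rather than merely mapping $A$ onto $A'$ as sets. Once this enumeration discipline is in place, the linear-dependence argument combined with the $\ind^V_M$-independence of $(B_i)$ produces the contradiction cleanly.
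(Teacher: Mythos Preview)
Your proof is correct and follows essentially the same route as the paper: the same finite-satisfiability contradiction in (1), the same pigeonhole-in-a-finite-dimensional-span argument in (2), and the same reduction via a coheir sequence in (3). The only cosmetic differences are that the paper writes its formula with the $I_n$ predicates and works with a basis $c$ for $\langle \bar b\rangle\cap\langle M_V\rangle$ rather than a spanning tuple $\bar m$ from $M_V$; also, what you call ``saturation'' in (2) is really homogeneity of the monster (an automorphism carrying $AB$ to $A'B_i$ carries $v$ to the desired $v_i$).
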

\begin{proof}
Suppose that $A\nind^V_M B$. Then $\langle A_V\rangle \cap \langle B_V\rangle \not\subseteq \langle M_V\rangle$, so there exists a vector $v$, a finite linearly independent tuple $a$ from $A_V$, and a finite linearly independent tuple $b$ from $B_V$ such that $v\in \langle a\rangle \cap \langle b\rangle$ and $v\notin \langle M_V\rangle$. Let $C = \langle b\rangle \cap \langle M_V\rangle$, and note that $C$ is a subspace of the finite-dimensional space $\langle b\rangle$. Let $c$ be a finite basis for $C$. Note that the formula $\varphi(x;b,c)$: \[\exists w\, (I_{|a|}(x)\land  \lnot I_{|a|+1}(x,w)\land \lnot I_{|b|+1}(b,w)\land I_{|c|+1}(c,w)),\] which asserts that $x$ is linearly independent and $\langle x\rangle \cap \langle b\rangle \not\subseteq \langle c\rangle$, is in $\tp(a/Mb)$. With the above notation set, we now prove (1) and (2).  

For (1), assume for contradiction that $A\ind^u_M B$. Since $\tp(a/Mb)$ is finitely satisfiable in $M$, there is some $a'\in M_V$ satisfying $\varphi(a';b,c)$. Let $w'$ be the witness to the existential quantifier. Then $w'\in \langle a'\rangle\subseteq \langle M_V\rangle$ and $w'\in \langle b\rangle$, so $w'\in \langle b\rangle\cap \langle M_V\rangle = C$. But $w'\notin \langle c\rangle$, contradiction.

For (2), assume for contradiction that there exists a $\ind^V_M$-independent sequence $(B_i)_{i<\omega}$ and $A'$ such that $A'B_i\equiv_M AB$ for all $i<\omega$. Let $(b_i)_{i<\omega}$ be the restriction of this sequence to the tuples $b_i$ from $B_i$ corresponding to the tuple $b$ in $B$, and let $a'$ be the tuple from $A'$ corresponding to the tuple $a$ in $A$. Let $k = \dim(\langle a'\rangle) = |a'|$, and let $v_0,\dots,v_k$ be such that $v_i\in \langle a'\rangle \cap \langle b_i\rangle \setminus \langle c\rangle$ for all $i <k+1$. Since these $k+1$ vectors are all in $\langle a'\rangle$, they are not linearly independent, and we can write one of them, say $v_j$, as a linear combination of $v_0,\dots,v_{j-1}$. Then $v_j\in \langle b_j\rangle\cap \langle b_0,\dots,b_{j-1}\rangle\setminus \langle c\rangle$. But since $b_j\ind^V_M b_0\dots b_{j-1}$, $\langle b_j\rangle\cap \langle b_0,\dots,b_{j-1}\rangle \subseteq \langle b_j\rangle\cap \langle M_V\rangle = \langle c\rangle$, contradiction. 

For (3), let $(B_i)_{i<\omega}$ be a coheir sequence in $\tp(B/M)$. Since $A\ind^{Kd}_M B$, by compactness there exists $A'$ such that $A'B_i\equiv_M AB$ for all $i<\omega$. By (1), $(B_i)_{i<\omega}$ is a $\ind^V_M$-independent sequence, and by (2), $A\ind^V_M B$. 
\end{proof}

\begin{thm} \label{Kimchar} 
If $M \models T^{\RCF}_{\infty}$, $A = \mathrm{acl}(AM)$, $B = \mathrm{acl}(BM)$, then $A \ind^{Kd}_{M} B$ if and only if $A \ind^{\RCF}_{M} B$ and $A\ind^V_M B$. 
\end{thm}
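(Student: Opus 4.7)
The forward direction is immediate: Lemma~\ref{lem:kimrcf} gives $A\ind^{Kd}_M B \Rightarrow A\ind^{\RCF}_M B$, and Lemma~\ref{lem:kimvs}(3) gives $A\ind^{Kd}_M B \Rightarrow A\ind^V_M B$. So I focus on the backward direction.

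Assume $A\ind^{\RCF}_M B$ and $A\ind^V_M B$. Since $T^{\RCF}_{\infty}$ is known to be $\NSOP_1$ (see \cite{JAN}), it is enough to produce one coheir sequence $I=(B_i)_{i<\omega}$ in $\tp(B/M)$ over $M$ along which no formula of $\tp(A/MB)$ divides; equivalently, to show that $\bigcup_{i<\omega}\tp(A/MB_i)$ is consistent. Since each $B_j\ind^u_M B_{<j}$ for a coheir sequence, Lemma~\ref{lem:kimvs}(1) ensures that the spans $\langle(B_i)_V\rangle$ are linearly disjoint over $\langle M_V\rangle$. My plan is to construct a realization $A^*$ of $\bigcup_i\tp(A/MB_i)$ by handling the field and vector sorts separately and patching them together via quantifier elimination in the expanded Granger language (with predicates $I_n$ and coordinate functions $F_{n,k}$).

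For the field sort, stable embeddedness of $R$ (Remark~\ref{rem:stablyembedded}) reduces the task to realizing, in the $\RCF$-reduct, a type over $M_R\cup\bigcup_i(B_i)_R$ extending $\tp_{\RCF}(A_R/M_R)$ together with the cross bilinear-form values. Since $\RCF$ is NIP and $A_R\ind^{\RCF}_{M_R}B_R$, the standard fact that non-forking extends along $M_R$-indiscernible sequences in NIP theories yields a realization $A_R^*$ of $\bigcup_i\tp_{\RCF}(A_R/M_R(B_i)_R)$; the cross terms $[a,b]$ with $a\in A_V$ and $b\in(B_i)_V$ are prescribed by transporting along the $M$-automorphism sending $B$ to $B_i$. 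For the vector sort, fix a basis $a_1,\dots,a_n$ of $\langle A_V\rangle$ modulo $\langle M_V\rangle$ and build $a_1^*,\dots,a_n^*$ that are linearly independent over $\langle M_V\cup\bigcup_i(B_i)_V\rangle$ and that realize the prescribed bilinear-form values on each $(B_i)_V$. The key point is that, because the spans $\langle(B_i)_V\rangle$ are linearly disjoint over $\langle M_V\rangle$, the relevant linear functionals $v\mapsto[a_k^*,v]$ on each $\langle(B_i)_V,M_V\rangle$ glue consistently to a single linear functional on the joint span.

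The principal technical obstacle is reconciling the field and vector data through the bilinear form: the cross terms $[a,b]$ for $a\in A_V$, $b\in B_V$ lie in $\acl(AB\cup M)_R$, outside both $A_R$ and $B_R$, and must be chosen consistently across the $(B_i)_V$. The hypotheses $A=\acl(AM)$ and $B=\acl(BM)$ guarantee that all field-valued data definable from $A$ and $B$ over $M$ are already encoded in the types under consideration, so that once the field and vector atomic data are consistently prescribed the quantifier elimination delivers an honest realization of $\bigcup_i\tp(A/MB_i)$, completing the proof.
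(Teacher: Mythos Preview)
Your reduction rests on the claim that $T^{\RCF}_{\infty}$ is $\NSOP_1$, but this is false. The field sort is a stably embedded real closed field, and $\RCF$ has $\SOP$ (hence $\SOP_1$) via the order; this $\SOP_1$ configuration lives entirely in $R$ and persists in $T^{\RCF}_{\infty}$. Indeed, the whole point of including this example in the paper is that it mixes $\NTP_2$ behavior (from $\RCF$) with $\NSOP_1$ behavior (from the bilinear form), while being neither. If $T^{\RCF}_{\infty}$ were $\NSOP_1$, the subsequent theorem that it satisfies New Kim's Lemma would be an immediate corollary of Theorem~\ref{thm:kimnsop1} and would not require a separate argument. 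You may be thinking of the analogous theory over an $\NSOP_1$ (e.g.\ algebraically closed) field treated in~\cite{kaplan2017kim} and~\cite{bossut2023note}; the situation over $\RCF$ is genuinely different.

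Because the theory is not $\NSOP_1$, exhibiting a single coheir sequence along which $\tp(A/MB)$ does not divide is insufficient: you must handle an \emph{arbitrary} Morley sequence $(B_i)_{i<\omega}$ for a global $M$-invariant type extending $\tp(B/M)$. The paper does exactly this. For the field sort, one uses $A\ind^{\RCF}_M B$ together with the fact that $((B_i)_R)_{i<\omega}$ is itself an invariant Morley sequence in $\RCF$ (by stable embeddedness) to find $A'_R$ with $A'_R(B_i)_R\equiv^{\RCF}_{M_R} A_RB_R$ for all $i$. For the vector sort, one builds an abstract extension of the substructure generated by $M\cup\bigcup_i B_i$ by adjoining new basis vectors $\overline{a}'$, defines the bilinear form on pairs involving $\overline{a}'$ by copying the values $[a_{i'},b_{0,k}]$ uniformly across all $j$, checks that the result is alternating (or symmetric positive-definite), and then embeds into $\monster$. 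Your sketch of gluing linear functionals is in the right spirit for this second step, but the argument must be carried out for an arbitrary invariant Morley sequence, not a single chosen one.
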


\begin{proof}
One direction is Lemma~\ref{lem:kimrcf} and Lemma~\ref{lem:kimvs}(3).

In the other direction, suppose that $A \ind^{\RCF}_{M} B$ and $A\ind^V_M B$. Let $(B_{i})_{i < \omega}$ be a Morley sequence over $M$ for a global $M$-invariant type extending $\tp(B/M)$. Since $A\ind^\RCF_M B$, we can find $A'_R$ such that $A'_R(B_i)_R \equiv^\RCF_{M_R} A_RB_R$ for all $i<\omega$. By Remark~\ref{rem:stablyembedded}, $A'_RB_i \equiv_{M} A_RB$ for all $i<\omega$. Let $\tilde{R}$ be the field $(\acl(A'_R(B_i)_{i<\omega}))_R$.  

Let $\overline{m} = (m_{i})_{i < \alpha}$ be a tuple from $M_V$ which is a basis of $\langle M_V \rangle$.  Choose $\overline{a} = (a_{i})_{i < \beta}$ from $A_V$ such that $\overline{a}\overline{m}$ is a basis of $\langle A_V \rangle$ and choose $\overline{b}_{i} = (b_{i,j})_{j < \gamma}$ from $(B_i)_V$ such that $\overline{m} \overline{b}_{i}$ is a basis of $\langle (B_{i})_V \rangle$. Since $(B_i)_{i<\omega}$ is a $\ind^i_M$-independent sequence, by Lemma~\ref{lem:kimvs}(3) it is also a $\ind^V_M$-independent sequence. Thus $\overline{m}$ and $(\overline{b}_{i})_{i < \omega}$ are linearly independent. Let $\tilde{V} = \langle \overline{m}(\overline{b}_{i})_{i < \omega} \rangle_{\tilde{R}}$, the vector space over $\tilde{R}$ spanned by this basis. Note that, unlike $\langle \overline{m}(\overline{b}_{i})_{i < \omega} \rangle$, this is a small set, and it contains $(B_i)_{i<\omega}$, since $\tilde{R}$ contains the values of the coordinate functions $F_{n,i}$ on tuples from $(B_i)_{i<\omega}$. Let $\tilde{N}$ be the substructure of $\monster$ with $\tilde{N}_R = \tilde{R}$ and $\tilde{N}_V = \tilde{V}$. Note that if we give the symbols $\theta_n$ and $F_{n,i}$ their intended interpretations in $\tilde{N}$, they agree with the interpretations of these symbols in $\monster$. 

Let $\overline{a}' = (a'_i)_{i<\beta}$ be a tuple of new vectors (not in $\monster_V$) of the same length as $\overline{a}$. Let $W$ be the $\tilde{R}$-vector space extending $\tilde{V}$ with basis $\overline{a}'$, $\overline{m}$, and $(\overline{b}_{i})_{i < \omega}$. We build a structure $N$ extending $\tilde{N}$ with $N_R = \tilde{N}_R = \tilde{R}$ and $N_V = W$. The field structure and vector space structure have been determined, so it remains to define the bilinear form $[-,-]^N$. To do this, it suffices to define the form on every pair of basis vectors for $W$ such that at least one comes from $\overline{a}'$, and extend linearly. 

For all $i < \alpha$, $i'<\beta$, ,$j'<\beta$, $j < \omega$, and $k < \gamma$, set
\begin{align*}
[a'_{i'},a'_{j'}]^N &= [a_{i'},a_{j'}]^\monster\\
[a'_{i'},m_i]^N &= [a_{i'},m_i]^\monster\\
[a'_{i'}, b_{j,k}]^{N} & = [a_{i}, b_{0,k}]^{\monster}.
\end{align*}
These conditions uniquely determine a bilinear form on all pairs of vectors from $W$, which is alternating or symmetric and positive-definite, as required by $T^\RCF_\infty$. We can extend the language to include the $\theta_n$ and $F_{n,i}$ in the natural way, and the interpretations of these symbols agree with those on $\tilde{N}$, since $\tilde{N}_R = N_R$. 

Now we can embed $N$ into $\monster$ over $\tilde{N}$.  Let $A'_V$ be the image under this embedding of the subset of $N$ corresponding to $A_V$, and let $A' = (A'_R,A'_V)$.  It follows by construction and quantifier elimination that $A'B_{i} \equiv_{M} AB$ for all $i<\omega$.  Thus $A\ind^{Kd}_M B$. 
\end{proof}

\begin{thm}
The theory $T^{\RCF}_{\infty}$ satisfies New Kim's Lemma.
\end{thm}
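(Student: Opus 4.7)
The proof follows the template of the $\DLO_p$ case. Assume $\varphi(x;b)$ Kim-divides over $M$; pick a global Kim-strict $M$-invariant type $p(y)$ extending $\tp(b/M)$ and a Morley sequence $I = (b_n)_{n<\omega}$ for $p$. Suppose for contradiction that $\{\varphi(x;b_n):n<\omega\}$ is consistent, realized by some $a$; by Ramsey, compactness, and an automorphism, we may assume $I$ is $Ma$-indiscernible. It then suffices to show $a \ind^{Kd}_M b_0$, since this contradicts the Kim-dividing of $\varphi(x;b_0)$. Setting $A = \acl(Ma)$ and $B_n = \acl(Mb_n)$, Theorem~\ref{Kimchar} reduces the task to proving both $A \ind^V_M B_0$ and $A \ind^{\RCF}_M B_0$.

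For the vector space part, Kim-strictness of $p$ gives $b_{<n} \ind^K_M b_n$ for every $n$. Passing through $\ind^{Kd}$, Lemma~\ref{lem:kimvs}(3) combined with the symmetry of $\ind^V$ yields $b_n \ind^V_M b_{<n}$; and since $\langle \acl(Mb_n)_V\rangle = \langle M_V, b_{n,V}\rangle$, the sequence $(B_n)_{n<\omega}$ is $\ind^V_M$-independent. The $Ma$-indiscernibility of $I$ furnishes $M$-automorphisms fixing $a$ and sending $b_0$ to each $b_n$ (hence $B_0$ to $B_n$), so the hypotheses of Lemma~\ref{lem:kimvs}(2) are satisfied with $A' = A$, yielding $A \ind^V_M B_0$.

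For the $\RCF$ part, stable embeddedness of the field sort (Remark~\ref{rem:stablyembedded}) implies that the restriction $p|_R$ of $p$ to $\RCF$-formulas over $\monster_R$ is a complete global $M_R$-invariant $\RCF$-type. I then verify that $p|_R$ is strict over $M_R$ in $\RCF$: given any $a'_R$ realizing $(p|_R)|_{M_R C_R}$, a stable-embeddedness argument lets me extend $a'_R$ to a realization $a'$ of $p|_{MC_R}$ in the full theory; Kim-strictness of $p$ gives $C_R \ind^K_M a'$, which by Lemma~\ref{lem:kimrcf} and Fact~\ref{fact:ntp2} (applied to the $\NTP_2$ theory $\RCF$) yields $C_R \ind^K_{M_R} a'_R$. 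Hence $(b_{n,R})_{n<\omega}$ is a Morley sequence over $M_R$ for the strict $M_R$-invariant type $p|_R$, so Theorem~\ref{thm:ntp2kim} applied to $\RCF$ implies that no formula in $\tp_{\RCF}(a_R/M_R b_{0,R})$ divides over $M_R$ (since $a_R$ witnesses consistency along $(b_{n,R})$). Running the same argument with larger tuples enumerating all $R$-data definable from $Ma$ and $Mb_0$ via coordinate functions produces $A \ind^{\RCF}_M B_0$. The main obstacle is the strictness verification for $p|_R$, which requires the careful lift of an $\RCF$-realization back to the full theory and the bookkeeping around $\acl$ and stable embeddedness.
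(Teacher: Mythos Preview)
Your overall strategy matches the paper's: reduce to showing $A\ind^{Kd}_M B_0$ via Theorem~\ref{Kimchar}, handle the $\ind^V$ part via Lemma~\ref{lem:kimvs}(2) and (3), and handle the $\ind^{\RCF}$ part via the $\NTP_2$ Kim's Lemma in $\RCF$. The vector space argument is fine (you use Kim-strictness and symmetry of $\ind^V$ where the paper uses the forward $\ind^i$-independence, but either works).

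The gap is in the $\RCF$ part. Your argument that $p|_R$ is a strict $M_R$-invariant type is correct, and it yields that $((b_n)_R)_{n<\omega}$ is a strict Morley sequence in $\RCF$, hence $A_R \ind^f_{M_R} (b_0)_R$. But Theorem~\ref{Kimchar} requires $A_R \ind^f_{M_R} (B_0)_R$, where $(B_0)_R = (\acl(Mb_0))_R$ contains field elements such as $[v,w]$ for $v,w\in (b_0)_V$ that need not lie in $\acl^{\RCF}(M_R(b_0)_R)$. Your sentence about ``running the same argument with larger tuples'' does not close this gap as stated: the sequence $(B_n)$ is not a Morley sequence for $p$, and there is no reason $p$ extends to a Kim-strict invariant type in the larger variable enumerating $\acl(Mb)$. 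One can salvage your approach by considering, for each finite $d\in\dcl(Mb_0)_R$, the pushforward $g_*p$ along an $M$-definable function $g$ with $d=g(b_0)$, checking that $g_*p$ remains Kim-strict (since $g(b')\in\dcl(Mb')$), and then applying your lifting argument and the $\NTP_2$ Kim's Lemma to each such $d$ separately; but this is real additional work that you have not indicated.

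The paper takes a different route for exactly this reason. Rather than restricting $p$, it passes to an $|M|^+$-saturated model $N$ containing $M$ and $(B_i)_{i<\omega}$, realizes $q|_N$ by $b_\omega$, and sets $B_\omega = \acl(Mb_\omega)$. Kim-strictness gives $B_\omega \ind^{\RCF}_M N$ and $N\ind^{\RCF}_M B_\omega$; then, crucially using that $\RCF$ is $\NIP$ (so $\ind^i = \ind^f$ over models), $\tp_{\RCF}((B_\omega)_R/N_R)$ extends to a global $M_R$-invariant type $q_*$. Strictness of $q_*$ is verified by pulling arbitrary parameters back into $N_R$ via saturation. This produces a strict invariant type directly for the enumeration of $(B_\omega)_R$, so the $A_R$-indiscernible sequence $((B_i)_R)_{i<\omega}$ is already a strict Morley sequence for $q_*$, and the $\NTP_2$ Kim's Lemma immediately gives $A_R \ind^f_{M_R} (B_0)_R$.
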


\begin{proof}
Let $M \models T^{\RCF}_{\infty}$ and suppose $\varphi(x;b)$ Kim-divides over $M$. Let $I = (b_{i})_{i < \omega}$ be a Morley sequence over $M$ for a global Kim-strict $M$-invariant type $q(y)\supseteq\tp(b/M)$. We would like to show that $\varphi(x;b_i)$ divides along $I$. Assume, towards contradiction, that there exists $a$ realizing $\{\varphi(x;b_{i}) : i < \omega\}$. By Ramsey's theorem, compactness, and an automorphism, we may assume that $(b_{i})_{i < \omega}$ is indiscernible over $A = \acl(Ma)$. For each $i < \omega$, let $B_{i} = \mathrm{acl}(Mb_{i})$, with each $B_i$ enumerated in such a way that $(B_i)_{i<\omega}$ remains indiscernible over $A$. 

Since $(b_i)_{i<\omega}$ is a $\ind^i_M$-independent sequence, it is a $\ind^K_M$-independent sequence, and thus $(B_i)_{i<\omega}$ is a $\ind^K_M$-independent sequence. By Lemma~\ref{lem:kimvs}(3), $(B_i)_{i<\omega}$ is a $\ind^V_M$-independent sequence, and since $AB_i\equiv_M AB_0$ for all $i<\omega$, $A\ind^V_M B_0$ by Lemma~\ref{lem:kimvs}(2). 

We now claim that $((B_{i})_R)_{i < \omega}$ is a (Kim-)strict Morley sequence over $M_R$ in $\RCF$. Let $N$ be an $|M|^+$-saturated model containing $M$ and $(B_i)_{i<\omega}$. Let $b_\omega$ realize $q|_{N}$, and let $B_\omega = \acl(Mb_\omega)$. Since $(b_i)_{i\leq \omega}$ is a Morley sequence over $M$, and hence $M$-indiscernible, and $(B_i)_{i < \omega}$ is $M$-indiscernible, we can enumerate $B_\omega$ in such a way that $(B_i)_{i\leq \omega}$ remains $M$-indiscernible.

Since $q$ is Kim-strict, $b_\omega\ind^K_M N$ and $N\ind^K_M b_\omega$, so $B_\omega \ind^K_M N$ and $N \ind^K_M B_\omega$, and hence $B_\omega \ind^{\RCF}_{M} N$ and $N \ind^{\RCF}_{M} B_\omega$, by Theorem~\ref{Kimchar}. Since $\RCF$ is an $\NIP$ theory, $\ind^{i}_M = \ind^{f}_M$ in $\RCF$ (see~\cite[Corollary 5.22]{simon2015guide}). Thus $\mathrm{tp}_{\RCF}((B_\omega)_R/N_R)$ extends to a global $M_R$-invariant type $q_*$ which is strict over $M_R$ in $\RCF$. Indeed, suppose for contradiction that $C_R\subseteq \monster_R$, $B'_R\models q_*|_{N_RC_R}$, and $C_R\nind^f_{M_R} B'_R$ in $\RCF$. Then $c\nind^f_{M_R} B'_R$ for some finite tuple $c$ from $C_R$, whose type over $M_R$ is realized by $c'\in N_R$. Then $c'\nind^f_{M_R} B'_R$ in $\RCF$ by invariance of $q_*$, contradicting $N \ind^{\RCF}_{M} B_\omega$.

By $M$-indiscernibility of $(B_i)_{i\leq \omega}$, $(B_{i})_R \models q_{*}|_{M_R(B_{<i})_R}$ for all $i$, so $((B_{i})_R)_{i < \omega}$ is a strict Morley sequence over $M_R$ in $\RCF$. Since $((B_{i})_R)_{i < \omega}$ is $A_R$-indiscernible, it follows that $A \ind^{\RCF}_{M} B_{0}$ by the NTP$_{2}$ Kim's Lemma (Theorem~\ref{thm:ntp2kim}). 

Since $A\ind^\RCF_M B_0$ and $A\ind^V_M B_0$, by Theorem~\ref{Kimchar}, $A\ind^{Kd}_M B_0$. This contradicts the fact that $\tp(A/MB_0)$ contains the formula $\varphi(x;b_0)$, which Kim-divides over $M$, since $b_0\equiv_M b$.   
\end{proof}

\subsection{Non-example: the Henson graph}

The Henson graph, or generic triangle-free graph, is the Fra\"iss\'e limit of the class of finite triangle-free graphs. Its complete theory $T_\triangle$ is $\SOP_3$ and $\NSOP_4$. Conant analyzed forking and dividing in $T_\triangle$ in detail in~\cite{conant2014forking}. We will use the following characterization of $\ind^f$.  

\begin{fact}[{\cite[Theorem 5.3]{conant2014forking}}]\label{fact:hensonforking}
Suppose that $A$ and $B$ are sets in $\monster\models T_\triangle$ and $M\models T_\triangle$. Then $A\ind^f_M B$ if and only if $A\cap B\subseteq M$ and for all $a\in A$ and $b\neq c\in B\setminus M$, if $aRb$ and $aRc$, then there exists $m\in M$ such that $mRb$ and $mRc$.
\end{fact}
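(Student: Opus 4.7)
The plan is to prove the two directions separately: the forward direction exhibits dividing (hence forking) formulas in $\tp(A/MB)$ whenever either condition fails, while the reverse direction shows that the combinatorial relation defined by (a) and (b) satisfies enough abstract properties to coincide with non-forking over models.

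For the forward direction, assume $A\ind^f_M B$. If $c\in (A\cap B)\setminus M$, then $c\notin \acl(M) = M$, so $\tp(c/M)$ has infinitely many pairwise distinct realizations, and the formula $y = c$ in $\tp(A/MB)$ divides over $M$, a contradiction. Hence (a) holds. For (b), suppose $a\in A$ and $b\neq c$ lie in $B\setminus M$ with $aRb \land aRc$ and no $m\in M$ satisfying $mRb \land mRc$. I would construct an $M$-indiscernible sequence $(b_i, c_i)_{i<\omega}$, each pair realizing $\tp(bc/M)$, with $b_i R c_j$ for $i>j$. Consistency of the corresponding EM-type reduces by triangle-freeness to the fact that $b$ and $c$ have no common $M$-neighbor, which rules out triangles of the form $(b_i, c_j, m)$ with $m\in M$ and $i>j$. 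Along this sequence, $\varphi(x) := xRb \land xRc \in \tp(a/MB)$ is $2$-inconsistent: realizing $\varphi(x;b_0,c_0)\land\varphi(x;b_1,c_1)$ yields the triangle $\{x, b_1, c_0\}$, contradicting $A\ind^f_M B$.

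For the reverse direction, let $\ind^{\star}$ denote the relation defined by conditions (a) and (b). I would prove both that $\ind^{\star} \Rightarrow \ind^d$ and that $\ind^{\star}$ satisfies extension over models; together these imply $\ind^{\star} \Rightarrow \ind^f$, since extension allows one to build a global non-dividing extension of $\tp(A/MB)$. For $\ind^{\star} \Rightarrow \ind^d$: given an $M$-indiscernible sequence $(B_i)_{i<\omega}$ in $\tp(B/M)$, I would realize $\bigcup_i \tp(A/MB_i)$ via Fra\"iss\'e amalgamation of the diagrams of $(A, MB_i)$ over $M$, gluing distinct copies of $A$ into a single $A'$. Condition (a) forbids forced identifications across distinct $B_i$'s; condition (b), combined with the indiscernibility-controlled edge pattern between $B_i$ and $B_j$, rules out triangles with a vertex in $A'$ and vertices in distinct $B_i, B_j$. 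Extension of $\ind^{\star}$ is verified by the Fra\"iss\'e extension property: given $A\ind^{\star}_M B$ and $C\supseteq B$, one finds $A'\equiv_{MB} A$ disjoint from $C$ and with no adjacencies to $C\setminus B$, which automatically satisfies (a) and (b) over $C$.

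The main obstacle is the triangle-avoidance check in $\ind^{\star} \Rightarrow \ind^d$, specifically the case of triangles with one vertex in $A'$ and vertices in distinct $B_i, B_j$. One must identify which edges between $B_i$ and $B_j$ are forced by $M$-indiscernibility of $(B_i)$ together with $\tp(B/M)$, and verify via condition (b) that these forced edges do not combine with the $A'$--$B_i$ edges demanded by each $\tp(A/MB_i)$ to create a triangle. A secondary subtlety is the passage from ``no dividing'' to ``no forking,'' handled abstractly via extension of $\ind^{\star}$ rather than a direct disjunction-of-dividing-formulas argument.
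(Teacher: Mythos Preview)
The paper does not prove this statement; it is quoted as a fact from Conant's paper \cite{conant2014forking} and used as a black box in the proof that $T_\triangle$ fails New Kim's Lemma. So there is no proof in the paper to compare against.

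Your sketch is essentially correct and follows the natural strategy. The forward direction is fine: the sequence $(b_i,c_i)$ with $b_iRc_j$ for $i>j$ and no other new edges is triangle-free (the only nontrivial check is $\{b_i,c_j,m\}$ for $i>j$, ruled out exactly by the failure of condition (b)), and $2$-inconsistency of $\varphi$ along it is immediate. For the reverse direction, the abstract route ``$\ind^{\star}\Rightarrow\ind^d$ plus extension for $\ind^{\star}$ implies $\ind^{\star}\Rightarrow\ind^f$'' is standard and works here.

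Two points deserve more care in the amalgamation step for $\ind^\star\Rightarrow\ind^d$. First, the gluing of copies of $A$ over the $B_i$ is only well-defined if elements in $B_i\cap B_j$ (for $i\neq j$) do not receive conflicting edge assignments from $A'$. This is fine, but requires the observation that in any $M$-indiscernible sequence, a coordinate shared between $B_i$ and $B_j$ for $i\neq j$ is in fact constant across all $B_i$, so its preimage in $B$ is unambiguous. Second, in the triangle check $\{\tau(a),\sigma_i(b),\sigma_j(b')\}$ with $i\neq j$, your invocation of condition (b) only handles the case $b\neq b'$; when $b=b'$ one instead argues that $\sigma_i(b)R\sigma_j(b)$ for some $i\neq j$ would, by indiscernibility, force $\sigma_0(b),\sigma_1(b),\sigma_2(b)$ to be a triangle in $\monster$. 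Your phrase ``indiscernibility-controlled edge pattern'' may be gesturing at this, but it should be made explicit. Finally, in the extension argument, ``$A'$ disjoint from $C$ with no adjacencies to $C\setminus B$'' should be stated for $A'\setminus M$ rather than $A'$, since elements of $A\cap M$ are fixed.
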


We will show that a very weak variant of Kim's Lemma fails in $T_\triangle$: strict dividing does not imply universal strict dividing. Since strict dividing implies Kim-dividing and universal Kim-strict dividing implies universal strict dividing, it follows that $T_\triangle$ fails to satisfy New Kim's Lemma. 

\begin{thm}
Modulo $T_\triangle$, there is a formula which strictly divides but does not universally strictly divide. Thus $T_\triangle$ does not satisfy New Kim's Lemma. 
\end{thm}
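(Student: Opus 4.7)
The plan is to exhibit, over a suitable $M \models T_\triangle$, a formula $\varphi(x; b_0, b_1)$ together with two global strict $M$-invariant types $p_1$ and $p_2$ extending $\tp(b_0 b_1/M)$, such that $\varphi$ divides along a Morley sequence for $p_2$ but not along any Morley sequence for $p_1$. This directly yields a formula that strictly divides but does not universally strictly divide.

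Fix non-adjacent $m_0, m_1 \in M$ with $m_0 \neq m_1$, and choose $b_0, b_1 \in \monster \setminus M$ with $b_0 R m_0$, $b_1 R m_1$, no other $M$-neighbors, and $\lnot b_0 R b_1$; then $q_0 := \tp(b_0/M)$ and $q_1 := \tp(b_1/M)$ are distinct. Take $\varphi(x; y_0, y_1) := xRy_0 \land xRy_1$. By quantifier elimination for $T_\triangle$, an $M$-invariant global completion of $\tp(b_0 b_1/M)$ is determined by specifying, for each $M$-type $q$ and each $i \in \{0,1\}$, whether $y_i R c$ holds for $c$ of type $q$. Let $p_1$ be the ``no new edges'' extension, saying $\lnot y_0 R c$ and $\lnot y_1 R c$ for every $c \in \monster \setminus M$. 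Let $p_2$ be the ``swap'' extension, saying $y_0 R c$ iff $c$ has $M$-type $q_1$ and $y_1 R c$ iff $c$ has $M$-type $q_0$. Consistency of $p_1$ is trivial. For $p_2$, any two realizations of $q_i$ are non-adjacent (a common neighbor $m_i$ would otherwise force a triangle), and one checks that the edges posited by $p_2$ introduce no triangles.

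The main technical step is strictness of $p_2$ via Fact~\ref{fact:hensonforking}; strictness of $p_1$ is immediate since neither realization acquires new neighbors. Suppose $(b_0', b_1') \models p_2|_{MA}$. Disjointness $A \cap \{b_0', b_1'\} \subseteq M$ holds: if $b_0' = a \in A \setminus M$, then $a$ must be of type $q_0$ to match $b_0$, but $p_2$ demands $y_1 R a$, while $\lnot y_0 R y_1 \in p_2$ forces $\lnot b_1' R a$, a contradiction; symmetrically for $b_1'$. For the common-neighbor clause, the only unordered pair in $\{b_0', b_1'\}$ is $\{b_0', b_1'\}$ itself, and no $a \in A$ is adjacent to both: such an $a$ would either equal $m_0$ or $m_1$ (each of which is adjacent to only one of $b_0', b_1'$) or else lie in $A \setminus M$ and require the incompatible $M$-types $q_0$ and $q_1$. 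Hence the clause is vacuous and $A \ind^f_M (b_0', b_1')$, so $p_2$ is strict.

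Finally, analyze the two Morley sequences $(b_0^i, b_1^i)_{i<\omega}$. For $p_1$, there are no edges between distinct pairs, so by the Henson graph extension axiom one finds $x \in \monster$ adjacent to each $b_0^i, b_1^i$ and non-adjacent to $m_0, m_1$; hence $\{\varphi(x; b_0^i, b_1^i) : i < \omega\}$ is consistent and $\varphi$ does not divide along this sequence. For $p_2$, induction on $i$ using the $p_2$-rules shows $b_0^i R b_1^j$ iff $i \neq j$: $b_0^{i+1} R b_1^j$ for $j \leq i$ by applying $y_0 R c$ to $b_1^j$ of type $q_1$, and symmetrically $b_1^{i+1} R b_0^j$; while $\lnot b_0^i R b_1^i$ comes from $\tp(b_0 b_1/M)$. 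The edge $b_0^0 R b_1^1$ then produces the forbidden triangle $x, b_0^0, b_1^1$, forcing $\{\varphi(x; b_0^0, b_1^0), \varphi(x; b_0^1, b_1^1)\}$ to be inconsistent, so $\varphi$ divides along the $p_2$-Morley sequence. Thus $\varphi$ strictly divides but does not universally strictly divide, witnessing that $T_\triangle$ fails New Kim's Lemma. The main obstacle is strictness of $p_2$; the clean resolution rests on the fact that the distinct singleton $M$-neighbors $\{m_0\}, \{m_1\}$ make $q_0$ and $q_1$ incompatible, so no element can lie on both sides of the swap.
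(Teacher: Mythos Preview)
Your proof is correct and follows the same strategy as the paper's: construct two strict $M$-invariant extensions of $\tp(b_0 b_1/M)$, one with no new edges (so the formula does not divide along its Morley sequence) and one that connects a variable to realizations of the other variable's $M$-type (forcing an edge between terms of the Morley sequence and hence a triangle with $x$), then verify strictness via Conant's characterization. The only difference is cosmetic: the paper uses an asymmetric setup in which $c$ has no $M$-neighbors and only the $z$-variable acquires new edges, whereas your symmetric ``swap'' gives each $b_i$ a single $M$-neighbor and has each $y_i$ acquire edges to the other's type.
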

\begin{proof}
Let $M\models T$. Let $b$ and $c$ be elements of $\monster \setminus M$ with $\lnot b R c$, such that\footnote{Really, all we will use is that the set of neighbors of $b$ in $M$ is non-empty and disjoint from the set of neighbors of $c$ in $M$.} $b$ has a single neighbor in $M$, call it $m$,  and $c$ has no neighbors in $M$. Consider the formula $\varphi(x;b,c)\colon xRb\land xRc$. It suffices to find two strict global $M$-invariant types $p(y,z)$ and $q(y,z)$ extending $\tp(b,c/M)$ such that $\varphi(x;b,c)$ divides along Morley sequences for $p$ but does not divide along Morley sequences for $q$. 

Let $p(y,z)$ extend $\tp(bc/M)$ by including, for each $d\in \monster\setminus M$, $y\neq d$, $z\neq d$ and $\lnot y Rd$. Additionally,  include $zRd$ if $d\models \tp(b/M)$ and $\lnot zRd$ otherwise. We claim this defines a consistent partial type. Any inconsistency would come from a triangle involving the variables and elements of $\monster$. Such a triangle cannot contain $y$, since $\lnot yRz$ and $y$ has an edge to exactly one element of $\monster$, namely $m$. Since $z$ only has edges to realizations of $\tp(b/M)$, any triangle containing $z$ contains two realizations of $\tp(b/M)$. But no two realizations of $\tp(b/M)$ are adjacent, since they are both adjacent to $m$. 

By quantifier elimination, this partial type determines a complete $M$-invariant type over $\monster$. Letting $I = (b_i,c_i)_{i< \omega}$ be a Morley sequence for $p$ over $M$, $\varphi(x;b,c)$ divides along $I$, since $\{\varphi(x;b_1,c_1),\varphi(x;b_2,c_2)\}$ entails $\{xRb_1,xRc_2\}$, and $b_1Rc_2$. 

Now let $q(y,z)$ extend $\tp(bc/M)$ by including, for each $d\in \monster\setminus M$, $y\neq d$, $z\neq d$, $\lnot yRd$, and $\lnot zRd$. This defines a consistent partial type, since the only edge from a variable to an element of $\monster$ is the single edge from $y$ to $m$. Again, by quantifier elimination, this determines a complete $M$-invariant type over $\monster$. And if $J = (b_i,c_i)_{i< \omega}$ is a Morley sequence for $q$ over $M$, then $\varphi(x;b,c)$ does not divide along $J$. Indeed, since there are no edges among the vertices $\{b_i,c_i:i<\omega\}$, $\{\varphi(x;b_i,c_i):i<\omega\}$ does not induce any triangles.  

It remains to show that both $p$ and $q$ are strict. Let $A\subseteq \monster$, $b_0,c_0\models p|_{MA}$, and $b_1,c_1\models q|_{MA}$. We would like to show that for $i\in \{0,1\}$, $A\ind^f_M b_ic_i$. In each case, $A\cap \{b_i,c_i\} = \emptyset\subseteq M$, and there is no $a\in A$ such that $aRb_i$ and $aRc_i$ (since $b_i$ is not adjacent to any element of $A\setminus M$, and $c_i$ is not adjacent to any element of $M$). By Fact~\ref{fact:hensonforking}, $A\ind^f_M b_ic_i$.
\end{proof}

\section{Syntax}\label{sec:syntax}

In this section, we isolate a tree property, provisionally called $\BTP$, which generalizes $\TP_2$ and $\SOP_1$, and we show that $\NBTP$ theories satisfy New Kim's Lemma. We also show that $\NBTP$ theories are $\NATP$. We have not succeeded in proving that New Kim's Lemma characterizes $\NBTP$ theories. 

For ordinals $\alpha,\beta\leq \omega$, write $\alpha^{<\beta}_*$ for the forest obtained by removing the root from $\alpha^{<\beta}$. 

\begin{itemize}
\item A \emph{left-leaning path} in $\alpha^{<\beta}_*$ is a sequence $(\lambda_n)$ such that if $\lambda_n = \eta^\frown \langle i\rangle$, then $\eta^\frown \langle j\rangle \vartriangleleft \lambda_{n+1}$ for some $j \leq i$. 
\item A \emph{right-veering path} in $\alpha^{<\beta}_*$ is a sequence $(\rho_n)$ such that if $\rho_n = \eta^\frown \langle i\rangle$, then $\eta^\frown \langle j\rangle  \trianglelefteq \rho_{n+1}$ for some $j > i$. 
\end{itemize}

Note that to get to the next element in a left-leaning path, one \emph{optionally} moves leftward to a sibling and then moves \emph{strictly} upward to a descendent, while in a right-veering path, one moves \emph{strictly} rightward to a sibling, and then \emph{optionally} moves upward to a descendent. 

\begin{defn}
A formula $\varphi(x;y)$ has $k$-$\BTP$ (\emph{$k$-bizarre tree property}) with $k< \omega$ if there exists a forest of tuples $(a_\eta)_{\eta\in \omega^{<\omega}_*}$ satisfying the following conditions: 
\begin{itemize}
\item For every left-leaning path $(\lambda_n)_{n< \omega}$, $\{\varphi(x;a_{\lambda_n}): n< \omega\}$ is consistent. 
\item For every right-veering path $(\rho_n)_{n\in\omega}$, $\{\varphi(x;a_{\rho_n}): n< \omega\}$ is $k$-inconsistent. 
\end{itemize}

A theory $T$ has $\BTP$ if there is some formula $\varphi(x;y)$ and some $k< \omega$ such that $\varphi$ has $k$-$\BTP$. Otherwise, $T$ is $\NBTP$. 
\end{defn}

\begin{thm}
Suppose $T$ is $\NBTP$. Then $T$ satisfies New Kim's Lemma.
\end{thm}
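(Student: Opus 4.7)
The plan is to argue by contrapositive: assume $T$ does not satisfy New Kim's Lemma, and construct a formula witnessing $k$-$\BTP$ for some $k<\omega$. By assumption, there exist a model $M\models T$, a formula $\varphi(x;b)$ that Kim-divides over $M$ via some global $M$-invariant type $p(y)\supseteq \tp(b/M)$ with $k$-inconsistency (so for any Morley sequence $(b_i)_{i<\omega}$ for $p$, $\{\varphi(x;b_i):i<\omega\}$ is $k$-inconsistent), and a global Kim-strict $M$-invariant type $q(y)\supseteq \tp(b/M)$ with a Morley sequence $(c_i)_{i<\omega}$ such that $\{\varphi(x;c_i):i<\omega\}$ is consistent, realized by some $a$. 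By Ramsey, compactness, and an automorphism, we may assume $(c_i)_{i<\omega}$ is $Ma$-indiscernible.

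I would then construct a forest $(a_\eta)_{\eta\in\omega^{<\omega}_*}$ of realizations of $\tp(b/M)$ by recursion on $|\eta|$, interleaving two mechanisms. For the horizontal structure at each parent $\eta$, the siblings $(a_{\eta^\frown\langle i\rangle})_{i<\omega}$ are placed as a Morley sequence for a global $M$-invariant type extending $p$ over the tree data already constructed; this is what will ultimately force $k$-inconsistency across siblings and, more generally, along right-veering paths. For the vertical structure, each branch from a node $\eta$ downward is constructed to realize a Morley sequence for $q$ over the relevant context, carrying with it a uniform realization of $\{\varphi(x;\cdot)\}$ inherited from $a$ via the indiscernibility of $(c_i)$ over $Ma$; this is what will force consistency along left-leaning paths, which always proceed by descending (possibly after a leftward sibling jump). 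Kim-strictness of $q$ is what allows these two mechanisms to coexist, since a $q$-realization over an extended context is Kim-independent from the context, so adding $q$-descendants below a node does not introduce Kim-forking formulas over $M$, and in particular preserves the ability to extend the horizontal $p$-Morley sequence at each level.

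To verify the $\BTP$ conditions, the left-leaning case is handled by propagating the spine-realization $a$ across leftward sibling jumps using the indiscernibility built into the sibling Morley sequences together with the vertical $q$-structure, yielding a realization of $\{\varphi(x;a_{\lambda_n}):n<\omega\}$. The right-veering case amounts to the key claim that $(a_{\rho_n})_{n<\omega}$ is (an initial segment of) a Morley sequence over $M$ for some $M$-invariant type extending $p$, whence $\{\varphi(x;a_{\rho_n}):n<\omega\}$ is $k$-inconsistent by the Kim-dividing property of $p$. The main obstacle, and the crux of the argument, is precisely this last claim: a right-veering path can make arbitrarily deep ``right-then-down'' moves, so one must verify that the $q$-descents inserted between successive sibling jumps do not disrupt the $p$-Morley structure being extracted. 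Kim-strictness of $q$ is the essential tool here, guaranteeing that these $q$-descents do not create Kim-forking formulas over $M$ and therefore leave undisturbed the Kim-dividing structure witnessed by $p$. Making this rigorous will likely involve carrying out the recursion inside a sufficiently saturated ambient model and invoking compactness at each stage to choose a global $M$-invariant extension of $p$ that is compatible with the tree built so far.
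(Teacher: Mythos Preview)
Your overall strategy is correct (contrapositive, build a $\BTP$ forest from the two invariant types), and you have correctly assigned the roles: the Kim-dividing witness governs right-veering paths, while the Kim-strict type governs left-leaning paths. But you have misidentified where Kim-strictness is actually used, and as a result your sketch of the inductive construction does not go through.

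In the paper's argument, the forest is built by induction on height. Given a forest $F_0$ of height $n$, one first realizes the Kim-strict type (your $q$) over $MF_0$ to obtain $b_0$; Kim-strictness then yields $F_0\ind^K_M b_0$, so $r(z;b_0):=\tp(F_0/Mb_0)$ does not Kim-divide over $M$. One then iteratively realizes the dividing type (your $p$) over everything built so far to obtain $b_1,b_2,\dots$, and after each step uses non-Kim-dividing of $r(z;b_0)$ along this invariant Morley sequence to find a new copy $F_\ell$ of the forest realizing $\bigcup_{i\le\ell} r(z;b_i)$. The $b_i$'s (reindexed in reverse) become the bottom layer, and the $F_i$'s the subtrees above them. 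The crux is finding these copies $F_\ell$, and this is the \emph{only} place Kim-strictness is invoked. The right-veering $k$-inconsistency is then automatic: each $b_\ell$ realizes the dividing type over all of $(b_i,F_i)_{i<\ell}$, so reversed right-veering paths are Morley sequences for the dividing type purely by construction and induction---no Kim-strictness needed. The left-leaning structure is recovered not by propagating a fixed realization $a$, but from the choice of $F_\ell$ together with $M$-invariance of the Kim-strict type: since $F_\ell\equiv_M F_0$ and $F_\ell\models r(z;b_i)$, each $b_i$ with $i\le\ell$ realizes the Kim-strict type over $MF_\ell$. Your claim that ``Kim-strictness guarantees the $q$-descents do not disrupt the $p$-Morley structure'' inverts the logic: the dividing-type Morley structure needs no protection, while producing enough compatible copies of the already-built forest is precisely what non-Kim-dividing of $\tp(F_0/Mb_0)$ buys.
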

\begin{proof}
We prove the contrapositive. If New Kim's Lemma fails, then we have a formula $\varphi(x;b)$, a model $M\models T$, and global $M$-invariant types $p(y)$ and $q(y)$ extending $\tp(b/M)$ such that $p(y)$ is Kim-strict and $\varphi(x;b)$ divides along Morley sequences for $q$ but not along Morley sequences for $p$. Fix $k< \omega$ such that if $(b_i)_{i< \omega}$ is a Morley sequence for $q$, then $\{\varphi(x;b_i):i< \omega\}$ is $k$-inconsistent. 

For arbitrary $m$ and $n$ in $\omega$, we will build a finite forest $(a_\eta)_{\eta\in m^{< n}_*}$ such that:
\begin{itemize}
\item For every left-leaning path $(\lambda_i)_{1\leq i\leq \ell}$ in $m^{< n}_*$, $(a_{\lambda_\ell},\dots,a_{\lambda_1})$ starts a Morley sequence for $p$ over $M$, and hence $\{\varphi(x;a_{\lambda_i}):1\leq i\leq \ell\}$ is consistent. 
\item For every right-veering path $(\rho_i)_{1\leq i \leq \ell}$ in $m^{<n}_*$,  $(a_{\rho_\ell},\dots,a_{\rho_1})$ starts a Morley sequence for $q$ over $M$, and hence $\{\varphi(x;a_{\lambda_i}):1\leq i\leq \ell\}$ is $k$-inconsistent. 
\end{itemize}
By compactness, this will suffice to show that $\varphi(x;y)$ has $k$-BTP.

Fix $m< \omega$ with $m>0$, and proceed by induction on $n$. The base cases $n = 0$ and $n = 1$ are trivial, since $m^{<n}_*$ is empty. 

Suppose we are given $F_0 = (a_\eta)_{\eta\in m^{<n}_*}$ satisfying the induction hypothesis. Let $b_0$ realize $p|_{MF_0}$. Since $p$ is Kim-strict, $F_0\ind^{K}_M b_0$. Let $r(z,y) = \tp(F_0b_0/M)$. 

By induction on $1\leq \ell\leq m$, we now find $(b_i, F_i)_{i<\ell}$ such that 
\begin{enumerate}
\item $F_i \equiv_M F_0$ for all $i<\ell$.
\item $b_i$ realizes $p|_{MF_j}$ if $i\leq j$.
\item $(b_i,b_{i+1},\dots,b_{\ell-1})$ starts a Morley sequence in $q$ over $MF_j$ if $i> j$.
\end{enumerate}
In the base case $\ell = 1$, $b_0$ and $F_0$ satisfy the conditions. 

Given $(b_i, F_i)_{i<\ell}$ satisfying (1)--(3) for $\ell < m$, let $b_{\ell}$ realize $q|_{M(b_i,F_i)_{i<\ell}}$. Then (3) is satisfied for $\ell+1$. Since $r(z,b_0) = \tp(F_0/Mb_0)$ does not Kim-divide over $M$ and $(b_i)_{i<\ell+1}$ starts a Morley sequence for a global $M$-invariant type, $\bigcup_{i<\ell+1}r(z,b_i)$ is consistent. Let $F_\ell$ realize this type. Then (1) is satisfied for $\ell+1$. Now since $r(F_0,z) = \tp(b_0/MF_0) = p|_{MF_0}$, $p$ is $M$-invariant, and $F_\ell \equiv_M F_0$, we have, for all $i<\ell+1$, $\tp(b_i/MF_\ell) = r(F_\ell,z) = p|_{MF_{\ell}}$, and thus (2) is satisfied for $\ell+1$. 

Having constructed $(b_i, F_i)_{i<m}$, we reindex to define the forest $(a_\eta')_{\eta\in m^{<n+1}_*}$. By (1), we can write $F_i = (a^i_\eta)_{\eta\in m^{<n}_*}$, and each $F_i$ satisfies the induction hypothesis. Set $a'_{\langle i\rangle} = b_{m-i-1}$, and $a'_{\langle i\rangle ^\frown \eta} = a^{m-i-1}_\eta$. Note that the reindexing by $(m-i-1)$ means that our sequence $(b_i, F_i)_{i<m}$ proceeds leftward in the new forest. 

A left-leaning path in the new forest begins with at most one element $b_i$ at the bottom level and is followed by some left-leaning path in $F_j$ with $i\leq j$. By (2) and induction, the reverse sequence starts a Morley sequence for $p$ over $M$. A right-veering path in the new forest may begin with elements $b_{i_1},\dots,b_{i_\ell}$ at the bottom level, with $i_1 > \dots > i_\ell$, and is followed by a right-veering path in some $F_j$ with $i_\ell > j$. By (3) and induction, the reverse sequence starts a Morley sequence for $q$ over $M$. 
\end{proof}

We now situate $\NBTP$ relative to the other tree properties.

\begin{prop}
If $T$ is $\NTP_2$, then $T$ is $\NBTP$.
\end{prop}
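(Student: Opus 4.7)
The plan is to prove the contrapositive: if $T$ has $\BTP$, then $T$ has $\TP_2$. So suppose $\varphi(x;y)$ has $k$-$\BTP$, witnessed by a forest $(a_\eta)_{\eta \in \omega^{<\omega}_*}$. The goal is to extract from this data a $\TP_2$-array for $\varphi$ with the same $k$.

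The key observation is that two canonical kinds of paths are always available in $\omega^{<\omega}_*$: rightward walks among siblings are right-veering, and ``first-child spines'' (with an arbitrary terminal index) are left-leaning. Concretely, for any fixed $\eta \in \omega^{<\omega}$, the sequence $\rho_n := \eta^\frown\langle n\rangle$ is right-veering, since from $\rho_n$ we may take $j = n+1 > n$ with $\rho_{n+1} = \eta^\frown\langle j\rangle$ itself (using the non-strict $\trianglelefteq$). Dually, for any function $f : \omega \to \omega$, the sequence $\lambda_n := \langle 0\rangle^n\,^\frown \langle f(n)\rangle$, where $\langle 0\rangle^n$ denotes the sequence of $n$ zeros, is left-leaning: writing $\lambda_n = \eta^\frown\langle f(n)\rangle$ with $\eta = \langle 0\rangle^n$, we may take $j = 0 \leq f(n)$, and then $\eta^\frown\langle 0\rangle = \langle 0\rangle^{n+1}$ is a proper initial segment of $\lambda_{n+1}$.

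With this in hand, define the array $b_{i,j} := a_{\langle 0\rangle^i\,^\frown\langle j\rangle}$ for $i,j<\omega$. Each row $\{b_{i,j} : j < \omega\}$ is indexed by a rightward sibling-walk among the children of $\langle 0\rangle^i$, hence by a right-veering path, and so is $k$-inconsistent by $\BTP$. Each $\TP_2$-path $\{b_{i,f(i)} : i < \omega\}$ is indexed by a first-child spine as above, hence by a left-leaning path, and so is consistent by $\BTP$. Therefore $\varphi(x;y)$ has $\TP_2$ with the same $k$, and $T$ has $\TP_2$, as required.

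There is no substantial obstacle here; the argument is a direct unraveling of the definitions once the right sub-configuration is identified. The only point requiring care is the asymmetry in the two path-types between the strict and non-strict portions ($\vartriangleleft$ vs.\ $\trianglelefteq$, and ``$j > i$'' vs.\ ``$j \leq i$''), which forces the two specific choices above: take the immediate right-sibling for the right-veering walk (so the ``optional'' descent is trivial), and take $j = 0$ for the left-leaning descent (so the ``optional'' leftward move lands at the smallest sibling, which is always $\leq f(n)$).
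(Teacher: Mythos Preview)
Your proof is correct and essentially identical to the paper's: both define the array $b_{i,j} = a_{(0^i)^\frown\langle j\rangle}$, observe that each row is a right-veering path (hence $k$-inconsistent) and each path through the array is a left-leaning path (hence consistent), yielding $\TP_2$ for $\varphi$ with the same $k$. Your additional commentary on the strict/non-strict asymmetry is accurate and slightly more explicit than the paper's presentation, but the argument is the same.
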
 
\begin{proof}
Assume $\varphi(x;y)$ has $k$-$\BTP$, witnessed by $(a_\eta)_{\eta\in \omega^{<\omega}_*}$. Consider the array $(b_{i,j})_{i,j< \omega}$ with $b_{i,j} = a_{(0^i)^\frown \langle j\rangle}$, where $0^{i}$ denotes the string of length $i$ consisting of all $0$s. 

For all $f\colon \omega\to \omega$, the sequence $(\lambda_i)_{i< \omega}$ with $\lambda_i = (0^i)^\frown\langle f(i)\rangle$ is a left-leaning path. So $\{\varphi(x;b_{i,f(i)}):i<\omega\} = \{\varphi(x;a_{\lambda_i}):i<\omega\}$ is consistent. 

For all $i<\omega$, the sequence $(\rho_j)_{j< \omega}$ with $\rho_j = (0^i)^\frown \langle j\rangle$ is a right-veering path. So $\{\varphi(x;b_{i,j}):j<\omega\} = \{\varphi(x;a_{\rho_j}):j<\omega\}$ is $k$-inconsistent. 

Thus $\varphi(x;y)$ has $\TP_2$. 
\end{proof}

When $k>2$, a witness to $k$-$\BTP$ does not directly contain a witness to $\SOP_1$, but rather a variant of $\SOP_1$ with $k$-inconsistency instead of $2$-inconsistency. So for the implication from $\NSOP_1$ to $\NBTP$, we will use the following alternative characterization of $\SOP_1$ from~\cite{kaplan2017kim}.

\begin{fact}[{\cite[Proposition 2.4]{kaplan2017kim}}]\label{fact:sop1}
$T$ has $\SOP_1$ if and only if there exists $k< \omega$ and an array $(c_{i,j})_{i<\omega,j<2}$ such that:
\begin{itemize}
\item $c_{n,0}\equiv_{(c_{i,j})_{i<n,j<2}} c_{n,1}$ for all $n<\omega$.
\item $\{\varphi(x;c_{i,0}): i<\omega\}$ is consistent. 
\item $\{\varphi(x;c_{i,1}): i<\omega\}$ is $k$-inconsistent. 
\end{itemize}
\end{fact}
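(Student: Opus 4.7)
The statement is an equivalence characterizing $\SOP_1$ via an array with $k$-inconsistency. I would prove both directions, and expect the harder direction to be $(\Leftarrow)$.

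For $(\Rightarrow)$, I would start with a tree $(a_\eta)_{\eta\in 2^{<\omega}}$ witnessing $\SOP_1$ for some $\varphi(x;y)$, which already gives $2$-inconsistency. My first step is to apply a tree modeling theorem (strongly indiscernible tree extraction, via Ramsey and compactness applied to the tree language containing $\trianglelefteq$, lexicographic order, and meet) to replace $(a_\eta)$ by an s-indiscernible tree $(a'_\eta)$ still witnessing $\SOP_1$. Then I would take $k := 2$, $c_{n,0} := a'_{0^{n+1}}$, and $c_{n,1} := a'_{(0^n)^\frown\langle 1\rangle}$. Consistency of the $0$-column comes from consistency along the leftmost branch. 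For $2$-inconsistency of the $1$-column, take $n<m$, set $\nu = 0^n$, $\eta = (0^m)^\frown\langle 1\rangle$: since $\nu^\frown\langle 0\rangle = 0^{n+1}\trianglelefteq \eta$, the $\SOP_1$ condition applies directly. The conjugacy $c_{n,0}\equiv_{(c_{i,j})_{i<n,j<2}} c_{n,1}$ follows from s-indiscernibility, because the tree-automorphism of the finite substructure indexed by $\{0^{i+1},(0^i)^\frown\langle 1\rangle : i\leq n\}$ that swaps $0^{n+1}$ and $(0^n)^\frown\langle 1\rangle$ while fixing everything else is a valid tree-isomorphism.

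For $(\Leftarrow)$, assume the array $(c_{i,j})_{i<\omega,j<2}$ with $k$-inconsistency. The plan is to upgrade this data into an $\SOP_1$ tree, allowing the formula to change from $\varphi$ to an appropriate conjunction. First I would reduce to $k=2$: set $\varphi'(x;y_0,\dots,y_{k-2}) := \bigwedge_{i<k-1}\varphi(x;y_i)$, bundle rows of the array into consecutive blocks of $k-1$ rows, and verify that chaining the conjugacy condition through the block gives simultaneous conjugacy of the two bundled tuples over all previously bundled rows. Any two bundled $1$-column tuples together provide $2(k-1)\ge k$ of the original $1$-column entries, so $2$-inconsistency of $\varphi'$ is inherited from $k$-inconsistency of $\varphi$. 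Now, given an array with $k=2$, stretch the second coordinate by compactness to $(c_{i,j})_{i<\omega,j<\omega}$ with $(c_{n,j})_{j<\omega}$ indiscernible over $(c_{i,j'})_{i<n,j'<\omega}$, preserving consistency of the leftmost column and the $2$-inconsistency property. Then build $(a_\eta)_{\eta\in 2^{<\omega}}$ by recursion on depth, placing a conjugate of the pair $(c_{n,0},c_{n,1})$ at each sibling pair $(\eta^\frown\langle 0\rangle,\eta^\frown\langle 1\rangle)$ of level $n+1$; using the stretched indiscernibility and the freedom to pick different $j$'s for different nodes, we can thread all $2^n$ sibling pairs through a single application of an automorphism of $\monster$ over the tree built so far.

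The main obstacle is verifying that the resulting tree really is an $\SOP_1$ witness, in particular that every branch (not just the leftmost) carries a consistent set of $\varphi'$-formulas. The key point is that each $a_{\nu^\frown\langle 1\rangle}$ must be seen as beginning a fresh $0$-column-style consistent continuation below it: by the conjugacy $c_{n,0}\equiv c_{n,1}$ over previous rows, there is an automorphism carrying the $0$-child subtree onto the $1$-child subtree while fixing the trunk, so the branches entering the $1$-subtree inherit consistency from branches of the $0$-subtree. The right-sibling inconsistency condition then follows from the fact that any descent below $\nu^\frown\langle 0\rangle$ runs through conjugates of $c_{m,0}$ values for $m>n$, which are inconsistent with $c_{n,1} = a_{\nu^\frown\langle 1\rangle}$ by translating the $2$-inconsistency of column $1$ back through the indiscernibility.
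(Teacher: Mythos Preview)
This statement is recorded in the paper as a \emph{Fact} with a citation to \cite[Proposition~2.4]{kaplan2017kim}; the paper does not supply its own proof, so there is nothing to compare your argument against here. That said, let me comment on the argument itself.

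Your forward direction is fine: extracting an s-indiscernible tree and reading off the two columns from $0^{n+1}$ and $(0^n)^\frown\langle 1\rangle$ is exactly the standard move, and your checks of consistency, $2$-inconsistency, and conjugacy are correct.

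The backward direction, however, has a real gap in the reduction to $k=2$. You claim that bundling $k-1$ consecutive rows and ``chaining the conjugacy condition through the block'' yields simultaneous conjugacy of the bundled $0$-tuple and the bundled $1$-tuple over earlier blocks. This does not follow. From $c_{m,0}\equiv_{(c_{i,j})_{i<m}} c_{m,1}$ you get, for each $m$ in the block, an automorphism $\sigma_m$ fixing the earlier rows and sending $c_{m,0}$ to $c_{m,1}$; but $\sigma_m$ need not fix the \emph{later} rows of the block, and there is no way to compose the $\sigma_m$ into a single automorphism taking $(c_{m,0})_{m\in\text{block}}$ to $(c_{m,1})_{m\in\text{block}}$. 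Concretely, for $k=3$ you would need $(c_{0,0},c_{1,0})\equiv(c_{0,1},c_{1,1})$, which is not a consequence of $c_{0,0}\equiv c_{0,1}$ together with $c_{1,0}\equiv_{c_{0,0}c_{0,1}} c_{1,1}$. The usual route around this is not to bundle at the array level but rather to build from the array a tree witnessing a ``$k$-inconsistent'' variant of $\SOP_1$, and then pass from that to genuine $\SOP_1$ by a separate combinatorial argument on trees (this is how the reduction is handled in \cite{kaplan2017kim}). Your later stretching step and tree construction are also underspecified (for instance, descendants of $\nu^\frown\langle 0\rangle$ need not all be conjugates of $c_{m,0}$; some are conjugates of $c_{m,1}$), though the inconsistency verification can be repaired using the conjugacy hypothesis once $k=2$ is in hand. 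The essential missing idea is a correct reduction from general $k$ to $k=2$.
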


\begin{prop}
If $T$ is $\NSOP_1$, then $T$ is $\NBTP$.
\end{prop}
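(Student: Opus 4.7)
The plan is to prove the contrapositive: if $\varphi(x;y)$ has $k$-$\BTP$, witnessed by a forest $(a_\eta)_{\eta\in \omega^{<\omega}_*}$, then $\varphi(x;y)$ has $\SOP_1$. I will produce an array $(c_{n,0}, c_{n,1})_{n<\omega}$ satisfying the hypotheses of Fact~\ref{fact:sop1}. First, by a standard tree-Ramsey argument (Scow's modeling theorem for the tree language with meet $\wedge$, lexicographic order $<_{lex}$, and level predicates), I pass to an s-indiscernible version of the forest. The $\BTP$ witnessing conditions are preserved because the properties ``every left-leaning path has consistent $\varphi$s'' and ``every right-veering path has $k$-inconsistent $\varphi$s'' amount to saying that certain finite configurations of nodes satisfy certain first-order conditions, and such conditions transfer to a tree locally based on the original.

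Next, let $\sigma_n$ be the string of $n+1$ ones and $\tau_n$ be the string of $n$ ones followed by a $0$, and set $c_{n,0} = a_{\sigma_n}$ and $c_{n,1} = a_{\tau_n}$. The sequence $(\sigma_n)_{n<\omega}$ is a descending chain, hence a left-leaning path, so $\{\varphi(x;c_{n,0}):n<\omega\}$ is consistent. The sequence $(\tau_n)_{n<\omega}$ is right-veering: at each step, $\tau_{n+1}$ lies above the string of $n$ ones followed by $\langle 1\rangle$, which is strictly to the right of $\tau_n$'s last entry $\langle 0\rangle$. Hence $\{\varphi(x;c_{n,1}):n<\omega\}$ is $k$-inconsistent.

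The key verification is that $\sigma_n$ and $\tau_n$ share the same quantifier-free tree type over $\{\sigma_i, \tau_i\}_{i<n}$, whence s-indiscernibility yields $c_{n,0}\equiv_{(c_{i,j})_{i<n,j<2}} c_{n,1}$. Both $\sigma_n$ and $\tau_n$ have depth $n+1$. For each $i<n$, $\sigma_i$ is a prefix of both $\sigma_n$ and $\tau_n$, hence a common ancestor. For each $i<n$, both $\sigma_n$ and $\tau_n$ meet $\tau_i$ at the string of $i$ ones, and at this meet both $\sigma_n$ and $\tau_n$ proceed through the child $\langle 1\rangle$ while $\tau_i$ proceeds through the child $\langle 0\rangle$. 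So meets, depths, and lex directions all match. Fact~\ref{fact:sop1} then gives $\SOP_1$.

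The main combinatorial point is finding this placement at all. The naive sibling choice $c_{n,0} = \mu_n^{\frown}\langle 0\rangle$, $c_{n,1} = \mu_n^{\frown}\langle 1\rangle$ (left child for consistency, right child for inconsistency) cannot work, because it would force $\mu_{n+1}$ to simultaneously extend $\mu_n^{\frown}\langle 0\rangle$ (for the descending left-leaning spine) and extend $\mu_n^{\frown}\langle j\rangle$ for some $j>1$ (for the right-veering siblings). Swapping the roles---letting $c_{n,0}$ run down the $1$-spine while $c_{n,1}$ sits as its left sibling at each level---resolves the tension.
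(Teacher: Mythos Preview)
Your argument is correct and reaches the same target (Fact~\ref{fact:sop1}) as the paper, but by a genuinely different mechanism. The paper restricts to the binary subtree $b_\eta = a_{\langle 0\rangle^\frown\eta}$, stretches it by compactness to height $\kappa>|S_y(T)|$, and then, working along the all-$1$'s spine, uses pigeonhole at each stage to find two nodes $b_{\eta_n}$, $b_{\nu_n}$ at the same level with the same type over the previously chosen pairs; the placement $\nu_n = \mu^\frown\langle 0\rangle$ with $\mu^\frown\langle 1\rangle\trianglelefteq\eta_n$ then forces the $k$-inconsistency via a right-veering path. You instead invoke the modeling property for str-indiscernible trees ($L_s = \{\trianglelefteq,\wedge,<_{lex},(P_n)\}$) to arrange s-indiscernibility up front, after which the explicit choice $\sigma_n = 1^{n+1}$, $\tau_n = 1^n{}^\frown\langle 0\rangle$ works because $\sigma_n$ and $\tau_n$ visibly have the same $L_s$-qf-type over $\{\sigma_i,\tau_i:i<n\}$. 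Your route is cleaner and avoids the cardinal stretching, at the cost of importing the (standard but nontrivial) modeling theorem; the paper's argument is more self-contained. One small point to tighten: the index set $\omega^{<\omega}_*$ is not meet-closed (e.g.\ $\tau_0\wedge\sigma_0=\langle\rangle$), so to apply the modeling theorem literally you should either add a dummy $a_{\langle\rangle}$ or shift all your nodes one level deeper so that every relevant meet stays in the forest.
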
 
\begin{proof}
Assume $\varphi(x;y)$ has $k$-$\BTP$, witnessed by $(a_\eta)_{\eta\in \omega^{<\omega}_*}$. Consider the binary sub-tree $(b_\eta)_{\eta\in 2^{<\omega}}$ with $b_\eta = a_{\langle0\rangle^\frown \eta}$. This tree does not witness $\SOP_1$, but it does have the following properties, which will be sufficient to obtain $\SOP_1$:
\begin{itemize}
\item For any $\rho\in 2^\omega$, $\{\varphi(x;b_{\rho|n}):n<\omega\}$ is consistent (since the corresponding sequence in our original forest is a left-leaning path). 
\item For any $\mu_1,\dots,\mu_k\in 2^{<\omega}$ such that $\mu_i^\frown\langle 1\rangle \trianglelefteq \mu_{i+1}$ for all $1\leq i<k$, $\{\varphi(x;b_{\mu_i^\frown\langle 0\rangle}):1\leq i \leq k\}$ is inconsistent (since the corresponding sequence in our original forest is a right-veering path of length $k$). 
\end{itemize}

By compactness, we can obtain a tree $(b_\eta)_{\eta\in 2^{<\kappa}}$, where $\kappa>|S_y(T)|$, which satisfies the obvious extensions of the two properties above. 

Following the proof of~\cite[Proposition 5.2]{ArtemNick}, we define $(\eta_i,\nu_i)_{i<\omega}$ in $2^{<\kappa}$ by recursion. Given $(\eta_i,\nu_i)_{i<n}$ (and setting $\eta_{-1} = \langle\rangle$ when $n = 0$), let $\mu_\alpha = {\eta_{n-1}}^\frown (1^\alpha)^\frown\langle 0\rangle$ for all $\alpha<\kappa$. Since $\kappa>|S_y(T)|$, there are $\alpha<\beta<\kappa$ such that $b_{\mu_\alpha}$ and $b_{\mu_\beta}$ have the same type over $(b_{\eta_i},b_{\nu_i})_{i<n}$. Let $\nu_n = \mu_\alpha$ and $\eta_n = \mu_\beta$. Directly from the construction, we have the following properties: 
\begin{enumerate}
\item $b_{\eta_n} \equiv_{(b_{\eta_i},b_{\nu_i})_{i<n}} b_{\nu_n}$ for all $n$. 
\item If $i < j$, then $\eta_i \vartriangleleft \eta_j, \nu_j$.
\item For all $i$, $(\eta_i\wedge \nu_i)^\frown \langle 1\rangle \trianglelefteq \eta_i$ and $(\eta_i\wedge \nu_i)^\frown \langle 0\rangle = \nu_i$. 
\end{enumerate}

Now, in the statement of Fact~\ref{fact:sop1}, set $c_{i,0} = b_{\eta_i}$ and $c_{i,1} = b_{\nu_i}$ for all $i<\omega$. We have $c_{n,0}\equiv_{(c_{i,j})_{i<n,j<2}} c_{n,1}$ by (1). Since $(\eta_i)_{i<\omega}$ is a chain in $2^{<\kappa}$ by (2), $\{\varphi(x;c_{i,0}):i<\omega\} = \{\varphi(x;b_{\eta_i}):i<\omega\}$ is consistent. And setting $\mu_i = (\eta_i\wedge \nu_i)$ for all $i$, note that by (2) and (3), $\nu_i = \mu_i^\frown \langle0\rangle$, and $\mu_i^\frown\langle 1\rangle \trianglelefteq \eta_i \trianglelefteq (\eta_j\wedge \nu_j) = \mu_j$ when $i<j$. So $\{\varphi(x;c_{i,1}):i<\omega\} = \{\varphi(x;b_{\mu_i^\frown\langle 0\rangle}):i<\omega\}$ is $k$-inconsistent. Thus $T$ has $\SOP_1$.
\end{proof}

\begin{prop}
If $T$ is $\NBTP$, then $T$ is $\NATP$. 
\end{prop}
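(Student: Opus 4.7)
I will argue the contrapositive: assuming $\varphi(x;y)$ has $\ATP$, witnessed by $(a_\eta)_{\eta\in 2^{<\omega}}$, I will build a $2$-$\BTP$ witness for $\varphi$ by re-indexing, setting $b_\eta = a_{f(\eta)}$ for an explicit injection $f\colon\omega^{<\omega}_*\to 2^{<\omega}$. The map $f$ will be chosen so that right-veering paths in $\omega^{<\omega}_*$ land on chains in $2^{<\omega}$ (whose $\ATP$-images are $2$-inconsistent by the clause on comparable pairs) and left-leaning paths land on antichains (whose $\ATP$-images are consistent).

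The encoding I propose is
\[ f(\langle i_1,\ldots,i_n\rangle) \;=\; 0^{i_1}\,1\,0^{i_2}\,1\,\cdots\,1\,0^{i_{n-1}}\,1\,0^{i_n+1}, \]
that is, $n$ blocks of $0$s (of sizes $i_1,\ldots,i_{n-1},i_n+1$) separated by single $1$s. This generalizes the comb $\sigma_{i,j}=0^i10^j$ familiar from the standard extraction of a $\TP_2$-array from an $\ATP$-witness, but with the crucial twist that the \emph{last} block is one longer than the corresponding coordinate of $\eta$.

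Writing $\mu\prec_R\nu$ for a single right-veering step ($\mu=\eta^\frown\langle i\rangle$, $\nu\trianglerighteq\eta^\frown\langle j\rangle$ with $j>i$) and $\mu\prec_L\nu$ for a single left-leaning step ($\nu\vartriangleright\eta^\frown\langle j\rangle$ with $j\leq i$), the bulk of the proof will consist of three verifications: (R) $\mu\prec_R\nu\Rightarrow f(\mu)\trianglelefteq f(\nu)$; (L) $\mu\prec_L\nu\Rightarrow f(\mu),f(\nu)$ are incomparable in $2^{<\omega}$; and (T) both $\prec_R$ and $\prec_L$ are transitive on $\omega^{<\omega}_*$ (a short case analysis on whether the parent of $\nu$ equals $\eta^\frown\langle j\rangle$ or strictly extends it). Granting (R), (L), and (T), iteration of (T) shows that in any right-veering (resp.\ left-leaning) path every pair of elements is in $\prec_R$ (resp.\ $\prec_L$); then (R) turns the $f$-image of a right-veering path into a chain in $2^{<\omega}$, which the $\ATP$ comparable-pair clause renders $2$-inconsistent, while (L) turns the $f$-image of a left-leaning path into an antichain, which the $\ATP$ antichain clause renders consistent. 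This will deliver $2$-$\BTP$ for $\varphi$.

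The main obstacle will be clause (L) in the sub-case $j=i$, when $\nu$ is a strict descendant of $\mu$ in $\omega^{<\omega}_*$: under the naive comb encoding, a strict descent would translate into a prefix relation in $2^{<\omega}$, the wrong direction. The role of the $+1$ on the final block is to force disagreement precisely here. After the shared prefix (which ends in a separator $1$), $f(\mu)$ continues as $0^{i+1}$ while $f(\nu)$ continues as $0^i\,1\cdots$, so at position $i$ past the shared prefix the two strings disagree ($0$ versus the separator $1$), yielding incomparability. The sub-case $j<i$ is analogous, with disagreement at position $j$; and clause (R) and the transitivity statements are straightforward after the same prefix calculation, using $j\geq i+1$ to see that the first $i+1$ characters of $f(\nu)$ after the shared prefix are all $0$s and so match the tail $0^{i+1}$ of $f(\mu)$.
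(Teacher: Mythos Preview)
Your proof is correct and follows essentially the same approach as the paper: both argue the contrapositive via an explicit injection $f\colon\omega^{<\omega}_*\to 2^{<\omega}$ sending left-leaning paths to antichains and right-veering paths to chains, then invoke the two $\ATP$ clauses. The only difference is the choice of encoding---the paper uses $f(\langle i_1,\ldots,i_n\rangle)=0\,1^{2i_1}\,0\,1^{2i_2}\cdots 0\,1^{2i_n}\,1$ (appending a final $1$ to force the antichain disagreement via the parity $2i+1\neq 2j$), whereas you use blocks of $0$s separated by $1$s with a ``+1'' on the final block to the same effect; your explicit isolation of transitivity for $\prec_L$ and $\prec_R$ is handled inline in the paper.
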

\begin{proof}
Assume $\varphi(x;y)$ has $\ATP$, witnessed by $(a_\eta)_{\eta\in 2^{<\omega}}$. 

Define a map $e\colon \omega^{<\omega}\to 2^{<\omega}$ by recursion on the length of the input sequence: \begin{align*}
e(\langle \rangle) &= \langle \rangle\\
e(\eta^\frown \langle i\rangle) &= e(\eta)^\frown \langle 0\rangle ^\frown (1^{2i}).
\end{align*}
Note that if $\eta\trianglelefteq \nu$, then $e(\eta)\trianglelefteq e(\nu)$. 

Now define $f\colon \omega^{<\omega}\to 2^{<\omega}$ by $f(\eta) = e(\eta)^\frown\langle1\rangle$, and consider the tree $(b_\eta)_{\eta\in \omega^{<\omega}_*}$ with $b_\eta = a_{f(\eta)}$.

If $(\lambda_n)_{n< \omega}$ is a left-leaning path, we claim that $\{f(\lambda_n):n<\omega\}$ is an antichain in $2^{<\omega}$, and hence $\{\varphi(x;b_{\lambda_n}):n<\omega\} = \{\varphi(x;a_{f(\lambda_n})):n<\omega\}$ is consistent. 

So fix $n<m$ in $\omega$. Writing $\lambda_n = \eta^\frown\langle i\rangle$, we have $\eta^\frown\langle j\rangle \vartriangleleft \lambda_{n+1}$ for some $j\leq i$. Now if $\eta^\frown\langle j\rangle \vartriangleleft \nu$, then also $\eta^\frown\langle j\rangle \vartriangleleft \nu'$ whenever $\nu'$ is a descendent of $\nu$ or a descendent of a leftward sibling of $\nu$. Since $(n+1)\leq m$, it follows that $\eta^\frown\langle j\rangle \vartriangleleft \lambda_m$. Let $j'< \omega$ be such that $\eta^\frown\langle j\rangle ^\frown \langle j'\rangle \trianglelefteq \lambda_m$.

Now $f(\lambda_n) = e(\eta^\frown\langle i\rangle)^\frown \langle 1\rangle = e(\eta)^\frown \langle 0\rangle ^\frown (1^{2i+1})$. On the other hand, $f(\lambda_m) = e(\lambda_m)^\frown \langle 1\rangle$ has as an initial segment $e(\eta^\frown\langle j\rangle ^\frown \langle j'\rangle) = e(\eta)^\frown\langle 0\rangle^\frown (1^{2j})^\frown \langle 0\rangle ^\frown (1^{2j'})$. Since $2i+1\neq 2j$, $f(\lambda_n) \perp f(\lambda_m)$, as desired. 

If $(\rho_n)_{n<\omega}$ is a right-veering path, we claim that $f(\rho_n) \trianglelefteq f(\rho_{n+1})$ for all $n< \omega$. From this, it follows that the values $\{f(\rho_n):n<\omega\}$ are pairwise comparable, and hence $\{\varphi(x;b_{\rho_n}):n<\omega\}  = \{\varphi(x;a_{f(\rho_n)}):n<\omega\}$ is $2$-inconsistent. 

So fix $n<\omega$. Writing $\rho_n = \eta^\frown\langle i\rangle$, we have $\eta^\frown \langle j\rangle \trianglelefteq \rho_{n+1}$ for some $j>i$. Now $f(\rho_n) = e(\eta^\frown\langle i\rangle)^\frown\langle 1\rangle = e(\eta)^\frown \langle 0\rangle^\frown (1^{2i+1})$. On the other hand, $f(\rho_{n+1}) = e(\rho_{n+1})^\frown \langle 1\rangle$ has as an initial segment $e(\eta^\frown \langle j\rangle) = e(\eta) ^\frown \langle 0\rangle^\frown (1^{2j})$. Since $2i+1<2j$, $f(\rho_n) \trianglelefteq f(\rho_{n+1})$, as desired.

Thus $\varphi(x;y)$ has $2$-$\BTP$. 
\end{proof}

\section{Questions} \label{sec:questions}

We have left open several natural directions for future work. In our view, the main problem is to find a syntactic characterization of the theories satisfying New Kim's Lemma. We have shown that $\NBTP$ implies New Kim's Lemma, but it is open whether this implication reverses. No implication in either direction is known between New Kim's Lemma and $\NATP$. In light of Hanson's preprint~\cite{hanson2023biinvariant}, we are also interested in the relationship between New Kim's Lemma and the property $\NCTP$ explored there. 

\begin{quest}\label{q:syntax}
Is New Kim's Lemma equivalent to one or more of the syntactic properties $\NATP$, $\NBTP$, or $\NCTP$?
\end{quest}

However, it is conceivable that there simply is no syntactic property that characterizes New Kim's Lemma. One way of making this precise is to recall the following very general definition, due to Shelah.

\begin{defn}[{\cite[Definition 5.17]{shelahdonotunderstand}}]\,
\begin{itemize}
\item For $n< \omega$, an \emph{$n$-code} (for a partial type) is a pair $A = (A_+,A_-)$ of disjoint subsets of $[n] = \{0,\dots,n-1\}$. Given a formula $\varphi(x;y)$ and tuples $a_0,\dots,a_{n-1}\in \monster^y$, the partial type coded by $A = (A_+,A_-)$ is \[q_A(x) = \{\varphi(x;a_i): i\in A_+\}\cup \{\lnot \varphi(x;a_i):i\in A_-\}.\] 

\item For $n< \omega$, an \emph{$n$-pattern} (of consistency and inconsistency) is a pair $(C,I)$ of disjoint sets of $n$-codes. A \emph{finite pattern} is an $n$-pattern for some $n< \omega$. We say that a formula $\varphi(x;y)$ \emph{exhibits} the $n$-pattern $(C,I)$ if there are tuples $a_0,\dots,a_{n-1}\in \monster^y$ such that for every code $A\in C$, $q_A(x)$ is consistent, and for every code $A\in I$, $q_A(x)$ is inconsistent. 

\item A property of formulas $P$ is \emph{definable by patterns} if there is a set $\mathcal{F}$ of finite patterns such that $\varphi(x;y)$ has property $P$ if and only if $\varphi(x;y)$ exhibits every pattern in $\mathcal{F}$. 

\item A property $Q$ of theories is \emph{definable by patterns}\footnote{Shelah calls a property of theories which is definable by patterns ``weakly simply high straight''. This is a special case of a related notion that Shelah calls ``straightly defined''.} if there is a property $P$ of formulas which is defined by patterns, and $T$ has property $Q$ if and only if there is some formula $\varphi(x;y)$ which has property $P$. 
\end{itemize}
\end{defn}

Each of the properties $\TP$, $\TP_1$, $\TP_2$, $\SOP_1$, $\ATP$, and $\BTP$ considered in this paper are definable by patterns: let $\mathcal{F}$ consist of one pattern for each finite subset of the infinite pattern of consistency and inconsistency defining the property, and apply compactness. 

\begin{quest} \label{straight def}
Is the class of theories in which New Kim's Lemma fails definable by patterns?
\end{quest}

It would be nice to have a larger stock of examples of theories satisfying New Kim's Lemma. To this end, we would like it to be easier to check that New Kim's Lemma holds, and to have more constructions for producing theories satisfying New Kim's Lemma. 

\begin{quest}\label{q:onevar}
Does it suffice to show that New Kim's Lemma holds for formulas in a single free variable to establish that it holds for all formulas?  
\end{quest}

The analogous fact is known for each of the properties $\NTP$, $\NTP_1$, $\NTP_2$, $\NSOP_1$, and $\NATP$: to prove that a theory has one of these properties, it suffices to check that no formula $\varphi(x;y)$ has the corresponding property, where $x$ is a single variable. These arguments typically push against the syntactic definition of the property, so it is hard to envision what a solution to this question might look like without first resolving Question \ref{straight def}. In light of this, it makes sense to ask Question~\ref{q:onevar} with New Kim's Lemma replaced by $\NBTP$. 

The theory $\DLO_p$ examined in Section~\ref{sec:dlop} is a special case of a general construction, developed in~\cite{ArtemNick}, for ``parametrizing'' arbitrary Fra\"iss\'e limits with disjoint amalgamation. As shown in~\cite[Corollary 6.3]{ArtemNick}, the parametrization of a Fra\"iss\'e limit with a simple theory is always $\NSOP_1$. It seems likely that the arguments in Section~\ref{sec:dlop} generalize to provide a positive answer to the following question.

\begin{quest}\label{q:pfc}
Suppose $\K$ is a Fra\"iss\'e class with disjoint amalgamation, and let $\K_{\pfc}$ be the parametrized version of $\K$, as defined in~\cite[Section 6.3]{ArtemNick}. Let $T$ and $T_\pfc$ be the theories of the Fra\"iss\'e limits of $\K$ and $\K_\pfc$, respectively. If $T$ satisfies New Kim's Lemma (or if $T$ is $\NTP_2$), does $T_\pfc$ satisfy New Kim's Lemma?
\end{quest}

There is a theme in the literature that ``generic constructions'' (i.e., those involving taking a model companion) often produce properly $\NSOP_1$ theories. For example, interpolative fusion, introduced in~\cite{interpolativefusions}, is a general method for ``generically putting together'' multiple theories over a common reduct. In~\cite{interpolativefusionsii}, Tran, Walsberg, and first-named author showed that the interpolative fusion of stable theories over a stable base theory is always $\NSOP_1$ (and, under mild hypotheses, the interpolative fusion of $\NSOP_1$ theories over a stable base theory is always $\NSOP_1$). 

If theories satisfying New Kim's Lemma are to generalize $\NSOP_1$ theories in an analogous way to how $\NTP_2$ theories generalize simple theories, and how $\NIP$ theories generalize stable theories, then the following seems like a reasonable conjecture. 

\begin{quest}\label{q:if}
Does the interpolative fusion of $\NIP$ theories over a stable base theory always satisfy New Kim's Lemma?
\end{quest}

Questions \ref{q:pfc} and \ref{q:if} are also meaningful with New Kim's Lemma replaced by $\NBTP$. 

Finally, since the Kim's Lemma surveyed in Section~\ref{sec:diversity} form the cornerstones of the theories of independence in simple, $\NSOP_1$, and $\NTP_2$ theories, one might hope that a satisfying theory of Kim-independence, generalizing the theory of $\ind^f$ in $\NTP_2$ theories and of $\ind^K$ in $\NSOP_1$ theories, could be developed on the basis of New Kim's Lemma. A natural first step would be the chain condition. 

\begin{defn}
We say $\ind^K$ satisfies the \emph{chain condition over models} if whenever $M\models T$, $a\ind^K_M b$, and $I = (b_i)_{i < \omega}$ is a Morley sequence for a global $M$-invariant type extending $\tp(b/M)$, there exists $a'$ such that $a'b_i \equiv_{M} ab$ for all $i< \omega$, $I$ is $Ma'$-indiscernible, and $a'\ind^K_M I$.
\end{defn}

\begin{quest}\label{q:chaincondition}
If $T$ satisfies New Kim's Lemma, does $\ind^K$ satisfy the chain condition over models? 
\end{quest}

One motivation for this question is that $\ind^f$ satisfies the chain condition over models in $\NTP_2$ theories, see~\cite[Theorem 2.9]{benyaacovindependence} (and the chain condition is the key step in the proof of the variant of the independence theorem for $\NTP_2$ theories in that paper). The proof of the chain condition in~\cite{benyaacovindependence} uses both the Kim's Lemma for $\NTP_2$ theories and the syntactic definition of $\NTP_2$. So here again, if Question~\ref{q:chaincondition} has a positive answer, it may be necessary to first resolve Question~\ref{straight def}. 

\bibliography{ms.bib}{}
\bibliographystyle{plain}

\end{document}